\newcommand{\Prob}[1]{\mathbb{P}\left[#1\right]}
\newcommand{\Exp}[1]{\mathbb{E}\left[#1\right]}
\newcommand{\ExpPN}[1]{\mathbb{E}^0_N\left[#1\right]}
\newcommand{\Expi}[1]{\mathbb{E}^0_i\left[#1\right]}
\newcommand{\Expj}[1]{\mathbb{E}^0_j\left[#1\right]}
\crefname{hypothesis}{Hypothesis}{Hypotheses}
\title{Replica-mean-field limits \\ for intensity-based neural networks}
\author{Fran\c{c}ois Baccelli \thanks{Department of Mathematics and Department of Electrical and Computer Engineering, University of Texas, Austin, TX 
  (\email{francois.baccelli@austin.utexas.edu}).}
\and Thibaud Taillefumier \thanks{Department of Mathematics and Department of Neuroscience, University of Texas, Austin, TX 
  (\email{ttaillef@austin.utexas.edu}).}
  }
\begin{document}

\maketitle

\begin{abstract}
Neural computations emerge from myriads of neuronal interactions occurring in intricate spiking networks.
Due to the inherent complexity of neural models, relating the spiking activity of a network to its structure requires simplifying assumptions, such as considering models in the thermodynamic mean-field limit.
In the thermodynamic mean-field limit, an infinite number of neurons interact via vanishingly small interactions, thereby erasing the finite size of interactions.
To better capture the finite-size effects of interactions, we propose to analyze the activity of neural networks in the replica-mean-field limit. 
Replica-mean-field models are made of infinitely many replicas which interact according to the same basic structure as that of the finite network of interest.
Here, we analytically characterize the stationary dynamics of an intensity-based neural network with spiking reset and heterogeneous excitatory synapses in the replica-mean-field limit. 
Specifically, we functionally characterize the stationary dynamics of these limit networks via ordinary differential equations derived from the Poisson Hypothesis of queuing theory.
We then reduce this functional characterization to a system of self-consistency equations specifying the stationary neuronal firing rates.
Of general applicability, our approach combines rate-conservation principles from point-process theory and analytical considerations from generating-function methods.
We validate our approach by demonstrating numerically that replica-mean-field models better capture the dynamics of neural networks with large, sparse connections than their thermodynamic counterparts.
Finally, we explain that improved performance by analyzing the neuronal rate-transfer functions, which saturate due to finite-size effects in the replica-mean-field limit.
\end{abstract}


%

\section{Introduction}
Intensity-based networks form a natural and flexible class of models for neural networks,
whose study has a long and successful history in computational neuroscience \cite{Bialek:1999,DayanAbbott,Emery:2004,Pillow:2008aa}.
In these models, the spiking activity of neural networks is represented in terms of point processes that are governed by neuronal ``stochastic intensities'' \cite{Daley1,Daley2}. 
Neuronal stochastic intensities model the instantaneous firing rate of a neuron as a function of the spiking inputs received from other neurons, thereby mediating network interactions and possibly carrying out local computations.
Detailed knowledge about intensity-based networks is mostly limited to simplifying limits such as the thermodynamic limit, i.e.,
with a very large number of neurons interacting very weakly \cite{Amari:1975aa,Amari:1977aa,Sompolinski:1988,Faugeras:2009}.  
Such an approximation, which neglects the finite-size of neuronal interactions, precludes explaining and predicting several aspects of neural computations,
including dynamical metastability \cite{Abeles:1995,Tognoli:2014aa},
correlation regime of activity \cite{Goris:2014aa,Lin:2015aa} and modulation of variability \cite{Churchland:2010aa,Ecker:2016}.
There is a crucial need for a computational framework allowing for the analysis of structured neural networks, while taking into account the finiteness of neuronal interactions.

Here, we develop a computational framework based on replica-mean-field (RMF) limits to address this need.
RMF limits were introduced in two distinct contexts: in statistical physics with applications to information-capacity
calculations in neuroscience \cite{Amit:1985,Mezard:1987aa,Gardner:1988}
and in computer networking to analyze communication networks in terms
of point processes \cite{Vvedenskaya:1996,RybShlosI,Benaim:121369,Baccelli:2018aa}.
We are concerned with the latter approach. 
Instead of considering the finite neural network of interest, this RMF approach considers 
closely related limit networks made of infinitely many replicas with the same basic neural structure.
The core motivation for considering RMF networks is that, under the assumption of independence between replicas,
referred to as the ``Poisson Hypothesis'' \cite{RybShlosI,RybShlosII}, these networks become analytically tractable.
In this work, we exploit the Poisson Hypothesis to characterize analytically the long-time
limit of a class of excitatory, intensity-based networks, called linear Galves-Loch\"erbach (LGL) models.
In considering this specific class of networks, our goal is to establish the foundation for the RMF
computational framework in a simple setting rather than aiming at generality.

For concreteness, let us introduce the RMF framework for a simple intensity-based network, 
namely the ``counting-neuron'' model. The counting-neuron model consists of a fully-connected network of $K$
exchangeable neurons with homogeneous synaptic weights $\mu$. 
For each neuron $i$, $1 \leq i \leq K$, the stochastic intensity $\lambda_i$ increases by $\mu>0$
upon reception of a spike and reset upon spiking to its base rate $b$.
Thus, its stochastic intensity is $\lambda_i(t)=b+\mu C_i(t)$, where $C_i(t)$ is
the number of spikes received at time $t$ since the last reset. 
Assuming the network state $\lbrace C_1(t), \dots , C_K(t) \rbrace$ has a well-defined stationary distribution,
a natural question is: how does the stationary firing rate $\beta= \Exp{\lambda_i}$ depend on $\mu$ and $K$? 
Strikingly, despite its simplicity, direct treatment of the model, e.g., via its master Kolmogorov equation, 
fails to yield an exact answer for non-degenerate $K$ and $\mu$ \cite{Ocker:2017aa}. 
This failure is primarily due to the presence of high-order correlations among subsets of neurons. 
Virtually all available results are obtained via a mean-field approximation in the thermodynamic limit, i.e., 
when letting $K \to \infty$ (large networks) and $\mu \to 0$ \cite{Baladron:2012aa} (vanishing interactions).
In this approximation, high-order correlations disappear at the cost of neglecting the finite-size
effects of neural constituents \cite{Touboul:2011}.

In principle, RMF models can incorporate correlations up to a given integer order $q$.
In this work, we only consider first-order replica models ($q=1$), which capture first-order statistics.
For the counting model and for an integer $M>0$, the $M$-replica model of first order consists of $M$ replicas,
each comprising $K$ counting neurons. Upon spiking, a neuron $i$ in replica $m$, indexed by $(i,m)$, delivers 
spikes with synaptic weight $\mu$ to the $K-1$ neurons $(j,v_j)$, $j\neq i$, where the replica destination $v_j$
is chosen uniformly at random. Thus, the probability for two replicas to interact over a finite period of time
vanishes in the limit $R \to \infty$, which intuitively justifies the Poisson Hypothesis.
The asymptotic independence between replicas makes a direct analytical treatment of the model possible,
just as in the traditional thermodynamic mean-field (TMF) limit. However, by contrast with the traditional
TMF limit, the stationary state explicitly depends on the finite-size parameters $K$ and $\mu$. 
We define the RMF model of the counting model as the replica network obtained in the limit of infinitely many replicas,
namely infinite $M$ but fixed and finite $K$.

The Poisson Hypothesis allows one to truncate the correlation terms due to neuronal interactions
in the functional characterization of the stationary state of a single replica.
For instance, in the counting neuron model, we will show that one can derive a single ordinary
differential equation (ODE) for $G$, the probability-generating function (PGF) of a neuron count $C$:
\begin{eqnarray}\label{eq:intro}
 \beta \!- \!\mu z G'(z) \!+\! \big(\beta (K\!-\!1)(z\!-\!1)\!-\!b \big)G(z) \!=\!0    \, .
\end{eqnarray} 
The truncation of the correlation terms comes at the cost of introducing the firing rate $\beta$
as an unknown parameter in \eqref{eq:intro}. As the ODE \eqref{eq:intro} is otherwise
analytically tractable, characterizing the RMF stationary state amounts to specifying the unknown
firing rate $\beta$. Then, the challenge of the RMF approach consists in specifying the unknown
firing rate via purely analytical considerations about a parametric system of ODEs.
For this model, we will show that in the RMF limit, the stationary firing rate $\beta$ is determined
as the unique solution of  
\begin{eqnarray}\label{eq:selfCons}
\beta = \frac{\mu c^a e^{-c}}{\gamma (a,c)} \quad \mathrm{with} \quad a = \frac{(K-1)\beta+b}{\mu} \quad \mathrm{and} \quad c = \frac{(K-1)\beta}{\mu} \, ,
\end{eqnarray}
where $\gamma$ denotes the lower incomplete Euler Gamma function.

Introduced for the counting-neuron model, the analytical strategy presented above is at the core of our general RMF approach.
In this work, we generalize this strategy to first-order replica networks with continuous
relaxation of the stochastic intensities and with general heterogeneous excitatory synaptic connections.
This generalization, which is stated in \Cref{th:relaxingNeuron}, is our main computational result.
While establishing this result, we develop a general methodology for the stationary analysis of RMF models, which we summarize below.
We also briefly discuss the relevance of applying the RMF limit to neural dynamics.\\


{\bf {Methodology.}} 
For clarity, we summarize the essential tenets of the methodology exposed herein.
Even under the simplest assumptions, there are no known analytical solutions to the Kolmogorov equations capturing the dynamics of intensity-based networks. 
Instead, one has to resort to analyzing caricatures of the dynamics
based on some simplifications of its correlation structure. 
The rate-conservation principle (RCP) of Palm calculus offers a systematic way of proposing such simplifications.
The Palm probability of a stationary point process can be interpreted as the distribution of
this point process conditioned to have a point present at the origin of the time axis.
The RCP consists in a conservation formula balancing the smooth drift of the stationary state
variables and their jumps at epochs of the point processes. In the RCP formula, jump terms
are expectations with respect to Palm probabilities, 
whereas the smooth dynamics leads to expectations with respect
to the stationary law of the system. Typically, the simplification then consists in replacing
these Palm expectations by stationary expectations, i.e., in ignoring the Palm bias. 
The resulting simplified functional equations can be solved in some fortunate cases. 
The key to such resolutions is to realize that our simplification of the RCP has a dynamical-system interpretation, which can be precisely formulated as a RMF limit. 
Indeed, the hallmark of RMF limit is to decouple network constituents, thereby cancelling out Palm biases.
This observation is instrumental in guaranteeing that there is at least one probabilistic, physical solution to our simplified functional equations.
Such solutions are found by imposing analyticity requirements that any probabilistic solution must satisfy. \\

{\bf {Applications.}} We do not intend to systematically investigate the applications 
of the RMF approach to neuroscience here, but rather, to highlight two key features of the RMF limit. 
First, we numerically simulate exemplars of recurrent and feedforward networks to compare the performance of RMF models and TMF models.
We show that TMF models outperform TMF models in predicting the neuronal spiking rates in LGL networks with strong, sparse synaptic interactions.
Second, we perform an asymptotic analysis of the neuronal rate-transfer functions, which are determined by the self-consistency equations in both the RMF and TMF limits.
This analysis shows that the RMF limit fundamentally differs from the classical TMF limit because increasing synaptic weights at fixed input rates leads to saturation---an aspect that cannot be captured by TMF models which consequently overestimate firing rates.
Further applications to neural-network analysis, such as higher-order models, are possible.
Beyond neuroscience, our methodology is also applicable to generic intensity-based stochastic network dynamics.
This suggests using the RMF framework to revisit classical problems in queuing theory, particle-system theory,
communication networks, population dynamics, epidemics, as well as 
completely new problems arising in, e.g., social network dynamics. \\

{\bf Structure.} 
In \Cref{sec:ppframework}, we introduce the point-process modeling framework for which we will develop RMF networks.
In \Cref{sec:pproach}, we characterize analytically the stationary state of RMF networks.
The neuroscience implications and the computational relevance of this approach are discussed in \Cref{sec:impli}.
\Cref{sec:proof} comprises the proofs supporting the results presented in \Cref{sec:ppframework} and in \Cref{sec:pproach}.
Future research directions are presented in Section \Cref{sec:future}, where we explain that
similar strategies apply for $i)$ any correlation orders $q$, namely with 
replica constituents being subsets of $q$ interacting neurons rather than single neurons, and $ii)$
for networks with heterogeneous synaptic weights supporting both excitation and inhibition. 
\\


{\bf Related work.} The inspiration for the replica models proposed in this work is rooted in the theory of nonlinear Markov processes, 
which were introduced by McKean \cite{McKean:1966}. These processes were extensively used to study mean-field
limits in queueing systems, initially by the Dobrushin school 
\cite{Vvedenskaya:1996,Rybko:2009aa,RybShlosI,RybShlosII}, and later by M. Bramson \cite{bramson:2011}.
This literature has two distinct components: 
$i)$ a probabilistic component proving asymptotic independence from the equations satisfied by the
non-linear Markov process, and $ii)$ a computational
component deriving closed-form expressions for the mean-field limit of the system of interest.
These two components jointly led to a wealth of new results in queueing theory, concerning
both open and closed queueing networks, e.g., \cite{Vvedenskaya:1996}. 
The aim of this work is to show that, just as in queueing theory, studying neural networks in the RMF
limit is computationally tractable.
Finding moment-generating functions by imposing condition of analyticity on some solutions is a classical approach in queueing theory \cite{Takacs:1962aa}.
The RCP simplification described in the methodology summary
were used to analyze point-process-based dynamics in peer-to-peer networks \cite{Baccelli:2017aa}
and in wireless networks \cite{SB17}.
However, the link established between RMF models and simplified RCP is novel.
Our approach also elaborates on prior attempts to solve the neural master equations in computational neuroscience.
Brunel {\it et al.} introduced mean-field limits for large neural networks with weak interactions 
from a computational perspective \cite{Abbott:1993,Brunel:1999aa, Brunel:2000aa}.
Touboul {\it et al.} then adapted the ideas of ``propagation of chaos'' for neural networks in the thermodynamic
mean-field limit \cite{Baladron:2012aa, Touboul:2012aa, Robert:2016}.
Their results were specialized to spiking models with memory resets by Galves and Locherb\"ach,
who also provided perfect algorithms to simulate the stationary states of infinite networks \cite{Galves:2013,DeMasi:2015}.
Closer to our approach, Buice, Cowan, and Chow adapted techniques from statistical physics to analyze
the hierarchy of moment equations obtained from the master equations \cite{Buice:2007,Buice:2009aa}.
These authors were able to truncate the hierarchy of moment equations to consider models
amenable to finite-size analysis via system-size or loop expansion around the mean-field solution \cite{Bressloff:2009aa}.
These authors also showed by field-theoretic arguments that the dynamics obtained by moment closure
was indeed that of a physical system.
Although the master equation of Buice {\it et al.} does not have a natural small parameter,
the moment-closure approach was implemented to solve the neural master equations in the thermodynamic limit \cite{Buice:2013}.
By contrast, our approach considers a new mean-field regime, that of the  RMF model for finite-size neural networks,
without any scaling of the interactions.\\


\section{Point-process framework for finite neural networks}\label{sec:ppframework}

In this section, we introduce the point-process modeling framework for which we will develop RMF networks.
In \Cref{sec:LGL}, we define the intensity-based neural networks that we consider throughout this work, i.e., 
linear Galves-L\"ocherbach (LGL) networks.
In \Cref{sec:MarkovAnalyis}, we justify that finite LGL networks admit a well-defined stationary regime 
with exponentially integrable neuronal stochastic intensities. In \Cref{sec:FunctionalAnalyis},
we derive functional equations characterizing the stationary joint distribution of the neuronal
stochastic intensities via the use of the RCP.  


\subsection{Linear Galves-L\"ocherbach models}\label{sec:LGL}

We consider a finite assembly of $K$ neurons whose spiking activity is modeled as the realization
of a system of simple point processes without common points $\bm{N} = \lbrace N_i \rbrace_{1\leq i \leq K}$
on $\mathbb{R}$ defined on an underlying measurable space $(\Omega, \mathcal{F})$.
For all neurons $1\leq i \leq K$, we denote by $\lbrace T_{i,n} \rbrace_{n \in \mathbb{Z}}$,
the sequence of successive spiking times with the convention that almost surely $T_{i,0} \leq 0 < T_{i,1}$ and $T_{i,n}<T_{i,n+1}$.
Each point process $N_i$ is a family $\lbrace N_i(B)\rbrace_{B \in \mathcal{B}(\mathbb{R})}$
of random variables with values in $\mathbb{N} \cup \lbrace \infty \rbrace$ 
indexed by the Borel $\sigma$-algebra $\mathcal{B}(\mathbb{R})$ of the real line $\mathbb{R}$.
Concretely, the random variable $N_i(B)$ counts the number of times that neuron $i$ spikes within 
the set $B$, i.e., $N_i(B) = \sum_{n \in \mathbb{Z}} \mathbbm{1}_B(T_{i,n})$. 
Setting the processes $N_i$, $1 \leq i \leq K$, to be independent Poisson processes
defines the simplest instance of our point-process framework as a collection of non-interacting neurons.

To model spike-triggered interactions within the network, we consider that the rate
of occurrences of future spikes depends on the spiking history of the network.
In other words, we allow the instantaneous firing rate of neuron $i$ to depend on the
times at which neuron $i$ and other neurons $j \neq i$ have spiked in the past.
Formally, the network spiking history $\lbrace \mathcal{F}_t \rbrace_{t \in \mathbb{R}}$
is defined as a non-decreasing family of $\sigma$-fields such that, for all $t$,
\begin{eqnarray}
 \mathcal{F}_t^{\bm{N}} = \left\{ \sigma\left( N_1(B_1), \ldots, N_K(B_K)\right) \, \vert \, B_i \in \mathcal{B}(\mathbb{R}) \, , \: B_i \subset (-\infty,t] \right\} \subset \mathcal{F}_t ,
\end{eqnarray}
where $\mathcal{F}_t^{\bm{N}}$ is the internal history of the spiking process $\bm{N}$.
The network spiking history $\lbrace \mathcal{F}_t \rbrace_{t \in \mathbb{R}}$ determines
the rate of occurrence of future spikes via the notion of stochastic intensity.
The stochastic intensity of neuron $i$, denoted by $\lbrace \lambda_i(t) \rbrace_{t \in \mathcal{R}}$,
can be seen as a function of $\lbrace \mathcal{F}_t \rbrace_{t \in \mathbb{R}}$
specifying the instantaneous firing rate of neuron $i$.
It is formally defined as the $\mathcal{F}_t$-predictable process $\lbrace \lambda_i(t) \rbrace_{t \in \mathcal{R}}$ satisfying 
\begin{eqnarray*}
\Exp{N_i(s,t] \, \vert \, \mathcal{F}_{s}}= \Exp{\int_s^t \lambda_i(s) \, ds\, \Big \vert \, \mathcal{F}_{s}} \, ,
\end{eqnarray*}
for all interval $(s,t]$ \cite{Jacod:1975}. 
Stochastic intensities generalize the notion of rate of events, or hazard function,
to account for generic history dependence beyond that of Poisson processes or renewal processes.

Specifying the history-dependence of the neuronal stochastic intensities entirely defines
a network model within the point-process framework. In this work, we consider models for which
the stochastic intensities $\lambda_1, \ldots, \lambda_K$ obey a system of coupled stochastic equations
\begin{eqnarray}\label{eq:dynmod}
\lefteqn{
\lambda_i(t) = \lambda_i(0) + \frac{1}{\tau_i} \int_0^t \left(b_i-\lambda_i(s) \right)\, ds +} \nonumber\\
&& \hspace{60pt} \sum_{j \neq i} \mu_{ij} \int_0^t N_j(ds) + \int_0^t \big(r_i-\lambda_i(s) \big) N_i(ds) \, ,
\end{eqnarray}
where the spiking processes $N_i$ have stochastic intensity $\lambda_i$. 
The above system of stochastic equations characterizes the history-dependence of the stochastic intensities.
The first integral term indicates that in between spiking events, $\lambda_i$ deterministically relaxes
toward its base rate $b_i>0$ with relaxation time $\tau_i$.
The second integral terms indicates that a spike from neuron $j \neq i$
causes $\lambda_i$ to jump by $\mu_{ij} \geq 0$, the strength of the synapse from $j$ to $i$.
Finally, the third integral term indicates that $\lambda_i$ resets to $0 \leq r_i \leq b_i$ upon spiking of neuron $i$. 
Taking $r_i < b_i$ models the refractory behavior of neurons whereby spike generation causes
the neuron to enter a transient quiescent phase. 

Thus-defined, our model can be seen as a system of coupled Hawkes processes with spike-triggered
memory reset and belongs to the Galves-L\"ocherbach class of models \cite{Galves:2013}.
Defining $T_{i,0}(t)$ to be the last spiking time before time $t$, i.e., 
$T_{i,0}(t) = T_{i,0} \circ \theta_t = \sup \lbrace s \leq t \vert N_s<N_t\rbrace$,
where $\theta_t$ is the time-shift operator, the stochastic intensity $\lambda_i(t)$ can be written under Galves-L\"ocherbach form 
\begin{eqnarray}
\lambda_i(t) = \phi_i\left( \sum_j \mu_{ij}\int_{T_{i,0}(t)}^t g_i(t-s) N_j(ds),t-T_0(t)\right)\, ,
\end{eqnarray}
with linear intensity functions $\phi_i$ and exponentially decaying kernels $g_i$:
\begin{eqnarray}
\phi_i(x,s)=x + b_i + (r_i-b_i)e^{-\frac{s}{\tau_i}} 
\quad \mathrm{and} \quad
g_i(t-s) = e^{-\frac{t-s}{\tau_i}} \, .
\end{eqnarray}
For this reason, we refer to our model as the linear Galves-L\"ocherbach model.
Galves-L\"ocherbach models have been primarily studied for infinite networks,
notably to characterize the mean-field dynamical limit \cite{DeMasi:2015,Delattre:2016}
or to construct perfect simulation algorithms \cite{Hodara:2017}.
Here, we focus on finite, excitatory assemblies of LGL neurons to approximate their 
dynamics via independent model akin to mean-field models but without taking any scaling limit.
That being said, we do not consider the proposed framework for its biological relevance
{\it per se} as we do not include important aspects of neural dynamics such as inhibition.
Our goal is rather to develop ideas amenable to generalization in a simple setting.


\subsection{Stationary Markovian dynamics} \label{sec:MarkovAnalyis}

In LGL networks, the stochastic intensity $\lambda_i(t)$ determines the instantaneous spiking rate
of neuron $i$ and can be viewed as the state of neuron $i$ at instant $t$.
When considered collectively, the stochastic intensities specify the network state
$\bm{\lambda}(t) = \left\{ \lambda_1(t), \ldots, \lambda_K(t)\right\}$ which follows
a continuous-time, pure-jump Markovian dynamics with infinitesimal generator   
\begin{eqnarray}\label{eq:infGen}
\mathcal{A}[f](\bm{\lambda}) = \sum_i \frac{b_i - \lambda_i}{\tau_i} \, \partial_{\lambda_i} f(\bm{\lambda})  + \sum_i \big( f(\bm{\lambda}+\bm{\mu}_i(\bm{\lambda})) - f(\bm{\lambda}) \big) \lambda_i \, ,
\end{eqnarray}
for all $f$ in $\mathcal{D}(\mathcal{A})$ the domain of $\mathcal{A}$.
In the above definition, the first sum collects the relaxation terms of the dynamics whereas the second sum corresponds to the interaction jumps triggered by the spiking of neuron $i$:
\begin{eqnarray}
\left[ \bm{\mu}_i (\bm{\lambda}) \right]_j = \left\{
\begin{array}{ccc}
\mu_{ji}  &  \mathrm{if} &  j \neq i \\
r_i - \lambda_i  &    \mathrm{if} &  j = i   
\end{array}
\right. \, .
\end{eqnarray}
Conditionally to the identity of the spiking neuron, the interaction jumps have fixed components set by the synaptic weights and a state-dependent component due to spiking reset.
The spiking reset to a history-independent state introduces a form of degeneracy which substantially hinders the analysis of the network dynamics, especially with respect to the regularity of the law of $\bm{\lambda}$.
In turn, for lack of a regularity characterization, it is unclear how to derive the Kolmogorov forward equation satisfied by $\bm{\lambda}(t)$ from the Kolmogorov backward equation $\partial_t u + \mathcal{A}[u] = 0$.

Despite these regularity complications, the stability of the network dynamics
can be established within the framework of Harris Markov chains \cite{MeynTweedie:2009}, 
whereby the continuous-time Markov chain $\{ {\bm \lambda}(t) \}_{t \in \mathbb{R}}$
proves to be Harris ergodic as long as $r_i>0$ for all neurons $i$. 
As the Markov chain $\{\bm  \lambda(t) \}_{t \in \mathbb{R}}$ is Harris ergodic (see the proof in \Cref{sec:Harris}), 
the network dynamics admits a unique invariant measure $p$ on $\mathbb{R}^K$ satisfying
\begin{eqnarray}\label{eq:invMeas}
\int_{\mathbb{R}^K} \mathcal{A}[f](\bm{\lambda}) \, p(d\bm{\lambda}) = 0 \; ,
\end{eqnarray}
for all $f$ in $\mathcal{D}(\mathcal{A})$. 
Sampling $\bm{\lambda}(0)$ according to the stationary measure $p$ defines
the stationary version of the Markov chain $\bm{\lambda}$, whose law $P$ is invariant under time shifts,
i.e., $P \circ \theta_t = P$ for all $t > 0$, and whose definition is naturally extended on the whole real line $\mathbb{R}$.
Coupling techniques using Nummelin splittings show that non-stationary dynamics converge
at least exponentially in total variation toward the stationary limit process \cite{Hodara:2018}.
The present work is only concerned with the stationary version of the network dynamics and, in the following,
the notation $\bm{\lambda}$ always refers to that stationary version.
Moreover, processes induced by $\bm{\lambda}$, such as the point processes $N_i$, inherit the stationary property.

We state the technical results justifying the existence of the stationary regime of the dynamics in \Cref{sec:proof}.
A key step is to check a Foster-Lyapunov drift condition in \Cref{th:FosterLyapunov} for the infinitesimal
generator $\mathcal{A}$ acting on exponential scale functions: $V_u({\bm \lambda}) = \exp{ \left(u \sum_i \lambda_i \right)}$,
where $u$ is an arbitrary real (see \Cref{sec:Harris}).
The satisfaction of this condition implies that the stationary measure $p$
is exponentially integrable \cite{MeynTweedie:1993}: for all $u>0$, we have 
\begin{eqnarray}\label{eq:expInt}
\Exp{V_u({\bm \lambda})} = \int_{\mathbb{R}^K}  e^{u\sum_i \lambda_i} \, p(d\bm{\lambda}) < \infty \, .
\end{eqnarray}
Exponential integrability implies the finiteness of the stationary moments of all orders.
Thus, within the context of finite LGL networks, the assumptions of bounded intensities
function $\phi_i$ is not required for the existence of stationary moments.

\begin{remark}
The regularity of the stationary measure of Galves-Locherbach networks has been studied in 
\cite{Locherbach:2018} under assumption of bounded intensity functions $\phi_i$ in $C^\infty(\mathbb{R})$.
In particular, a criterion is given for the stationary measure to admit a $C^k(\mathbb{R})$
density with respect to the Lebesgue measure on $\mathbb{R}$ for finite relaxation times $\inf_i \tau_i >0$.
\end{remark}


\subsection{Functional equation for generating functions}\label{sec:FunctionalAnalyis}

Within the stationary framework, it is natural to investigate the relation between low-dimensional
features of the dynamics, such as the moments of the invariant measure, and the structure of the network.
In particular, it would be highly desirable to express the individual mean spiking rates, i.e.,
the average intensities $\beta_i = \Exp{N_i((0,1])}$, in terms of the model parameters, namely
the time constants $\tau_i$, the base rates $b_i$, the reset values $r_i$, and most importantly, the synaptic weights $\mu_{ij}$.
However, direct analysis of the model via its infinitesimal generator does not provide any 
tractable characterization of the stationary moments $\beta_{n_1, \ldots, n_K}= \Exp{\lambda_1^{n_1} \ldots \lambda_1^{n_K}}$.
In fact, deriving equations for the moments $\beta_{n_1, \ldots, n_K}$ from the infinitesimal generator
would yield a non-closed hierarchy of equations, whereby equations characterizing moments of a given order
requires knowledge of moments of higher order \cite{Buice:2009aa,Ocker:2017aa}.

An alternative to such direct approaches consists in looking for equations satisfied by
functional transforms of $p$, such as the Laplace transform.
The reason for considering functional transforms is that at stationarity, one can exploit the
RCP \cite{BremaudBaccelli:2003} to exhibit a functional characterization of these transforms,
which can be solved by analytical methods for judiciously chosen functional transforms.
In practice, we find that the Laplace transform---or rather the moment-generating function (MGF)---of $p$
proves the most amenable for the analytical treatment of LGL networks.
By exponential integrability of the stationary distribution $p$ \eqref{eq:expInt}, the MGF of $p$ 
\begin{eqnarray}
\bm{u} = \lbrace u_1, \ldots , u_K \rbrace \mapsto L (\bm{u}) = \Exp{\exp{\left( \sum_{i=1}^K u_i \lambda_i \right)}} \, ,
\end{eqnarray}
is well-defined on all $\mathbb{R}_+^K$, and thus characterizes the probability distribution $p$.
In particular, the moments of $p$ can be derived from $L$ as
\begin{eqnarray}
m_{n_1, \ldots, n_K}= \Exp{\lambda_1^{n_1} \ldots \lambda_1^{n_K}}
=
 \frac{\partial^{\sum_i n_i} L}{ \prod_i \partial \lambda_i^{n_i}} \bigg \vert_{\bm{\lambda} = \bm{0}} \, .
\end{eqnarray}
The  MGF of the stationary distribution $p$ constitutes the functional transform of choice
for the analysis of LGL networks because it admits a simple characterization via RCPs:


\begin{proposition}\label{th:exactPDE}
The full $K$-dimensional MGF $L$ satisfies the first-order linear PDE
\begin{eqnarray}\label{eq:exactPDE}
\\
\left( \sum_i \frac{u_i b_i}{\tau_i}  \right) L - \sum_i \left( 1+\frac{u_i}{\tau_i} \right) \partial_{u_i} L
+
\sum_i e^{\left(u_i r_i + \sum_{j \neq i} u_j \mu_{ji}\right)} \partial_{u_i} L \Big \vert_{u_i=0} = 0 \, .
\nonumber
\end{eqnarray}
\end{proposition}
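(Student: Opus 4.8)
The plan is to evaluate the stationarity identity \eqref{eq:invMeas} on the one-parameter family of exponential test functions $f_{\bm u}(\bm{\lambda}) = \exp\left( \sum_i u_i \lambda_i \right)$ indexed by $\bm{u} \in \mathbb{R}_+^K$, for which $\Exp{f_{\bm u}} = L(\bm{u})$ by definition of the MGF. Equivalently, this is the rate-conservation principle \cite{BremaudBaccelli:2003} applied to the stationary real-valued process $f_{\bm u}(\bm{\lambda}(t))$: at stationarity the mean smooth drift of $f_{\bm u}$ between spikes must balance the mean sizes of its jumps at spiking epochs, and since each jump of neuron $i$ occurs at a Palm rate governed by the stochastic intensity $\lambda_i$, the Palm contribution $\beta_i \Expi{h}$ coincides with the stationary expectation $\Exp{\lambda_i h}$. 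Either route reduces to computing $\mathcal{A}[f_{\bm u}]$ with the generator \eqref{eq:infGen} and integrating against the invariant measure $p$.

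Computing the action of the generator is a direct calculation. Since $\partial_{\lambda_i} f_{\bm u} = u_i f_{\bm u}$, the relaxation part of \eqref{eq:infGen} contributes $\sum_i (u_i/\tau_i)(b_i - \lambda_i) f_{\bm u}$, whose stationary expectation is $\left( \sum_i u_i b_i / \tau_i \right) L - \sum_i (u_i/\tau_i)\partial_{u_i} L$. For the jump part, the displacement $\bm{\mu}_i(\bm{\lambda})$ sends the $j$-th coordinate ($j \neq i$) to $\lambda_j + \mu_{ji}$ and resets the $i$-th coordinate to $r_i$; hence $f_{\bm u}(\bm{\lambda} + \bm{\mu}_i(\bm{\lambda})) = e^{u_i r_i + \sum_{j \neq i} u_j \mu_{ji}} \exp\left( \sum_{j \neq i} u_j \lambda_j \right)$, where the prefactor is constant in $\bm{\lambda}$. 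The key observation is that taking stationary expectations converts $\lambda_i f_{\bm u}$ into $\partial_{u_i} L$ and converts $\lambda_i \exp\left( \sum_{j \neq i} u_j \lambda_j \right)$ into $\partial_{u_i} L |_{u_i = 0}$, because differentiating $L$ in $u_i$ brings down a factor $\lambda_i$ while the restriction $u_i = 0$ removes $\lambda_i$ from the exponent. Substituting these identifications into $\int \mathcal{A}[f_{\bm u}]\, dp = 0$ and collecting the relaxation drift, the $-f$ part of the jump, and the displaced term produces exactly the three groups of terms in \eqref{eq:exactPDE}, with the two $\partial_{u_i} L$ contributions combining into $-\sum_i (1 + u_i/\tau_i)\partial_{u_i} L$.

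The main obstacle, and the only genuinely technical point, is that $f_{\bm u}$ is unbounded and therefore does not manifestly lie in the domain $\mathcal{D}(\mathcal{A})$ for which \eqref{eq:invMeas} was asserted. I would resolve this by a truncation argument: apply the stationarity identity to the bounded cutoffs $f_{\bm u} \wedge n$ (or a smooth analogue) and then pass to the limit $n \to \infty$. Exponential integrability \eqref{eq:expInt} supplies the dominating integrable envelopes required for dominated convergence on both the drift and jump terms, since $|\mathcal{A}[f_{\bm u}]|$ is controlled by a finite linear combination of terms of the form $\lambda_i \exp\left( \sum_k u_k \lambda_k \right)$, each $p$-integrable on $\mathbb{R}_+^K$ by \eqref{eq:expInt}. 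The same exponential integrability legitimizes the interchange of differentiation and expectation used to write $\partial_{u_i} L = \Exp{\lambda_i f_{\bm u}}$ and its $u_i = 0$ specialization. Once these integrability justifications are secured, the identification of terms is immediate and the proposition follows.
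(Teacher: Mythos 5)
Your proposal is correct and follows essentially the same route as the paper: apply the generator \eqref{eq:infGen} to exponential test functions, take stationary expectations via \eqref{eq:invMeas} (the paper phrases this through Dynkin's formula plus stationarity, which is equivalent), and identify the resulting expectation terms as $L$, $\partial_{u_i} L$, and $\partial_{u_i} L \vert_{u_i=0}$. Your truncation and dominated-convergence argument handling the unboundedness of $f_{\bm u}$, justified by the exponential integrability \eqref{eq:expInt}, is a point of rigor that the paper passes over silently; it strengthens the same proof rather than constituting a different one.
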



\begin{proof}
Given a function $f$ in the domain $\mathcal{D}(A)$, the $\mathcal{F}_t$-predictable process defined by
\begin{eqnarray}
f({\bm \lambda}(t)) -\int_0^t \mathcal{A}[f]({\bm \lambda}(s)) \, ds
\end{eqnarray}
is a martingale.
By stationarity of $\{ \lambda(t) \}_{t \in \mathbb{R}}$,
we have $\Exp{f({\bm \lambda}(t))} = \Exp{f({\bm \lambda}(0))}$ and Dynkin's formula reads
\begin{eqnarray}
\Exp{\int_0^t \mathcal{A}[f]({\bm \lambda}(s)) \, ds}
=
\int_0^t \Exp{\mathcal{A}[f]({\bm \lambda}(s))} \, ds
= 0 \, .
\end{eqnarray}
Moreover, also by stationarity of $\{ \lambda(t) \}_{t \in \mathbb{R}}$,
the expectation in the integrand is constant, i.e., 
$\Exp{\mathcal{A}[f]({\bm \lambda}(s))} = \Exp{\mathcal{A}[f]({\bm \lambda})}$ with:
\begin{eqnarray}
\Exp{\mathcal{A}[f]({\bm \lambda})} 
=
\sum_i  \Exp{\frac{b_i - \lambda_i}{\tau_i} \, \partial_{\lambda_i} f(\bm{\lambda})
+  \big( f(\bm{\lambda}+\bm{\mu}_i(\bm{\lambda})) - f(\bm{\lambda}) \big) \lambda_i }
=
0 \, .
\end{eqnarray}
Specializing the above relation to exponential functions $f({\bm \lambda}) = e^{\sum_i u_i \lambda_i}$ yields
\begin{eqnarray}
\sum_{i}  \Exp{ \frac{b_i - \lambda_i}{\tau_i} \, u_i e^{\sum_j u_j \lambda_j}+  \left( e^{u_i r_i + \sum_{j \neq i} u_j (\lambda_j + \mu_{ji}) } -  e^{\sum_{j} u_j \lambda_j} \right) \lambda_i }
= 0 \, .
\end{eqnarray}
which can be written under the form
\begin{eqnarray}\label{eq:fullPDE}
\lefteqn{
\sum_{i}  
\frac{b_i u_i}{\tau_i} \Exp{ e^{\sum_j u_j \lambda_j}}
- \sum_i \left( 1+\frac{u_i}{\tau_i} \right) \Exp{ \lambda_i e^{\sum_j u_j \lambda_j}} } \nonumber\\
&& \hspace{120pt}+
\sum_i e^{\left(u_i r_i + \sum_{j \neq i} u_j \mu_{ji}\right)} \Exp{ \lambda_i e^{\sum_{j \neq i} u_j \lambda_j}}
= 0 \, .
\end{eqnarray}
Equation \eqref{eq:exactPDE} follows from recognizing the expectation terms as values
of the MGF $L$ and its partial derivatives $\partial_{\lambda_i} L$.
\end{proof}

Equation \eqref{eq:fullPDE} is a non-local first-order linear partial differential equation (PDE)
with boundary terms involving partial derivatives. Conceptually, this equation can be viewed
as depicting the stationary state of a $K$-dimensional transport equation in the negative orthant,
with linear drift $( 1+u_1/\tau_1, \ldots, 1+u_K/\tau_K)$, with linear death rate $\sum_i b_i u_i/\tau_i$,
and with non-local birth rate related to fluxes through the hyperplane $\{ \lambda_i = 0 \}$, $1 \leq i \leq K$.
Despite this conceptual simplicity, the presence of flux-related, non-local,
birth rate precludes one from solving \eqref{eq:fullPDE} except for the simplest cases, i.e., for $K \leq 2$.
To gain knowledge about the typical state of LGL networks in the stationary limit,
one has to resort to approximation schemes, such as moment-truncation methods,
which can yield unphysical solutions without probabilistic interpretations and
are often analytically intractable \cite{Doiron:2016aa}.
The purpose of the present work is to introduce a computational framework circumventing
the above difficulties by studying replica versions of the LGL networks of interest,
which admit stationary states that are both probabilistically well-posed and analytically tractable.


\section{The Replica-mean-field approach}\label{sec:pproach}

In this section, we propose to decipher the activity of LGL networks via 
limit networks made of infinitely many replicas with the same basic network structure.
In \Cref{sec:RMF}, we define the RMF limit for LGL networks and the associated RMF \emph{ansatz},
a system of ODEs characterizing their stationary regime.  
In \Cref{sec:Palm}, we show that in practice, the RMF \emph{ansatz} can be derived without explicit
reference to the replica framework via a computational tool, called Palm calculus. 
In \Cref{sec:Sol}, we reduce the RMF \emph{ansatz} to a set of self-consistency equations 
specifying the stationary neuronal stochastic intensities.


\subsection{Replica-mean-field models}\label{sec:RMF}

Replica models are first rigorously defined for a finite number of replica and admit similar,
albeit higher dimensional, functional characterization as plain LGL networks.
However, in the RMF limit, the Poisson Hypothesis allows one to truncate correlation terms
due to neuronal interaction, yielding a set of ODEs characterizing the RMF stationary state.


\subsubsection{Finite-replica models} 
\label{sec:firemo}

In order-one replica models, each replica consists of the same number of neurons as the original LGL networks,
denoted by $K$, and within each replica, neurons are labelled by a class index $1 \le i \le K$.
For a finite model with $M$ replicas, let $N_{m,i}$ denote the point process representing the
spiking activity of the neuron of class $i$ in replica $m$, referred to as neuron $(m,i)$.
Moreover, let $\lbrace\lambda_{m,i} \rbrace_{1\le m\le M, 1\le i \le K}$, denote the corresponding stochastic intensity.
Instead of interacting with neurons in the same replica upon spiking, neuron $(m,i)$ interacts with target neurons
of classes $j \neq i$ from independently and uniformly chosen replicas and with synaptic weight $\mu_{ij}$.
Thus, replica models consist in a caricature of the initial model where the interactions between neurons
are randomized while keeping the finite structure of the original network.
The finite replica dynamics can be specified via the introduction of stochastic processes
registering the sequence of neuronal interactions across replicas.
For all $1\le m\le M, 1\le i \le K$, let $\lbrace v_{m,ij}(t) \rbrace_{t \in \mathbb{R}}$
be stochastic processes such that for every spiking time $T$, i.e., for every point of $N_{m,i}$, the random variables
$\lbrace v_{m,ij}(T) \rbrace_j$ are independent of the past, mutually independent,
and uniformly distributed over $\lbrace 1,\ldots,M \rbrace \setminus \lbrace m \rbrace$. 
Concretely,  $v_{m,ij}$ indicates the index of the replica containing the neuron of class $i$ targeted by neuron $(m,j)$ upon spiking.
Then, the stochastic intensities $\{ \lambda_{m,i} \}_{1\le m\le M, 1\le i \le K}$
characterizing the $M$-replica dynamics of the finite LGL network obey the following system of coupled stochastic equations:
\begin{eqnarray}\label{eq:dynmod-rep}
\lambda_{m,i}(t) & = & \lambda_{m,i}(0) +
\frac{1}{\tau_i} \int_0^t \big(b_i-\lambda_{m,i}(s) \big)\, ds\nonumber \\
&+&  
\sum_{n \neq m} \sum_{j \neq i} \mu_{ij} \int_0^t  1_{\lbrace v_{n,ij}(s)=m \rbrace} N_{n,j}(ds) + \int_0^t \big(r_i-\lambda_{m,i}(s) \big) N_{m,i}(ds) \, .
\end{eqnarray}
These equations, which generalize \eqref{eq:dynmod}, 
entirely define the Markovian dynamics of finite replica models for LGL networks.
Similarly, the infinitesimal generator \eqref{eq:infGen} can be generalized to the finite replica setting.
To account for randomized interactions, let us introduce the $K$-dimensional stationary random vectors $\bm{v}_{m,i}$, defined by
$\left[ \bm{v}_{m,i} \right]_j = v_{m,ij}(T_{m,i,0})$ if $j \neq i$ and $\left[ \bm{v}_{m,i} \right]_i = m$,
taking values in the set of integers
\begin{eqnarray}
V_{m,i} = \Big \lbrace \bm{v} \in [1 \ldots M]^K \, \vert \, v_i= m \quad \mathrm{and} \quad v_j \neq m \, , j \neq i \Big \rbrace \, ,
\end{eqnarray}
whose cardinality is $\vert V_{m,i} \vert = (M-1)^{K-1}$. 
By definition, the collection of vectors $\bm{v}_{m,i}$, which indicates the target neurons of neuron $(m,i)$,
are identically and uniformly distributed on the sets $V_{m,i}$.
Consequently, the infinitesimal generator for the $M$-replica Markovian dynamics can be written as
\begin{eqnarray}\label{eq:infGen2}
\mathcal{A}[f_{\bm{u}}](\bm{\lambda}) &=& \sum_{i=1}^{K} \sum_{m=1}^{M} \left( \frac{b_i - \lambda_{m,i}}{\tau_i} \right) \partial_{\lambda_{m,i}} f_{\bm{u}}(\bm{\lambda})  \nonumber\\
&+& \sum_{i=1}^{K} \sum_{m=1}^{M}  \frac{1}{\vert V_{m,i} \vert} \sum_{\bm{v} \in V_{m,i}} \Big( f(\bm{\lambda} + \bm{\mu}_{m,i,\bm{v}}(\bm{\lambda})) -  f(\bm{\lambda} ) \Big) \lambda_{m,i} \, ,
\end{eqnarray}
where the update due to the spiking of neuron $(m,i)$ is defined by
\begin{eqnarray}
\Big[ \bm{\mu}_{m,i,\bm{v}}(\bm{\lambda}) \Big]_{j,n} = 
\left\{
\begin{array}{ccc}
 \mu_{ji} & \mathrm{if}  &  j \neq i \, , \: n=v_j \, , \\
r_i-\lambda_{m,i}  & \mathrm{if}   & j = i \, , \: n=v_j \, , \\
0  & \mathrm{otherwise  .}  &   
\end{array}
\right.
\end{eqnarray}
The arguments developed in \Cref{sec:MarkovAnalyis} for the Markovian analysis of plain LGL networks
naturally extend to finite replica models.
In particular, $M$-replica networks are Harris ergodic and admit a stationary distribution $p$. 
In turn, we can apply the RCP of \Cref{sec:FunctionalAnalyis} to the
stationary $M$-replica dynamics to obtain a functional characterization for the MGF of $p$:
\begin{eqnarray}
\bm{u} \mapsto L (\bm{u}) = \Exp{\exp{\left( \sum_{m=1}^M \sum_{i=1}^K u_i \lambda_{m,i} \right)}} \, .
\end{eqnarray}
Specifically, in \Cref{sec:funcRepModel},
we show the following result on the LGL networks defined in \Cref{sec:LGL}:


\begin{proposition}\label{th:exactPDErep}
For all LGL networks, the $M$-replica MGF $L$ satisfies the first-order linear PDE
\begin{eqnarray}\label{eq:fullPDErep}
\lefteqn{
\sum_m \sum_i\frac{b_i u_{m,i}}{\tau_i} L({\bm u})
- \sum_m \sum_i \left( 1+\frac{u_i}{\tau_i} \right) \partial_{\lambda_{m,i}} L({\bm u})  } \nonumber\\
&& \hspace{80pt}+
\sum_m \sum_i \frac{1}{\vert V_{m,i} \vert} \sum_{\bm{v} \in V_{m,i}} e^{\left(u_{m,i} r_i + \sum_{j \neq i} u_{v_j\!,j} \mu_{ji}\right)} L({\bm u})
= 0 \, .
\end{eqnarray}
\end{proposition}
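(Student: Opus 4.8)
The plan is to establish \eqref{eq:fullPDErep} by the same rate-conservation argument used for \Cref{th:exactPDE}, now carried out for the higher-dimensional replica generator \eqref{eq:infGen2}. First I would record that the Markovian analysis of \Cref{sec:MarkovAnalyis} transfers essentially verbatim to the finite $M$-replica dynamics: the exponential Foster--Lyapunov drift condition still holds, since the relaxation and reset mechanisms are unchanged and the randomized synaptic kicks are uniformly bounded, so the $M$-replica chain is Harris ergodic with an exponentially integrable stationary law. Consequently the replica MGF $L(\bm u) = \Exp{\exp(\sum_{m,i} u_{m,i}\lambda_{m,i})}$, together with all the partial derivatives appearing below, is finite on $\mathbb{R}_+^{MK}$. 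With this in hand, stationarity and Dynkin's formula give $\Exp{\mathcal{A}[f](\bm\lambda)} = 0$ for every $f \in \mathcal{D}(\mathcal{A})$, and I would specialize to the exponential test functions $f(\bm\lambda) = e^{\sum_{m,i} u_{m,i}\lambda_{m,i}}$.

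The two sums of \eqref{eq:infGen2} are then treated separately. The relaxation sum is immediate: since $\partial_{\lambda_{m,i}} f = u_{m,i} f$, taking expectations turns $\sum_{m,i}\tfrac{b_i-\lambda_{m,i}}{\tau_i}u_{m,i} f$ into $\sum_{m,i}\tfrac{u_{m,i}}{\tau_i}\big(b_i L - \partial_{u_{m,i}} L\big)$, which supplies the first term and the $u_i/\tau_i$ part of the middle term of \eqref{eq:fullPDErep}. The substantive step is the jump sum, where I would compute the effect of the reset-and-broadcast update $\bm\mu_{m,i,\bm v}$ on the exponential. A spike of neuron $(m,i)$ resets $\lambda_{m,i}$ to $r_i$ and adds $\mu_{ji}$ to each coordinate $\lambda_{v_j,j}$, $j\neq i$, where $v_j$ is the uniformly chosen target replica; tracking these index shifts yields the identity
\[
f\big(\bm\lambda + \bm\mu_{m,i,\bm v}(\bm\lambda)\big) = \exp\!\Big(u_{m,i} r_i + \sum_{j\neq i} u_{v_j,j}\,\mu_{ji}\Big)\, e^{-u_{m,i}\lambda_{m,i}}\, f(\bm\lambda) \, ,
\]
which is the replica analogue of the single-neuron computation underlying \Cref{th:exactPDE}.

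Finally I would take expectations of the jump sum and read off \eqref{eq:fullPDErep}. The $-f(\bm\lambda)\lambda_{m,i}$ contribution integrates to $-\partial_{u_{m,i}} L$, completing the middle term. For the broadcast contribution, the prefactor $\exp(u_{m,i}r_i + \sum_{j\neq i} u_{v_j,j}\mu_{ji})$ pulls out of the expectation, leaving $\Exp{\lambda_{m,i}\,e^{\sum_{(n,k)\neq(m,i)} u_{n,k}\lambda_{n,k}}} = \partial_{u_{m,i}} L\big\vert_{u_{m,i}=0}$, i.e. the reduced boundary derivative entering the exponential birth term of \eqref{eq:fullPDErep} exactly as the third term of the single-network form \eqref{eq:exactPDE}. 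The observation that makes the average $\tfrac{1}{|V_{m,i}|}\sum_{\bm v\in V_{m,i}}$ collapse cleanly is that this reduced expectation does \emph{not} depend on $\bm v$: the random targets enter only through the indices of the prefactor, so the average acts solely on $\exp(u_{m,i}r_i + \sum_{j\neq i} u_{v_j,j}\mu_{ji})$. I expect the main obstacle to be combinatorial bookkeeping rather than analysis: one must verify carefully that the randomized synaptic routing contributes to the generator only through the uniform average over $V_{m,i}$, and that, after conditioning on the identity of the spiking neuron, the reset coordinate and the broadcast coordinates factor out of the stationary expectation so that the $\bm v$-average decouples from $L$ and its boundary derivatives.
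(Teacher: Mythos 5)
Your proposal is correct and follows essentially the same route as the paper: Dynkin's formula at stationarity applied to exponential test functions, an explicit computation of the replica generator's action on these exponentials, and identification of the resulting expectations with $L(\bm{u})$, its partial derivatives, and the reduced boundary derivatives $\partial_{u_{m,i}}L(\bm{u})\big\vert_{u_{m,i}=0}$. The only difference is that the paper carries out the computation for an arbitrary subset $S$ of replica indices (so the same relation can be reused with $S=\lbrace 1 \rbrace$ to derive the RMF \emph{ansatz}) and then specializes to $S=\lbrace 1,\ldots,M\rbrace$, whereas you work with the full set directly; your reading of the birth term as involving the boundary derivative rather than $L(\bm{u})$ itself is indeed the intended one.
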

The above characterization of replica networks is not simpler than that of plain LGL networks.
However, the expression of the infinitesimal generator \eqref{eq:infGen2} shows that randomized interactions effectively implement an averaging over replicas.
In the limit of a large number of replicas $M \to \infty$, one expects such an averaging to erase
the dependence structure of spiking interactions, and to yield independence between replicas.
Numerical simulations support such a mean-field behavior, which is conceptually similar to that of the
thermodynamic limit, i.e., with $K \to \infty$ and vanishing interactions scaling as $1/K$,
but retains important features of the finite network structure.
Intuitively, independence between two replicas emerges from the so-called
``Poisson Hypothesis'' \cite{RybShlosI,RybShlosII}:
Over a finite period of time, the probability for a particular neuron to receive
a spike from another given neuron scales as $1/M$.
Thus, as the number of replicas increases, interactions between distinct replicas become ever scarcer, leading to replica independence.  
By the same intuition, we expect spiking deliveries to distinct replicas to be asymptotically
distributed as independent Poisson point processes, which is precisely the Poisson Hypothesis.
Proving the validity of the Poisson Hypothesis requires to establish the property of propagation of chaos
\cite{Sznitman:1989} in the limit of an infinite number of replicas $M \to \infty$. 
This is beyond the aims of our analyis.
Here, we conjecture that the Poisson Hypothesis holds in the limit $M \to \infty$,
and our goal is to develop the computational framework for the analysis of infinite-replica LGL networks,
which we refer to as RMF models.


\subsubsection{The replica-mean-field \emph{ansatz}}\label{sec:RMFA}
Under the Poisson Hypothesis, neurons from distinct replicas of an RMF network spike independently.
Here, we show that this assumption of independence leads to a simple functional characterization
of the MGF of a single replica, which we call the RMF \emph{ansatz}.
Consider for instance the MGF associated to the first replica:
\begin{eqnarray}
\bm{u} \mapsto L (\bm{u}) = \Exp{\exp{\left(\sum_{i=1}^K u_{i,1} \lambda_{1,i} \right)}} \, .
\end{eqnarray}
Denoting $u_{i,1}=u_1$ and  $ \lambda_i = \lambda_{1,i}$ for conciseness, 
the RCP for the $M$-replica network applied
to $f(\bm{u}) = e^{\sum_{i=1}^K u_i \lambda_i}$ (see \Cref{sec:funcRepModel}) yields
\begin{eqnarray}\label{eq:fullPDErep2}
\lefteqn{
 \sum_{i=1}^{K}  \left( \frac{b_i u_{i}}{\tau_i} L(\bm{u}) -  \frac{u_{i}}{\tau_i}  \partial_{u_{i}}L(\bm{u}) \right) + \sum_{i=1}^{K}   \left( e^{u_{i}r_i}- 1 \right)  \partial_{u_{i}}L(\bm{u}) \big \vert_{u_i=0} } \nonumber\\
&& \hspace{40pt}+ \sum_{i=1}^{K} \sum_{m >1}  \frac{1}{\vert V_{m,i} \vert} \sum_{\bm{v} \in V_{m,i}}\left( e^{\left(\sum_{j \neq i, v_j =1} u_{j} \mu_{ji} \right)} - 1 \right) \Exp{\lambda_{m,i} e^{\sum_{i=1}^K u_i \lambda_i}} = 0\, .
\end{eqnarray}
The above equation would constitute an autonomous ODE for $L(u)$,
were it not for the interactions with replicas $M>1$, as mediated by the last term of  \eqref{eq:fullPDErep2}.
The independence assumption of the Poisson Hypothesis allows us to close \eqref{eq:fullPDErep2}
in the limit of an infinite number of replica $M \to \infty$.
The first step in this direction is to observe that in the limit $M \to \infty$, 
only certain vectors $\bm{v}$ contribute meaningfully to the interaction terms:
these are those vectors representing spike deliveries from a neuron $(m,j)$, $m>1$,
such that only one spike is delivered to the first replica.
In fact, we elaborate on this observation in \Cref{sec:funcRepModel} to show that 
\begin{eqnarray}\label{eq:prepAnsatz}
\lefteqn{
\sum_{i=1}^{K} \sum_{m >1}  \frac{1}{\vert V_{m,i} \vert} \sum_{\bm{v} \in V_{m,i}}\left( e^{\left(\sum_{j \neq i, v_j =1}u_{j} \mu_{ji} \right)} - 1 \right) \Exp{\lambda_{m,i} e^{\sum_{i=1}^K u_i \lambda_i}}  = }\nonumber\\
&& \hspace{60pt}\sum_{i=1}^{K} \sum_{j \neq i}  \left( e^{u_{j} \mu_{ji}} -1 \right) \frac{1}{M-1} \sum_{m >1}\Exp{\lambda_{m,i} e^{\sum_{i=1}^K u_i \lambda_i}} + o(1/M)\, .
\end{eqnarray}
By exchangeability of the replicas, all expectation terms in the right-hand side above are equal.
Moreover, neurons of the same class have identical mean intensities: $\beta_i = \Exp{\lambda_{m,i}}$.
Exploiting the assumption of independence from the Poisson Hypothesis, we thus have
\begin{eqnarray}
\Exp{\lambda_{m,i} e^{\sum_{i=1}^K u_i \lambda_i}} = \Exp{\lambda_{m,i}}  \Exp{e^{\sum_{i=1}^K u_i \lambda_i}} = \beta_i L(\bm{u}) \, .
\end{eqnarray}
Using the fact that we also have $\beta_i = \partial_{u_i} L (\bm{u})  \vert_{u_i=0}$,
we can write  \eqref{eq:fullPDErep2} as
\begin{eqnarray}\label{eq:fullPDErep3}
\lefteqn{  \sum_{i=1}^K -\frac{u_i}{\tau_i}  \partial_{u_i}L(\bm{u}) +  \sum_{i=1}^K \left( \frac{u_i b_i}{\tau_i} + \sum_{j \neq i}\left( e^{u_i \mu_{ij}}-1\right) \beta_j \right) L(\bm{u}) \: +} \nonumber\\
&& \hspace{200pt}   \left(e^{u_i r_i} -1 \right) \partial_{u_{i}}L(\bm{u}) \big \vert_{u_i=0}= 0 \, .
\end{eqnarray}
The above equation is separable. In keeping with the assumption of independence,
plugging in the product form $L(\bm{u}) = \prod_{i=1}^K L_i(u_i)$ with $L_i(u_i) = \Exp{e^{u_i \lambda_i}}$ 
and $\beta_i = L_i'(0)$, yields the final form of the RMF \emph{ansatz}:


\begin{definition}\label{def:relaxingAnsatz}
The RMF \emph{ansatz} for the LGL network of $K$ neurons specified by the interaction weights
$\mu_{ij}$, the relaxation times $\tau_i$, the base rates $b_i$, and by the reset values $r_i$, $1 \leq i \leq K$,
is defined as the system of coupled ODEs:
\begin{eqnarray}\label{eq:sysLaplace}
-\left( 1+ \frac{u}{\tau_i}\right)  L_i'(u) +  \left( \frac{ub_i}{\tau_i}  +\sum_{j \neq i}\left( e^{u \mu_{ij}}-1\right) \beta_j \right) L_i(u) +  \beta_i e^{u r_i} = 0 \, .
\end{eqnarray}
\end{definition}


Notice that setting $u \to 0$ in \eqref{eq:sysLaplace} automatically yields $L_i'(0)= \beta_i$.
Thus, at the cost of introducing the mean firing rates $\bm{\beta} = \lbrace \beta_1, \ldots, \beta_K\rbrace$,
the Poisson Hypothesis allows us to write a closed set of ODEs for the 
one-dimensional MGF $L_i$, should the RMF \emph{ansatz} be true.
However, in the RMF \emph{ansatz}, the mean firing rates $\bm{\beta}$ are 
unknown parameters, and the MGF normalization condition, $L_i(0)=1$, does not dispel this indetermination.
More generally, there is \emph{a priori} no reason for the RMF \emph{ansatz} to admit a MGF as a solution.
In the following, we show that for the RMF \emph{ansatz} to admit a MGF solution, 
$\bm{\beta}$ needs to solve a set of self-consistency equations.

We will first account for this result in the special case of the 
counting-neuron model, i.e., for a fully connected network with
homogeneous synaptic weights and without relaxation:  $\mu_{ij} = \mu$ and $\tau_i \to \infty$.
For the counting-neuron model, it is best to work with the probability-generating function (PGF)
associated to the counting vector $\bm{C}=\lbrace C_i, \ldots, C_n\rbrace$:
\begin{eqnarray}
\bm{z} \in [0,1]^K \mapsto G (\bm{z}) = \Exp{\prod_{i=1}^K  z_i^{C_i}} = L (\ln z_{i_1}, \ldots, \ln z_{i_K})\, ,
\end{eqnarray}
rather than with the actual MGF of $\bm{C}$, still denoted by $L$.
Specifically, we have:


\begin{definition}\label{def:countingAnsatz}
The RMF \emph{ansatz} for the network of $K$ node counting neuron network
specified by the interaction weight $\mu$, and the reset values $r$, $1 \leq i \leq K$,
is defined as the ODE:
\begin{eqnarray}\label{eq:firstOrder}
\beta - \mu z G'(z) + \big( \beta (K-1)(z-1) -r\big) G(z) = 0 \, .
\end{eqnarray} 
\end{definition}


Before proceeding to the reduction of the RMF \emph{ansatz} to a set of self-consistency 
equations for $\bm{\beta}$, we show that the RMF \emph{ansatz} can be obtained without
any explicit reference to replica models.
In doing so, our aim is to show that the RMF \emph{ansatz} can be established
intuitively via independence assumptions, and without in-depth probabilistic analysis.


\subsection{Functional equations via Palm calculus}\label{sec:Palm}

The derivation of the RMF \emph{ansatz} relies on a computational tool
from the theory of point processes, called Palm calculus \cite{Mathes:1964,Mecke:1967}.

\subsubsection{Primer on Palm calculus}

Palm calculus treats stationary point processes from the point of view of a typical point, i.e.,
a typical spike, rather than from the point of view of a typical time, i.e., in between spikes.
Here, we only introduce Palm calculus via the two formulae that play a key role in deriving
the RMF {\it ansatz} \cite{BremaudBaccelli:2003}.
With no loss of generality, consider a stationary point process $N_i$ defined on some probability space
$(\Omega, \mathcal{F},\mathbb P)$, representing the spiking activity of a neuron.
If $\{\theta_t\}$ is a time shift on $(\Omega,{\mathcal F})$ which preserves $\mathbb P$, 
we say that the stationary point process $N$ is $\theta_t$-compatible in the sense that $N(B)\circ \theta_t=N(B+t)$ for all 
$B$ in $\mathcal{B}(\mathbb{R})$ and $t\in \mathbb R$.
With this notation, the Palm probability of $N$, which gives the point of view of a ``typical'' point on $N$,
is defined on $(\Omega, \mathcal{F})$ for all event $A$ in $\mathcal{F}$ and for all time $t>0$ by
\begin{eqnarray}\label{eq:Palm}
\hspace{15pt}
{\mathbb P}^0_N(A) = \frac{1}{\beta t} \Exp{\sum_{n \in \mathbb{Z}} \mathbbm{1}_A(\theta_{T_{n}})  \mathbbm{1}_{(0,t]}(T_{n})}
= \frac{1}{\beta t} \Exp{\int_{(0,t]} \left( 1_A \circ \theta_s \right) N(ds)} \, ,
\end{eqnarray}
where $\beta = \Exp{N((0,1])}$.
Informally, ${\mathbb P}^0_N(A)$ represents the conditional probability
that a train of spikes falls into $A$ knowing that a spike happens at $t=0$.
Moreover, suppose that $N$ admits a stochastic intensity $\lambda_i$,
representing the instantaneous firing rate, and set $A= \{ \lambda(0) \in B\}$ for some $B$ in $\mathcal{B}(\mathbb{R}_+)$, then
\begin{eqnarray}
{\mathbb P}^0_N(A)  = {\mathbb P}^0_N \left[ \lambda (0_-) \in B  \right] = \mathbb P \left[ \lambda(0_-) \in B \, \vert \, N(\{0\})=1 \right] \, 
\end{eqnarray}
specifies the stationary law of the stochastic intensity $\lambda_i$ just before spiking.

The notions of Palm probability and stochastic intensity provide the basis for the theory of Palm calculus.
Let us consider another non-negative stochastic process $X$ defined on the same underlying probability 
space $(\Omega, \mathcal{F})$ as that of $N$.
If $X$ is also $\theta_t$-compatible in the sense that $X(s)\circ \theta_t=X(s+t)$ for all $t,s\in \mathbb R$,
then the first key formula Palm calculus directly follows from the definition \eqref{eq:Palm} and reads
\begin{eqnarray}\label{eq:Palm1}
\ExpPN{X(0_-)}
=
\frac{1}{\beta t} \Exp{\int_0^t X (s) N(ds)} \, ,
\end{eqnarray}
where $\ExpPN{ \cdot}$ denotes the expectation with respect to ${\mathbb P}^0_N$.
In the following, the process $X$ intervening in the above expression will typically
be a function of the stochastic intensity of a neuron.
The second key formula, which follows from the Papangelou theorem,
relates Palm probabilities to the underlying probability via the notion of stochastic intensity \cite{BremaudBaccelli:2003}.
Specifically, if $N$ admits a stochastic intensity $\lambda$ and $X$ has appropriate
predictability properties, then for all real valued functions $f$ we have:
\begin{eqnarray}\label{eq:Palm2}
\Exp{f(X) \lambda_i}
=
\beta \ExpPN{f\big(X(0_-)\big)}  \, .
\end{eqnarray}
The formulae \eqref{eq:Palm1} and \eqref{eq:Palm2} will be the only results required to
establish rate-conservation equations via Palm calculus.


\subsubsection{Rate-conservation equations}

Because interactions are temporally localized at spiking times,
Palm calculus is a convenient tool to express rate-conservation equations in LGN networks.
In fact, Palm calculus allows one to obtain rate-con\-servation equations intuitively from the
stochastic equations describing the evolution of the conserved quantity. 
For our purpose of recovering the RMF \emph{ansatz} from \Cref{def:relaxingAnsatz},
that conserved quantity is $e^{u \lambda_i}$, where $u$ is some fixed real and where
$\lambda_i$ is the stochastic intensity of neuron $i$, $0 \leq i \leq K$.
By $\mathcal{F}_t$-predictability and stationarity of the network dynamics $\bm{\lambda}_t$,
for all real $u$, the process $\lbrace e^{u \lambda_i(t)} \rbrace_{t \in \mathbb{R}}$
is also a $\mathcal{F}_t$-predictable stationary process. Moreover, this process satisfies the stochastic equation
\begin{eqnarray}\label{eq:relaxStoch}
\lefteqn{
e^{u \lambda_i(t)} = e^{u \lambda_i(0)} + \frac{u}{\tau_i} \int_0^t \big( b_i - \lambda_i(s)\big) e^{u \lambda_i(s)} \, ds } \nonumber\\
&&\hspace{40pt}+ \sum_{j \neq i} \left( e^{u \mu_{ij}}-1\right) \int_0^t e^{u \lambda_i(s)} N_j(ds) + \int_0^t \left(   e^{u r_i} - e^{u \lambda_i(s)} \right) N_i(ds) \, ,
\end{eqnarray}
where the $N_i$, $0 \leq i \leq K$, are $\mathcal{F}_t-$predictable counting processes with stochastic intensity $\lambda_i$.
In  \eqref{eq:relaxStoch}, the first integral term is due to relaxation toward base rate $b_i$,
the second integral term is due to interaction with spiking neurons $j \neq i$,
and the last term is due to post-spiking regeneration of neuron $i$ at reset value $r_i$.
Taking the expectation of \eqref{eq:relaxStoch} with respect to the stationary
measure of $\bm{\lambda}$ yields the rate-conservation equations of $\lbrace e^{u\lambda_i(t)} \rbrace_{t \in \mathbb{R}}$:
\begin{eqnarray}
\lefteqn{
\frac{u}{\tau_i}\Exp{\int_0^t \big( b_i - \lambda_i(s)\big) e^{u \lambda_i(s)} \, ds }  } \nonumber\\
&&  \hspace{30pt} + \sum_{j \neq i} \left( e^{u \mu_{ij}}-1\right) \Exp{\int_0^t e^{u \lambda_i(s)} N_j(ds) } +  \Exp{ \int_0^t \left(   e^{u r_i} - e^{u \lambda_i(s)} \right)  N_i(ds)} = 0 \, , 
\end{eqnarray}
where we have used that by stationarity, we have $\Exp{e^{u \lambda_i(t)}} = \Exp{e^{u \lambda_i(0)}}=\Exp{e^{u \lambda_i}}$.
Again, by stationarity, the expectation of the relaxation integral term can be expressed as 
\begin{eqnarray}
\Exp{\int_0^t \big( b_i - \lambda_i(s)\big) e^{u \lambda_i(s)} \, ds } =  t \Exp{ (b_i-\lambda_i) e^{u \lambda_i}} \, ,
\end{eqnarray}
where $\beta_i = \Exp{\lambda_i}= \Exp{N_i((0,1])}$ is the mean intensity of $N_i$.
In turn, introducing the Palm distribution ${\mathbb P}^0_i$ of the process $\bm{\lambda}$
with respect to $N_i$ allows us to write the expectations of the remaining interaction and
reset integral terms as expectations with respect to Palm distributions ${\mathbb P}^0_i$, $1 \leq i \leq K$.
Specifically, by applying formula \eqref{eq:Palm1}, we have
\begin{eqnarray}\label{eq:rateconv2}
\Exp{\int_0^t e^{u \lambda_i(s)} N_j(ds) } &=& \left(\beta_j t \right) \Expj{ e^{u \lambda_i(0^-)}} \, , \\
\Exp{ \int_0^t \left(   e^{u r_i} - e^{u \lambda_i(s)} \right) N_i(ds)} &=&  \left( \beta_i t \right) \Expi{  e^{u r_i} - e^{u \lambda_i(0^-)} } \, ,
\end{eqnarray}
where $\Expi{\cdot}$ denotes expectation with respect to ${\mathbb P}^0_i$.
With these observations, the rate-conservation equation can be expressed under a local form, i.e., 
without integral terms, but at the cost of taking expectation with respect to distinct probabilities:
\begin{eqnarray}\label{eq:rateConsPalm}
\lefteqn{
\frac{u}{\tau_i}  \Exp{ (b_i-\lambda_i) e^{u \lambda_i}} } \nonumber\\
&& \hspace{40pt} + \sum_{j \neq i} \left( e^{u \mu_{ij}}-1\right) \beta_j \Expj{e^{u \lambda_i(0^-)}}
+  \beta_i  \Expi{  e^{u r_i} - e^{u \lambda_i(0^-)} } = 0 \, . 
\end{eqnarray}
The above equation can then be expressed under a local form involving only
the stationary measure thanks to Papangelou's theorem \eqref{eq:Palm2}, allowing us to write
\begin{eqnarray}
\hspace{30pt} \beta_j \Expj{  e^{u \lambda_i(0_-)}} = \Exp{ \lambda_j  e^{u \lambda_i}} \quad \mathrm{and} \quad \beta_i \Expi{  e^{u \lambda_i(0_-)}} = \Exp{ \lambda_i  e^{u \lambda_i}} \, .
\end{eqnarray}
Using the above relations in \eqref{eq:rateConsPalm}, the final form of the
exact rate-conservation equations of $\lbrace e^{u\lambda_i(t)} \rbrace_{t \in \mathbb{R}}$, $1 \leq i \leq K$, is
\begin{eqnarray}\label{eq:relaxCons}
\lefteqn{
-\left( 1+ \frac{u}{\tau_i}\right)  \Exp{ \lambda_i  e^{u \lambda_i}}  + \frac{ub_i}{\tau_i} \Exp{e^{u \lambda_i}} } \nonumber\\
&& \hspace{80pt} + \sum_{j \neq i}\left( e^{u \mu_{ij}}-1\right) \Exp{ \lambda_j  e^{u \lambda_i}} +  \beta_i e^{u r_i} = 0 \, ,
\end{eqnarray}
where we have dropped time dependence for stationary random variables.


\subsubsection{Moment truncation}
Applying the RCP under the Poisson Hypothesis effectively truncates
correlation terms due to interactions in the exact rate-conservation equation of replica models.
Although not apparent in the Markovian treatment of \Cref{sec:RMFA}, such a truncation become
straightforward when working on the rate-conservation equation \eqref{eq:relaxCons} obtained via Palm calculus.
Indeed, \eqref{eq:relaxCons} can be interpreted as a differential equation for the one-dimensional MGF
of $\bm{\lambda}$ defined by $L_i(u) = \Exp{e^{u \lambda_i}}$ for all $i$.
However,  \eqref{eq:relaxCons} for $L_i$ involves the second-order statistics
of $\bm{\lambda}$ via the terms $\Exp{ \lambda_j  e^{u \lambda_i}}$,
which is not captured by $L_i$ but by the two-dimensional MGFs of $\bm{\lambda}$.
Not surprisingly, making the Poisson Hypothesis allows one to close  \eqref{eq:relaxCons},
as it implies that the stochastic intensities of distinct neurons are independent variables:
\begin{eqnarray}
\Exp{ \lambda_j e^{u \lambda_i}}   = \beta_j \Exp{e^{u \lambda_i}} \quad \mathrm{for} \quad j \neq i \, .
\label{eq:independentization}
\end{eqnarray}
Thus, under the Poisson Hypothesis, \eqref{eq:relaxCons} becomes an equation about the random variable $\lambda_i$ alone:
\begin{eqnarray}
\lefteqn{
-\left( 1+ \frac{u}{\tau_i}\right)  \Exp{ \lambda_i  e^{u \lambda_i}}   } \nonumber\\
&& \hspace{50pt} +  \left( \frac{ub_i}{\tau_i}+\sum_{j \neq i}\left( e^{u \mu_{ij}}-1\right) \beta_j \right) \Exp{e^{u \lambda_i}} +  \beta_i e^{u r_i} = 0 \, .
\label{eq:direct}
\end{eqnarray}

The above equation is precisely that intervening in the mean-field-replica \emph{ansatz} in \Cref{def:relaxingAnsatz}.
As announced, it has been obtained by truncation of the rate-conservation equations via
Palm calculus and without any explicit reference the RMF network.
Considering \eqref{eq:direct} as a heuristic simplification of \eqref{eq:relaxCons} leads to a natural question: why should the heuristic simplification  based on \eqref{eq:independentization}
lead to some equation having a probabilistic interpretation? The RMF framework provides the answer to this question:
the RMF network is a stochastic dynamical system whose steady-state MGF should satisfy \eqref{eq:direct}.
In other words, the existence of a steady state for the RMF network, which is conjectured here, justifies
the existence of at least one probabilistic solution to \eqref{eq:direct}.
As stated previously, proving rigorously the existence of that steady state consists in establishing the property of propagation of chaos
\cite{Sznitman:1989} in RMF networks, which is beyond the aims of our analysis.


\subsection{Analytical solutions for replica-mean-field models}\label{sec:Sol}

The rate-con\-servation equations appearing in the RMF \emph{ansatz} are first-order ODEs. 
Hence, characterizing the stationary state of RMF networks amounts to specifying the unknown mean intensities
featuring in these differential equations.
Intuitively, the mean intensities must solve a set of self-consistency equations:
for each neuron, $\beta_i$ is the output firing rate of a neuron subjected to input firing rates
$\beta_j$ delivered via synaptic weight $\mu_{ij}$.
The goal of this section is twofold: first, we derive such self-consistency equations 
via simple analyticity requirements of the solutions of the differential equations. 
Second, we numerically validate the properties of the RMF framework by comparison
with the original LGL network or with the classical thermodynamic limit.

\subsubsection{The counting model case}

The analytical strategy that we will follow for general LGL models is first exemplified on the simplest network, i.e.,
the counting model with $K$ fully connected neurons with homogeneous synaptic weights $\mu$ and with uniform base rate $b$.
By neuronal exchangeability, the RMF \emph{ansatz} for the counting model (see \Cref{def:countingAnsatz}) 
takes the form of a single equation for the PGF of $C$, the number of spikes received by a neuron since the last reset.
Then, for any $\beta$, that equation admits a unique solution $G$ satisfying the normalization condition that $G(1)=1$,
thereby defining a family of candidate PGFs $\lbrace G_\beta \rbrace_\beta$, parameterized by the unknown $\beta$.
As explained above, the RMF \emph{ansatz} should have at least one solution $G_\beta$ which is a PGF.
It turns out that, for the counting model, requiring the analyticity of the solutions in zero is enough
to determine a unique PGF solution to the RMF \emph{ansatz}. 
Specifically, we show in the following that, given the normalization condition $G(1)=1$,
there is a unique continuous solution to the RMF \emph{ansatz} and that the normalization condition
for that solution yields the self-consistency equation for $\beta$.
Moreover, we are able to show that this equation uniquely specifies $\beta$ and that
the corresponding function $G_\beta$ is indeed a PGF by explicitly exhibiting the associated
stationary probability distribution. These results are summarized in the following theorem:


\begin{theorem}\label{th:counting}
For the counting model,
there is a unique integer-valued random variable $C$ whose PGF is solution to the RMF \emph{ansatz}  \Cref{def:countingAnsatz}. 
Moreover, $(i)$ the mean intensity $\beta = b+\mu\Exp{C}$ is the unique solution to:
\begin{eqnarray}\label{eq:selfCons1}
\beta = \frac{\mu c^a e^{-c}}{\gamma (a,c)} \quad \mathrm{with} \quad a = \frac{(K-1)\beta+b}{\mu} \quad \mathrm{and} \quad c = \frac{(K-1)\beta}{\mu} \, ,
\end{eqnarray}
where $\gamma$ denotes the lower incomplete Gamma function, and $(ii)$ the stationary distribution of $C$ is given by 
\begin{eqnarray}
p(n) =
\left\{
\begin{array}{ccc}
\displaystyle \frac{\beta}{\mu a} = \frac{\beta}{(K-1)\beta+b} \, , & \mathrm{if} &  n=0 \, ,\\
\\
\displaystyle \frac{c^a e^{-c}}{\gamma(a,b)} \frac{\Gamma(a+n+1)c^n}{\Gamma(a)\Gamma(n+1)}  \, , & \mathrm{if} &  n>0 \, .
\end{array}
\right.\nonumber\\
\end{eqnarray}
\end{theorem}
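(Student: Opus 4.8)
For the counting model the post-spike reset is to the base rate, so $r=b$ in \eqref{eq:firstOrder}. The plan is to treat the RMF \emph{ansatz} of \Cref{def:countingAnsatz} as a first-order \emph{linear} ODE for the candidate PGF $G$ and solve it in closed form. First I would put \eqref{eq:firstOrder} in normal form
\begin{eqnarray*}
G'(z) = \frac{\beta}{\mu z} + \Big( c - \frac{a}{z}\Big) G(z), \quad c = \frac{(K-1)\beta}{\mu}, \quad a = \frac{(K-1)\beta + b}{\mu},
\end{eqnarray*}
which has a regular singular point at $z=0$. The integrating factor is $z^{a}e^{-cz}$; integrating from the origin and substituting $t=cs$ turns the resulting integral into a lower incomplete Gamma function, giving the general solution
\begin{eqnarray*}
G(z) = z^{-a}e^{cz}\Big( \frac{\beta}{\mu}\, c^{-a}\,\gamma(a,cz) + C\Big),
\end{eqnarray*}
with $C$ a free integration constant.

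The two structural requirements then select the solution. Since $a=((K-1)\beta+b)/\mu>0$, the homogeneous piece $C\,z^{-a}e^{cz}$ is singular at $z=0$ (a pole or branch point), whereas the particular piece is analytic there because $z^{-a}\gamma(a,cz)=\sum_{k\ge0}(-1)^{k}c^{a+k}z^{k}/(k!(a+k))$ is a genuine power series. As any PGF is analytic at $0$, analyticity forces $C=0$ and isolates, for each $\beta$, the unique admissible candidate
\begin{eqnarray*}
G_\beta(z) = \frac{\beta}{\mu}\, c^{-a}\, z^{-a}\, e^{cz}\,\gamma(a,cz).
\end{eqnarray*}
Imposing the normalization $G_\beta(1)=1$ yields exactly $\beta=\mu c^{a}e^{-c}/\gamma(a,c)$, i.e.\ the self-consistency equation \eqref{eq:selfCons1}; moreover, letting $z\to1$ directly in \eqref{eq:firstOrder} recovers $\beta=b+\mu\Exp{C}$ automatically, so that the $\beta$ appearing in the ODE is indeed the mean intensity.

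To read off the law of $C$ and to confirm that $G_\beta$ is a genuine PGF, I would expand $e^{x}\gamma(a,x)$ via the Tricomi series $\gamma(a,x)=x^{a}e^{-x}\sum_{k\ge0}\Gamma(a)x^{k}/\Gamma(a+k+1)$, which gives
\begin{eqnarray*}
G_\beta(z) = \frac{\beta}{\mu}\,\Gamma(a)\sum_{n\ge0}\frac{c^{n}}{\Gamma(a+n+1)}\, z^{n}, \qquad p(n) = \frac{\beta}{\mu}\,\frac{\Gamma(a)\,c^{n}}{\Gamma(a+n+1)}.
\end{eqnarray*}
Two points are worth stressing. Every coefficient is manifestly positive and the series converges for every $\beta$ (the denominator grows super-exponentially), so $G_\beta$ always defines a finite, nonnegative sequence; the \emph{only} obstruction to its being a probability law is the normalization $\sum_{n}p(n)=1$, which is precisely \eqref{eq:selfCons1}. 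In particular $p(0)=\beta/(\mu a)=\beta/((K-1)\beta+b)$, matching the claimed value, and the general term gives the asserted stationary distribution.

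It then remains to show that \eqref{eq:selfCons1} has a unique positive root, equivalently that $M(\beta):=\sum_{n}p(n)=\tfrac{\beta}{\mu}\,c^{-a}e^{c}\gamma(a,c)$ equals $1$ exactly once. Using $\gamma(a,c)\sim c^{a}/a$ as $c\to0$, and a Stirling/Gaussian estimate $\gamma(a,c)\sim\tfrac12\Gamma(a)$ as $a,c\to\infty$ with $a-c=b/\mu$ fixed, I would verify $M(0^{+})=0$ and $M(\infty)=\infty$, so that existence follows by continuity. Uniqueness I would get by showing $M$ is strictly increasing, equivalently that $\beta\mapsto\Phi(\beta)/\beta$ is strictly decreasing for the fixed-point map $\Phi(\beta)=\mu c^{a}e^{-c}/\gamma(a,c)$, which reduces to the pointwise inequality $c\,\tfrac{d}{dc}\log\Phi<1$. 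I expect this monotonicity to be the main obstacle: because both the order $a$ and the argument $c$ of $\gamma(a,c)$ vary with $\beta$, differentiation produces the order-derivative $\partial_a\gamma(a,c)=\int_0^{c}t^{a-1}e^{-t}\ln t\,dt$, which has no elementary form. The cleanest control is to write $\partial_a\gamma(a,c)/\gamma(a,c)=\Exp{\ln T}$ for $T$ a $\mathrm{Gamma}(a)$ variable truncated to $(0,c)$ and to bound this log-moment (e.g.\ by Jensen against the truncated mean); establishing the resulting inequality is the technical heart of the argument. Once $\beta$ is unique, $G_\beta$ is the unique admissible PGF, and the associated $C$ is the unique random variable asserted by the theorem.
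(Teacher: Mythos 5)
Your solution-selection argument is sound and matches the paper's: solve the linear ODE, observe that the homogeneous part $C\,z^{-a}e^{cz}$ is singular at $z=0$ while the particular part is an entire power series, kill the constant by analyticity of a PGF at the origin, and convert the normalization $G(1)=1$ into the self-consistency equation \eqref{eq:selfCons1}. (The paper organizes this in the opposite order---it writes the solution normalized at $z=1$ and then reads off the coefficient of the $z^{-a}$ singularity as $z\to 0^+$---but the content is identical.) Your expansion of the law is also correct, and in fact more careful than the displayed statement: your coefficients $p(n)=\tfrac{\beta}{\mu}\Gamma(a)c^{n}/\Gamma(a+n+1)$ satisfy the recurrence $(a+n)\,p(n)=c\,p(n-1)$ forced by \eqref{eq:firstOrder}, whereas the formula in the theorem contains typographical errors (a spurious $1/\Gamma(n+1)$, an inverted ratio $\Gamma(a+n+1)/\Gamma(a)$, and $\gamma(a,b)$ in place of $\gamma(a,c)$).

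The genuine gap is uniqueness of $\beta$, which is part of the theorem's claim. You correctly reduce it to strict monotonicity of $M(\beta)=\beta/\Phi(\beta)$, but you then propose to differentiate $\gamma(a,c)$ in its \emph{order}, control $\partial_a\gamma/\gamma$ as a log-moment of a truncated Gamma variable, and you explicitly leave the resulting inequality unproven---so the uniqueness assertion is not established by your argument. The paper closes exactly this step with an elementary lemma (\Cref{lm:counting}) that uses the very Tricomi expansion you already invoked for $p(n)$: substituting $c=(K-1)\beta/\mu$ and $a=c+x$ with $x=b/\mu$, the self-consistency equation becomes $f(c)=K-1$ with
\begin{equation*}
f(c)=c^{1-(x+c)}e^{c}\gamma(x+c,c)=\sum_{n=0}^{\infty}\frac{c^{n+1}}{(x+c)(x+c+1)\cdots(x+c+n)}\,,
\end{equation*}
and each summand is strictly increasing in $c$, since its logarithmic derivative equals $\tfrac{n+1}{c}-\sum_{m=0}^{n}\tfrac{1}{x+c+m}>0$ for $x>0$; as $f(0)=0$, $f$ is continuous, and each summand tends to $1$ as $c\to\infty$, $f$ increases from $0$ to $\infty$ and the root is unique. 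Since $M(\beta)=f(c)/(K-1)$, this is precisely the monotonicity you needed: the trick that dissolves your ``technical heart'' is to absorb the simultaneous dependence of the order and the argument on $\beta$ termwise into the series, rather than differentiating the incomplete Gamma function in its order.
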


\begin{proof}
The unique solution to the first-order differential equation \eqref{eq:firstOrder}
that satisfies the normalization condition $G(1)=1$ is
\begin{eqnarray}
G(z)=
\frac{e^{c(z-1)}}{ z^a} \left(1+ \frac{\beta e^{c}}{\mu c^a} \big(\Gamma(a,c)-\Gamma(a,cz)\big)  \right) \, ,
\end{eqnarray}
where $\Gamma$ denotes the upper incomplete Gamma function, i.e., $\Gamma(x,y) = \int_y^\infty t^{x-1}e^{-t} \, dt$,
and where we have used the auxiliary parameters $a$ and $c$  defined in \eqref{eq:selfCons1}.
Solutions $G$ are analytic on $\mathbb{R}$ except possibly in zero, where $G$ generically has an infinite discontinuity.
Indeed, noting that $a>0$, we have the following asymptotic behavior when $z \to 0^+$:
\begin{eqnarray}
G(z) =  z^{-a} \left(e^{-c}+\frac{\beta ( \Gamma (a,c)- \Gamma (a))}{\mu b^a}\right) + \frac{\beta}{\mu a}+O\left(z\right) \, .
\end{eqnarray}
As probability-generating functions must be analytic in zero, we require the term between parentheses
to be zero in the above expression, which is equivalent to requiring that $\beta$ solves the leftmost
equation of \eqref{eq:selfCons1}.
Observing that $(K-1)\beta = c/\mu$ and $a=c+b/\mu$,  \eqref{eq:selfCons1} can be rewritten as an equation on $c$:
\begin{eqnarray}
c^{1-\left(c+\frac{b}{\mu}\right)}e^c \, \gamma \left(c+\frac{b}{\mu} , c\right) = K-1
\end{eqnarray}
Then, applying Lemma \ref{lm:counting} (see below) with $x=b/\mu$ and $y=K-1$ shows that
Equation \eqref{eq:selfCons1} admits a unique solution for $b>0$, $\mu > 0$ and $K>0$.
The result for $\mu=0$, i.e., for independent neurons, is clear: $\lambda = b$.
For $\beta$ solving \eqref{eq:selfCons1}, the solution to \eqref{eq:firstOrder} can be written 
\begin{eqnarray}
G(z)=
\frac{e^{c(z-1)}}{ z^a} \frac{\gamma(a,zc)}{\gamma(a,c)}   \, ,
\end{eqnarray}
and repeated differentiations shows that $G$ is the PGF associated
to the distribution  defined over the integers by 
\begin{eqnarray}
p(n) = \frac{G^{(n)}(0)}{n!} = \frac{c^a e^{-c}}{\gamma(a,b)} \frac{c^n/n! }{a(a+1)\ldots(a+n)} = \frac{c^a e^{-c}}{\gamma(a,b)} \frac{\Gamma(a+n+1)c^n}{\Gamma(a)\Gamma(n+1)}  \, ,
\end{eqnarray}
and for which we have
\begin{eqnarray}
p(0) = \frac{\beta}{\mu a} = \frac{\beta}{(K-1)\beta+b} \leq 1 \, .
\end{eqnarray}
\end{proof}

The proof of \Cref{th:counting} utilizes the following lemma:


\begin{lemma}\label{lm:counting}
For all $x, y \geq 0$, there is a unique positive real $c$ such that
\begin{eqnarray}\label{eq:lmCounting}
c^{1-(x+c)}e^c \gamma(x + c, c) = y \, ,
\end{eqnarray}
where $\gamma$ denotes the lower incomplete Gamma function.
\end{lemma}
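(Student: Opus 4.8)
The plan is to recast the equation as $F(c)=y$, where $F(c)=c^{1-(x+c)}e^{c}\gamma(x+c,c)$ is defined for $c>0$, and to prove that $F$ is continuous, strictly increasing, and has the right boundary limits; existence and uniqueness of the root then follow immediately from the intermediate value theorem together with strict monotonicity. The endpoint $y=0$ corresponds to the boundary $c\to 0^{+}$ (this is the degenerate $K=1$ case in the application), so the substantive claim is that $F$ maps $(0,\infty)$ bijectively onto its range.

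The first step is to trade the incomplete-gamma factor for an integral better suited to monotonicity. Writing $\gamma(x+c,c)=\int_0^{c}t^{x+c-1}e^{-t}\,dt$ and substituting $t=cs$ collapses all the prefactors and yields the representation
\[
F(c)=c\int_0^1 s^{x-1}\bigl(s\,e^{1-s}\bigr)^{c}\,ds .
\]
Since $\psi(s)=s\,e^{1-s}$ satisfies $0<\psi(s)<1$ on $(0,1)$ with $\psi(1)=1$, the parameter $c$ enters only through a genuine decay factor. I would then change variables to $w=w(s)=s-1-\log s$, a strictly decreasing bijection of $s\in(0,1)$ onto $w\in(0,\infty)$, which turns $F$ into a Laplace-transform-type object $F(c)=c\int_0^{\infty}\rho(w)e^{-cw}\,dw$ with $\rho(w)=s(w)^{x}/(1-s(w))>0$. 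The structural fact that makes everything work is that $\rho$ is strictly decreasing in $w$: as $w$ grows, $s(w)$ shrinks, so both $s^{x}$ and $1/(1-s)$ decrease.

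The main obstacle is the strict monotonicity of $F$; a direct attack on $F'(c)$ through derivatives of the incomplete gamma (digamma terms, boundary contributions) is messy and not obviously sign-definite, which is precisely why the integral representation is worth setting up. Differentiating the log gives $F'(c)/F(c)=1/c-\mathbb{E}_c[w]$, where $\mathbb{E}_c$ is expectation under the probability measure proportional to $\rho(w)e^{-cw}\,dw$. An integration by parts, whose boundary terms vanish because $w\rho(w)e^{-cw}\to 0$ at both ends (using $w\rho(w)\sim(w/2)^{1/2}$ as $w\to 0^{+}$ and the decay of $\rho e^{-cw}$ at infinity), produces the identity
\[
c\,\mathbb{E}_c[w]=1+\frac{\int_0^{\infty} w\,\rho'(w)\,e^{-cw}\,dw}{\int_0^{\infty}\rho(w)\,e^{-cw}\,dw}.
\]
Because $\rho'<0$, the correction term is strictly negative, so $c\,\mathbb{E}_c[w]<1$ and hence $F'(c)>0$ for every $c>0$.

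Finally, the boundary limits come from the same integral. For $x>0$ one checks $\rho(w)\sim e^{-xw}$ as $w\to\infty$, so $\int_0^{\infty}\rho<\infty$ and $F(0^{+})=0$; meanwhile the integrable singularity $\rho(w)\sim(2w)^{-1/2}$ as $w\to 0^{+}$ gives, by Watson's lemma, $F(c)\sim\sqrt{\pi c/2}\to\infty$. Continuity, strict increase, and these two limits identify $F$ as a bijection of $(0,\infty)$ onto $(0,\infty)$, delivering a unique positive $c$ for each $y>0$. The one delicate point to control throughout is the behaviour near $s=1$ (equivalently $w=0$), where the quadratic vanishing $w\sim(1-s)^2/2$ governs both the $w\to0$ asymptotics of $\rho$ and the integrability needed for the integration by parts; once that is handled, the remaining estimates are routine.
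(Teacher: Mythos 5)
Your proposal is correct, but it takes a genuinely different route from the paper's. The paper expands the incomplete gamma factor in a power series, writing $f(c)=c^{1-(x+c)}e^{c}\gamma(x+c,c)=\sum_{n\ge 0}c^{n+1}/\bigl((x+c)(x+c+1)\cdots(x+c+n)\bigr)$, checks by termwise logarithmic differentiation that each summand is increasing, and gets the boundary behavior from $f(0)=0$ together with $f_n(c)\to 1$ as $c\to\infty$ (so the series diverges term by term). You instead build a Laplace-transform representation, $F(c)=c\int_0^1 s^{x-1}(se^{1-s})^{c}\,ds=c\int_0^\infty\rho(w)e^{-cw}\,dw$ with $\rho$ strictly decreasing, prove strict monotonicity by a log-derivative plus integration-by-parts argument, and obtain the limits from integrability of $\rho$ (for $x>0$) and Watson's lemma. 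Each approach buys something: the paper's series argument is more elementary, requiring no asymptotic analysis or boundary-term estimates, while yours yields quantitative information for free, namely the growth rate $F(c)\sim\sqrt{\pi c/2}$, which the paper only recovers separately in a remark on the large-$K$ scaling law, and it gives a cleaner structural reason for monotonicity. One caveat applies to both proofs: the lemma as stated for all $x,y\ge 0$ fails at the boundary. For $x=0$ the $n=0$ term of the paper's series is identically $1$, so $f(0^{+})=1$ and no solution exists for $0\le y<1$ (the paper's assertion ``$f(0)=0$'' silently assumes $x>0$); likewise $y=0$ forces $c=0$, which is not positive. You are in fact slightly more careful on this point, flagging the $y=0$ endpoint and restricting the computation of $F(0^{+})=0$ to $x>0$; since the application has $x=b/\mu>0$ and $y=K-1\ge 1$ whenever the network has at least two neurons, nothing is lost in either treatment.
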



\begin{proof}
The power series representation of the incomplete Gamma function yields
\begin{eqnarray}
f(c) = c^{1-(x+c)}e^c \gamma(x + c, c) = \sum_{n=0}^\infty \frac{c^{n+1}}{(x+c)(x+c+1) \ldots (x+c+n)}\, ,
\end{eqnarray}
where the series converges uniformly in $c$ on all compacts in $\mathbb{R}_+$.
Denoting the continuous summand functions by
\begin{eqnarray}
f_n(c) = \frac{c^{n+1}}{(x+c)(x+c+1) \ldots (x+c+n)} \, ,
\end{eqnarray}
we observe that $f_n$ is differentiable on $\mathbb{R}_+^*$ with
\begin{eqnarray}
f'_n(c)=\frac{c^n \left(n+1 - \sum_{m=0}^{n} \frac{c}{x+c+m}\right)}{(x+c)(x+c+1) \ldots (x+c+n)} > 0 \, .
\end{eqnarray}
Thus, by uniform convergence, $f$ is a strictly increasing continuous function.
To prove the lemma, we need to show that $f$ is onto $\mathbb{R}_+$, i.e. that $\lim_{c \to \infty} f(c) = \infty$ since $f(0)=0$.
This limit directly follows from the positivity of $f_n$ on $\mathbb{R}_+$ and from the fact that $\lim_{c \to \infty} f_n(c) =1$ for all $n \geq 0$.
\end{proof}


\begin{remark}
The generating function $G$ obtained by solving for $\mu = 0$ and $\beta = b$
\begin{eqnarray}
G(z) = \frac{b}{b+(K-1)(1-z)} \, ,
\end{eqnarray}  
is the PGF of a geometric distribution with parameter $(1+(K-1)/b)^{-1}$, which is precisely the law of independent Poissonian arrivals during an exponential waiting time, i.e., the law of the spike count of a neuron during the inter-spike period of another. 
In particular, the mean count value is $G'(z) = (K-1)/b$, as expected.
\end{remark}


\begin{remark}
While neglecting coupling between neurons, the stationary distribution $p$ incorporates self-excitation via interaction-dependent mean intensities and also captures the effect of spiking reset.
For instance, keeping  $a-c=b/\mu$ and letting $a \to \infty$, as in the limit of large $K$, we have
\begin{eqnarray}
\frac{b^a e^{-b}}{\Gamma (a)-\Gamma (a,b)} = \sqrt{\frac{2 a}{\pi}} + O(1) \, ,
\end{eqnarray}
which implies an asymptotic scaling law with the network size $K$ for finite synaptic weight $\mu$:
\begin{eqnarray}
\beta \sim \sqrt{\frac{2 K \mu \beta}{\pi}} \quad \mathrm{i.e.} \quad  \beta \sim \frac{2 K \mu}{\pi} \, .
\end{eqnarray}
\end{remark}


\subsubsection{The relaxing model case}
The arguments proving \Cref{th:counting} for the counting-neuron model essentially
generalize to the RMF \emph{ansatz} for heterogeneous LGL networks with relaxation
(see \Cref{def:relaxingAnsatz}), albeit with some caveats.
Indeed, we show that the RMF \emph{ansatz} reduces to a set of self-consistency
equations by writing down that normalization conditions for the set of continuous solutions to the \emph{ansatz}.
We also show that continuous solutions are necessarily completely monotone,
which implies by Bernstein's theorem \cite{Feller2:1971}, that such solutions are indeed MGF for some probability distributions.
Moreover, utilizing monotonicity arguments, we show that \Cref{th:counting} implies
the existence of a solution $\bm{\beta}$ to the obtained set of self-consistency equations.
The main caveat is that we do not have any direct argument establishing the uniqueness of
solutions, although we conjecture that uniqueness holds for heterogeneous LGL networks with relaxation.
These results are summarized in the following theorem, which is proved in \Cref{sec:relaxing}:


\begin{theorem}\label{th:relaxingNeuron}
For all LGL relaxing models,
there is a set of independent real random variables
$\lbrace \Lambda_i \rbrace_{1 \leq i \leq K}$ whose MGFs
$\lbrace L_i \rbrace_{1 \leq i \leq K}$ are solutions to the RMF \emph{ansatz}  specified in \Cref{def:relaxingAnsatz} with
\begin{eqnarray}\label{eq:solMGF}
L_i(u) = \beta_i \int_{-\infty}^u \exp{ \left(\left[h_i(x)+\sum_{j \neq i} \beta_j h_{ij}(x) \right]^u_v +l_i(v)\right)} \, dv \, ,
\end{eqnarray}
where the functions $g_i$, $h_i$, and $h_{ij}$ are defined by
\begin{eqnarray}\label{eq:sysMIh1}
&l_i(x) = \tau_i r_i \left(e^{\frac{x}{\tau_i}} -1 \right) \, ,  \quad h_i(x) =  b_i \left( \tau_i \left(e^{\frac{x}{\tau_i}} -1 \right) - x \right) \, ,&\\ 
&h_{ij}(x) = \tau_i e^{-\tau_i \mu_{ij}} \left(\mathrm{Ei}\left(\tau_i \mu_{ij}e^{\frac{x}{\tau_i}} \right)-\mathrm{Ei}\left(\tau_i \mu_{ij} \right)\right) - x \, , \label{eq:sysMIh2}&
\end{eqnarray}
and where $\mathrm{Ei}$ denotes the exponential integral function.
In particular, the mean intensities $\Exp{\Lambda_i} = \beta_i$, $1 \leq i \leq k$, solve the system of equations
\begin{eqnarray}\label{eq:sysMI}
 \frac{1}{\beta_i} = \int_{-\infty}^0 \exp{\left(-h_i(v)-\sum_{j \neq i} \beta_j h_{ij}(v) +l_i(v) \right)}   \, dv \, . \\ \nonumber
\end{eqnarray}
\end{theorem}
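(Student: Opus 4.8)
The plan is to treat each equation of the RMF \emph{ansatz} \eqref{eq:sysLaplace} as a first-order linear ODE for the marginal transform $L_i$, in which the mean rates $\bm{\beta}=\lbrace \beta_j\rbrace_j$ enter only as parameters, to solve it explicitly, and then to fix $\bm{\beta}$ by demanding that each $L_i$ be a genuine MGF. The first move is to absorb the variable leading coefficient $1+u/\tau_i$ through the change of variable $e^{y/\tau_i}=1+u/\tau_i$, i.e.\ $u=\tau_i(e^{y/\tau_i}-1)$, which maps $u\in(-\tau_i,\infty)$ onto $y\in\mathbb{R}$ and sends $u=0$ to $y=0$. One checks directly that $(1+u/\tau_i)\,\partial_u=\partial_y$, that the rate coefficient $ub_i/\tau_i+\sum_{j\neq i}(e^{u\mu_{ij}}-1)\beta_j$ becomes $h_i'(y)+\sum_{j\neq i}\beta_j h_{ij}'(y)$, and that the reset term $\beta_i e^{ur_i}$ becomes $\beta_i e^{l_i(y)}$, where $h_i$, $h_{ij}$, $l_i$ are precisely the primitives recorded in \eqref{eq:sysMIh1}--\eqref{eq:sysMIh2} (the exponential-integral $\mathrm{Ei}$ arising exactly as the antiderivative of $e^{u\mu_{ij}}/(1+u/\tau_i)$). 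In the new variable the \emph{ansatz} reads $\partial_y \tilde L_i=H_i'(y)\,\tilde L_i+\beta_i e^{l_i(y)}$ with $H_i:=h_i+\sum_{j\neq i}\beta_j h_{ij}$, and the integrating factor $e^{-H_i}$ produces the one-parameter family $\tilde L_i(y)=e^{H_i(y)}\bigl(c_i+\beta_i\int^y e^{-H_i(v)+l_i(v)}\,dv\bigr)$.

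Next I would pin down the constant $c_i$ by the same analyticity/continuity requirement used in the counting case, now imposed at the singular boundary $y\to-\infty$ (equivalently $u\to-\tau_i^+$). There $H_i'(y)\to-\kappa_i$ with $\kappa_i=b_i+\sum_{j\neq i}\beta_j(1-e^{-\tau_i\mu_{ij}})>0$, so the homogeneous solution $e^{H_i(y)}$ blows up; boundedness of $L_i$ forces the lower limit to be $-\infty$, which is exactly \eqref{eq:solMGF}. A short asymptotic estimate (the integral decays like $e^{\kappa_i y}$ while $e^{H_i}$ grows like $e^{-\kappa_i y}$) shows that the resulting $L_i$ stays finite and continuous up to the boundary, so \eqref{eq:solMGF} is the unique continuous candidate. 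Evaluating at $y=0$ and using $h_i(0)=h_{ij}(0)=l_i(0)=0$ gives $L_i(0)=\beta_i\int_{-\infty}^0 e^{-H_i(v)+l_i(v)}\,dv$; the normalization $L_i(0)=1$ is therefore equivalent to the self-consistency equation \eqref{eq:sysMI}, and differentiating at $u=0$ recovers $L_i'(0)=\beta_i$, consistent with $\beta_i=\Exp{\Lambda_i}$.

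It then remains to certify that these continuous solutions are MGFs and that the coupled system \eqref{eq:sysMI} is solvable. For the former I would show that the reflected transform $s\mapsto L_i(-s)$ is completely monotone---cleanly read off by differentiating \eqref{eq:solMGF} under the integral sign and tracking signs---so that Bernstein's theorem \cite{Feller2:1971} identifies $L_i$ as the Laplace transform of a probability law, defining $\Lambda_i$; the asserted mutual independence of the $\Lambda_i$ is built into the product-form \emph{ansatz}. For the latter I would read \eqref{eq:sysMI} as a fixed-point condition $\bm{\beta}=\Phi(\bm{\beta})$ and exploit the monotone dependence of each $1/\beta_i$ on the input rates $\beta_j$: one can then confine the iteration of $\Phi$ to an order interval between suitable sub- and super-solutions---with the homogeneous counting-model rates of \Cref{th:counting} serving as the comparison---and conclude existence of a fixed point on a compact box by a Brouwer or monotone-iteration argument.

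The main obstacle I anticipate sits in this last paragraph. Verifying complete monotonicity directly from \eqref{eq:solMGF} is delicate, because the $h_{ij}$ involve $\mathrm{Ei}$ and their higher derivatives carry no obvious sign; the cleanest remedy may be to bypass the formula and argue probabilistically that \eqref{eq:solMGF} is the stationary law of the piecewise-deterministic Markov process describing a single relaxing neuron driven by independent Poisson inputs of rates $\beta_j$ and weights $\mu_{ij}$, whose MGF then satisfies the \emph{ansatz} by construction. Establishing existence of $\bm{\beta}$ is the other delicate point, since the self-consistency map is coupled and nonlinear; here the counting-model bounds of \Cref{th:counting} together with monotonicity are exactly what make a fixed point accessible. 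Uniqueness, by contrast, cannot be secured by these arguments---unlike in the counting case---and is therefore left as a conjecture.
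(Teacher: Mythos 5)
Your overall architecture tracks the paper's closely: solve each \emph{ansatz} equation \eqref{eq:sysLaplace} explicitly with $\bm{\beta}$ entering as parameters, select the unique admissible solution by its behavior at the singular point $u=-\tau_i$ (your change of variables $u=\tau_i(e^{y/\tau_i}-1)$ pushes that point to $y=-\infty$, but the selection principle is the same as in Proposition \ref{prop:analysis1}, and your computation of the exponents correctly reproduces \eqref{eq:sysMIh1}--\eqref{eq:sysMIh2}), read off \eqref{eq:sysMI} from the normalization $L_i(0)=1$, and prove existence of $\bm{\beta}$ by a monotone iteration dominated by the counting model of \Cref{th:counting} and Lemma \ref{lm:counting} --- this last step is essentially the paper's part $(ii)$.

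The genuine gap sits exactly where you flag it: certifying that the continuous solution is an MGF. Your primary plan (differentiate \eqref{eq:solMGF} under the integral sign and track signs) does not go through, for the reason you yourself give --- the higher derivatives of the $\mathrm{Ei}$-laden exponents carry no usable sign. The paper's device, which your proposal is missing, is to never differentiate the solution formula at all: one differentiates the ODE itself $n$ times, obtaining $\left(1+u/\tau_i\right)L^{(n+1)}+f_nL^{(n)}-g_n=0$ with $f_n=f_i+n/\tau_i$ and $g_n$ a combination of $g_i^{(n)}$, derivatives of $f_i$, and lower-order derivatives of $L$; the sign hypotheses ($f_i>0$, $g_i>0$, $f_i^{(n)}<0$, $g_i^{(n)}>0$ on $u<0$) are then checked on the \emph{elementary coefficients} $f_i(u)=-ub_i/\tau_i+\sum_{j\neq i}(1-e^{u\mu_{ij}})\beta_j$ and $g_i(u)=\beta_ie^{ur_i}$, where $\mathrm{Ei}$ never appears, and an induction combined with a first-crossing comparison lemma (Lemma \ref{lm:growth}: under those signs the continuous solution cannot cross the nullcline $g/f$ and is therefore increasing) yields $L_i^{(n)}>0$ for all $n$ on $(-\infty,0)$ (Proposition \ref{prop:analysis2}). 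A second, related defect: your $y$-coordinate only covers $u\in(-\tau_i,\infty)$, so even a successful sign argument there would give complete monotonicity only on $(-\tau_i,0)$, whereas Bernstein's theorem requires it on all of $u<0$, i.e.\ \emph{across} the singular point; this is precisely why Proposition \ref{prop:analysis1} labors to establish continuity and differentiability of the solution at $u=-\tau_i$ itself. Your probabilistic fallback (realize the solution as the stationary MGF of a single relaxing neuron driven by independent Poisson inputs of rates $\beta_j$) is viable in principle --- it matches the renewal interpretation in the paper's remark following the theorem, and combined with your uniqueness step it would identify the formula as an MGF --- but as written it is a one-sentence sketch: you would still need Harris ergodicity and exponential integrability of this auxiliary process, its rate-conservation ODE, and the observation that its reset coefficient is its \emph{own} output rate, so that the identification with the \emph{ansatz} solution is only coherent at self-consistent $\bm{\beta}$. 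As it stands, the central step of the theorem is asserted rather than proved.
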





\begin{remark}
The RMF \emph{ansatz} for neurons with excitatory random interaction weights and random reset values
takes the same form as in \Cref{def:relaxingAnsatz}:
\begin{eqnarray}
-\left( 1+ \frac{u}{\tau_i}\right)  L_i'(u)  + f_i(u) L_i(u) +  g_i(u) = 0 \, .
\end{eqnarray}
but with the functions
\begin{eqnarray}
f_i(u) &=& - \frac{ub_i}{\tau_i}  +\sum_{j \neq i}\left( 1-\int_0^\infty e^{u \mu} \, dq_{ij}(\mu)\right) \beta_j  \, ,  \\ \quad g_i(u)&=&\beta_i \int_0^\infty  e^{u r} \, dq_i(r) \, ,
\end{eqnarray}
where $q_{ij}$ is the probability measure of synaptic weight $\mu_{ij}$ and $q_i$
is the probability measure of the reset $r_i$. 
The above functions $f_i$ and $g_i$  still
satisfy the key properties (see Proposition \ref{prop:analysis2}) establishing \Cref{th:relaxingNeuron}, which therefore
extends straightforwardly to the case of excitatory random interactions and random reset values.
\end{remark}


\begin{remark}
The system of equations \eqref{eq:sysMI} can be interpreted probabilistically by considering an isolated relaxing-neuron $i$ subjected to independent Poissonian deliveries from other neurons with rate $\beta_j$.
Actually, one can check that the spiking activity of such a neuron defines a renewal process with a renewal distribution that satisfies
\begin{eqnarray}
\Prob{S_i > t} = \exp{\left(-h_i(-t)-\sum_{j \neq i} \beta_j h_{ij}(-t) +l_i(-t) \right)}  \, .
\end{eqnarray}
Then, the set of self-consistency equations \eqref{eq:sysMI} follows from writing:
\begin{eqnarray}
 \frac{1}{\beta_i} = \Exp{S_i } = \int_0^\infty \Prob{S_i > t} \, dt \, .
\end{eqnarray}
\end{remark}


\begin{remark}\label{rem:csm}
In the absence of relaxation, the inhomogeneous model becomes the ``counting-synapse model'', for which the stochastic intensities can be written as $\lambda_i(t) = b_i + \sum_{j \neq i} \mu_{ij} C_{ij}(t)$ via the introduction of the processes
\begin{eqnarray}
C_{ij}(t) = \int_{T_{i,0}(t)}^t N_j(ds) \,  , \quad j \neq i \, ,
\end{eqnarray}
which count the number of spikes that a neuron $i$ receives from anther neuron $j$ since the last time neuron $i$ spiked.
Taking the limit $\tau_i \to \infty$ in  \eqref{eq:sysMIh1} and \eqref{eq:sysMIh2} yields to the functions $g_i$, $h_i$, and $h_{ij}$ for the counting-synapses model
\begin{eqnarray}
l_i(x) = r_i x \, , \quad h_i(x) = 0 \, , \quad \mathrm{and} \quad h_{ij}(x) = \frac{e^{\mu_{ij}x}-1}{\mu_{ij}} - x \, ,
\end{eqnarray}
where the reset value $r_i$ coincides with the base rate ($r_i=b_i$).
\end{remark}

\section{Neuroscience applications}
\label{sec:impli}
The aim of this section is to illustrate the concrete applications of
the RMF approach through a few examples in neuroscience.
Since the main tool currently used for this class of problems is the TMF  limit,
we first compare the TMF  and the RMF models on a few basic network topologies and show how the latter outperforms the former.
A fundamental difference between the TMF  and the RMF is then discussed through the analysis of the so-called transfer functions of the two models.

\subsection{Numerical comparison with the thermodynamic limit}
At the core of the RMF approach is the assumption that the dynamics of finite-size LGL networks
is well-approximated by neurons experiencing independent Poissonian bombardments from other neurons.
As already mentioned, another possible simplifying assumption is that of the classical TMF  limit. 
In the TMF  model, one substitutes an individual neuron $i$ with a population of $M$ exchangeable
neurons with connections weights $\mu_{ji}/M$, and takes the limit of infinite population size $M \to \infty$.
Propagation of chaos holds in the TMF  limit \cite{DeMasi:2015}.
Thus, a neuron within population $i$ experiences neuronal interactions via the time-dependent deterministic drive 
\begin{eqnarray}
\alpha_i(t) =   \sum_{j \neq i} \mu_{ij} \left( \int_0^\infty \lambda p_j(t,\lambda) \, d\lambda \right) \, ,
\end{eqnarray}
where $p_j(t,\lambda)$ is the probability distribution of the stochastic intensity $\lambda$ of a
neuron within population $j$ at time $t$.
As a result, all neurons become independent in the TMF  limit, and each time-dependent probability
distribution $p_i$ satisfy a forward Kolmogorov equation that can be written
\begin{eqnarray}
\lefteqn{
\partial_i p_i(t,\lambda_i)
=
- \partial_{\lambda_i} \left[  \left(\frac{b_i-\lambda_i}{\tau_i}+ \alpha_i(t) \right)p_i(t,\lambda_i) \right]-} \nonumber\\
&& \hspace{100pt}  \lambda_i p_i(t,\lambda_i) + \left( \int_0^\infty \lambda p_i(t,\lambda) \, d\lambda \right) \delta_{r_i} (\lambda_i) \, . 
\end{eqnarray}
In the above right-hand side, the first term represents the deterministic drift incorporating
relaxation and interaction contributions, the second term is a death term due to neuronal spiking
with rate $\lambda_i$, and the last term represents a birth term localized at reset value $r_i$
with population-level rate $\int_0^\infty \lambda p_i(t,\lambda) \, d\lambda$.
Introducing the variables $s_i= b_i+\tau_i \sum_{j \neq i} \mu_{ij} \beta_j$,
the stationary distribution $p_i$ is thus solution to the equation
\begin{eqnarray}
 \partial_{\lambda_i} \left[ \left( \frac{s_i-\lambda_i}{\tau_i}  \right) p(\lambda_i) \right] + \lambda_i p_i( \lambda_i) 
= \beta_i \delta_{r_i}(\lambda_i) \, .
\end{eqnarray}
The stationary distribution solving the above equation can be expressed in closed form as
\begin{eqnarray}
p_i(\lambda) =  \frac{e^{\tau_i (\lambda-r_i)}}{\vert s_i-\lambda \vert} \bigg \vert \frac{s_i - \lambda}{s_i-r_i } \bigg \vert ^{\tau_i s_i} \beta_i \tau_i \, \mathbbm{1}_{[r_i,s_i]}(\lambda) \, ,
\end{eqnarray}
where $\mathbbm{1}_{[r_i,s_i]}$ is the indicator function of the interval $[r_i,s_i]$.
In turn, the MGF associated to the stationary distribution $p_i$ can be evaluated as
\begin{eqnarray}\label{eq:TMF MGF}
L_i(u) &=& \int e^{u \lambda} p_i(\lambda) \, d\lambda =
 \frac{\beta_i \tau_ie^{s_i u +(s_i-r_i) \tau_i }\gamma \big(\tau_i s_i, (s_i-r_i)(\tau_i+u) \big)}{\big( (s_i-r_i)(\tau_i+u)\big)^{\tau_i s_i}}  \,  \, ,
\end{eqnarray}
from which we deduce the set of TMF  self-consistency equations from the normalization conditions $L_i(0)=1$:
\begin{eqnarray}\label{eq:sysTMF }
\frac{1}{\beta_i} = \frac{\tau_i e^{(s_i-r_i)\tau_i}\gamma \big(\tau_i s_i, (s_i-r_i)\tau_i \big)}{\big( (s_i-r_i)\tau_i\big)^{\tau_i s_i}} \, .
\end{eqnarray}
Observe that the above self-consistency equations closely mirror the form of the set of
equations \eqref{eq:sysMI} obtained from the RMF \emph{ansatz}. 

\begin{figure}
  \centering
  \includegraphics[width=\textwidth]{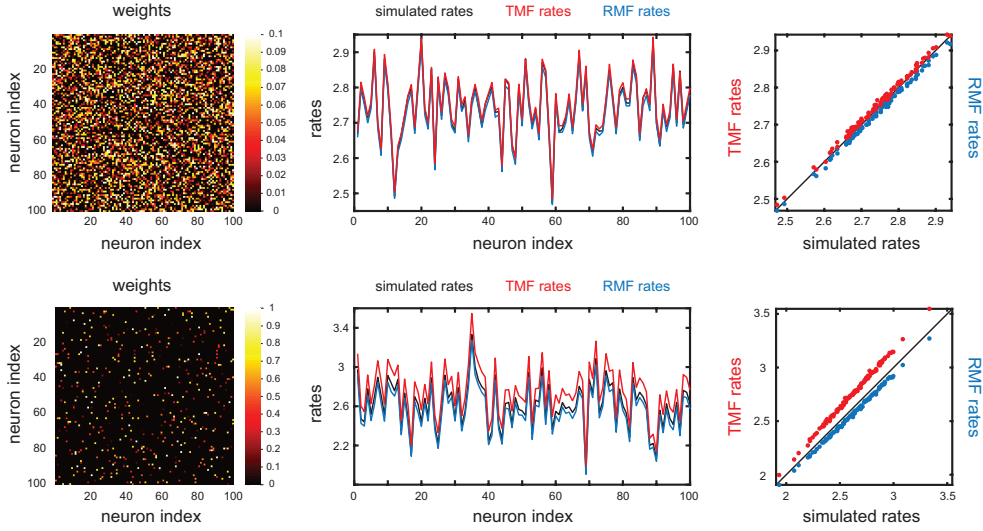}
  \caption{{\bf Recurrent network.} RMF models better capture the stationary firing rate of finite LGN networks than TMF  models for unstructured random networks with sparse, large, synaptic weights.
  {\bf Top row:} LGL network of $100$ counting-synapse neurons, each receiving spikes from randomly sampled $50$ neurons, via identically uniformly distributed synaptic weights. 
  {\bf Bottom row:}  LGL network of $100$ neurons, each receiving spikes from $5$ randomly sampled neurons, via identically uniformly distributed synaptic weights.
  {\bf Left:} Synaptic structure. {\bf Middle:} Numerical stationary rates obtained from discrete-event simulations ($10^7$ spiking events) and from iterated schemes for the RMF model and TMF  model ($20$ iterations). {\bf Right:} Scatter plots comparing the faithfulness of the TMF  model and that of the RMF model.
  }
   \label{fig:ERgraph}
\end{figure}

To explore the formal correspondence between the RMF and TMF  frameworks, let us  consider RMF models in the thermodynamic limit.
In considering such a limit, our goal is to evidence how TMF  models and first-order RMF models differ.
Applying the RCP to networks where we substitute each neuron with a population of $M$
exchangeable neurons yields the following RMF \emph{ansatz}:
\begin{eqnarray}
-\left( 1+ \frac{u}{\tau_i}\right)  L_i'(u) +  \left( \frac{ub_i}{\tau_i}  +\sum_{j \neq i}M \left( e^{\frac{u \mu_{ij}}{M}}-1\right) \beta_j \right) L_i(u) +  \beta_i e^{u r_i} = 0 \, .
\end{eqnarray}
Taking the thermodynamic limit, one has
$\lim_{M \to \infty} M \left( \exp{\left(u \mu_{ij}/M\right)}-1\right) = u \mu_{ij}$
and we obtain the new \emph{ansatz}
\begin{eqnarray}
-\left( 1+ \frac{u}{\tau_i}\right)  L_i'(u) +  \frac{us_i}{\tau_i}   L_i(u) +  \beta_i e^{u r_i} = 0 \, .
\end{eqnarray}
We refer to the above system of equations as the TMF  \emph{ansatz}.
As expected, one can check that the MGFs defined by relation \eqref{eq:TMF MGF} are solutions to the TMF  \emph{ansatz}.
Moreover, the difference between TMF  models and first-order MGF effectively appears to be due to the
terms mediating interactions: these terms are exponential in the first-order RMF limit, whereas they linearize in the TMF  limit.

\begin{figure}
  \centering
  \includegraphics[width=\textwidth]{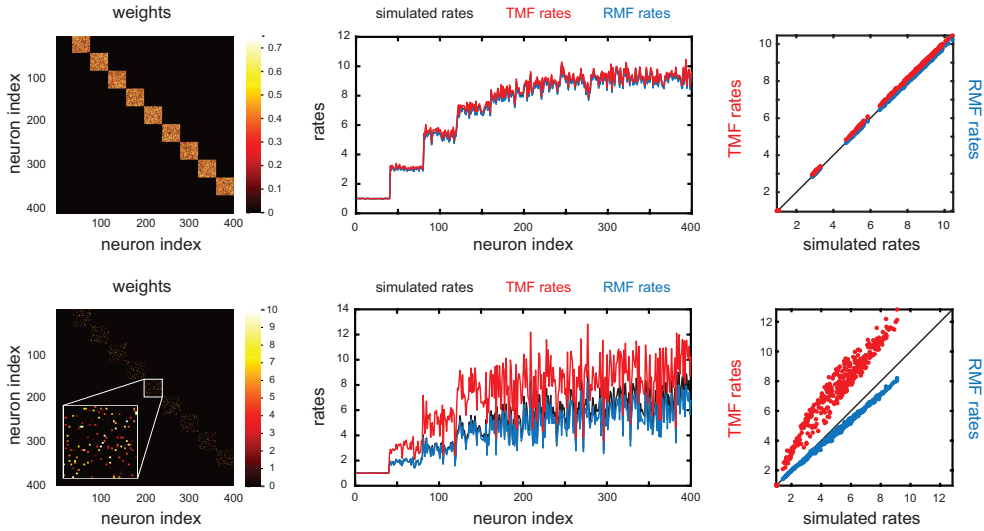}
  \caption{{\bf Feedforward network.}
  RMF models better capture the stationary firing rate of finite LGN networks than TMF  models for multilayered feedforward network with sparse, large, synaptic weights.
  {\bf Top row:} LGL network of $10$ layers of $40$ counting-synapse neurons, each receiving spikes from $40$ randomly sampled neurons from the previous layer (except the driving layer), via identically uniformly distributed synaptic weights. 
  {\bf Bottom row:}  LGL network of $10$ layers of $40$ neurons, each receiving spikes from $3$ randomly sampled neurons from the previous layer (except the driving layer), via identically uniformly distributed synaptic weights.
  {\bf Left:} Synaptic structure. {\bf Middle:} Numerical stationary rates obtained from discrete-event simulations ($10^7$ spiking events) and from iterated schemes for the RMF model and TMF  model ($20$ iterations). {\bf Right:} Scatter plots comparing the faithfulness of the TMF  model and that of the RMF model.
  }
  \label{fig:SFgraph}
\end{figure}

Moreover, we present numerical results emphasizing when the first-order RMF approach approximates
finite LGL networks more faithfully than TMF  networks.
We consider two types of counting-synapse models (see \Cref{rem:csm}): unstructured recurrent networks
in \Cref{fig:ERgraph} and multilayered feedforward networks in \Cref{fig:SFgraph}.
For each network structure, we numerically evaluate the empirical stationary firing rates
of finite LGL networks via discrete-event simulations using the Gillespie algorithm \cite{Gillespie:1977}.
Then, we compare these empirical rates with the RMF rates and the TMF  rates,
which are obtained by numerically solving the self-consistency equations \eqref{eq:sysMI} and \eqref{eq:sysTMF }, respectively.
These solutions are computed via the---empirically unconditionally converging--- iteration scheme
deduced from the self-consistency equations.
As expected from our discussion of the TMF  limit, \Cref{fig:ERgraph}a and \Cref{fig:SFgraph}a
show that RMF models closely mirror TMF  models for LGL networks with weak interactions. e.g., with $\mu_{ij} / b_i \ll 1$.
Moreover, TMF  models, as well as RMF models, are both faithful approximations of 
the corresponding finite LGL networks, which exhibit weak correlations by construction.
Because of the role played by the interaction-mediating terms in the TMF  and RMF \emph{ans\"atze},
we expect that RMF models become distinct from TMF  models for network structure involving large synaptic weights,
e.g., with $\mu_{ij}/b_i > 1$. However, we expect RMF model to be faithful only when the Poisson Hypothesis
is a good modeling assumption, i.e., when spike trains are nearly Poissonian and independent across neurons.
For large synaptic weights, such a behavior is the hallmark of sparsely connected networks.
\Cref{fig:ERgraph}b and \Cref{fig:SFgraph}b confirm that RMF networks better predict the firing
rates of LGL networks with large, sparse, synaptic connections.
Further numerical simulations reveal that RMF models comparatively better capture
feedforward networks than recurrent networks (see \Cref{table:comp}). This is due to the presence of cycles in the network structure,
which promotes correlation and gradually invalidates the Poisson Hypothesis \cite{Melamed:1979}.
Accounting for networks with large, sparse, synaptic connections but strong recurrent structure, e.g.,
nearest-neighbor lattice graph, requires to consider higher-order RMF models  (see \Cref{sec:future}).

\begin{table}[ht]
\caption{Comparison of the relative errors of the mean firing rates in the TMF  limit and in the RMF limit for different network structures.
The RMF limit comparatively better captures the mean firing rates for LGL networks with large, sparse, synaptic connections.}
\begin{center}
  \begin{tabular}{|c|c|c|}
    \hline
  Network model & TMF  error & RMF error \\ \hline
 Complete unstructured & $<1\%$ & $< 1\%$   \\ 
 Sparse unstructured  & $5\%$ & $2\%$ \\ \hline
 Complete feedforward  & $2\%$ & $1\%$ \\ 
 Sparse feedforward & $44\%$ & $7\%$ \\
    \hline
  \end{tabular}
\end{center}
\label{table:comp}
\end{table}

\begin{figure}
  \centering
  \includegraphics[width=\textwidth]{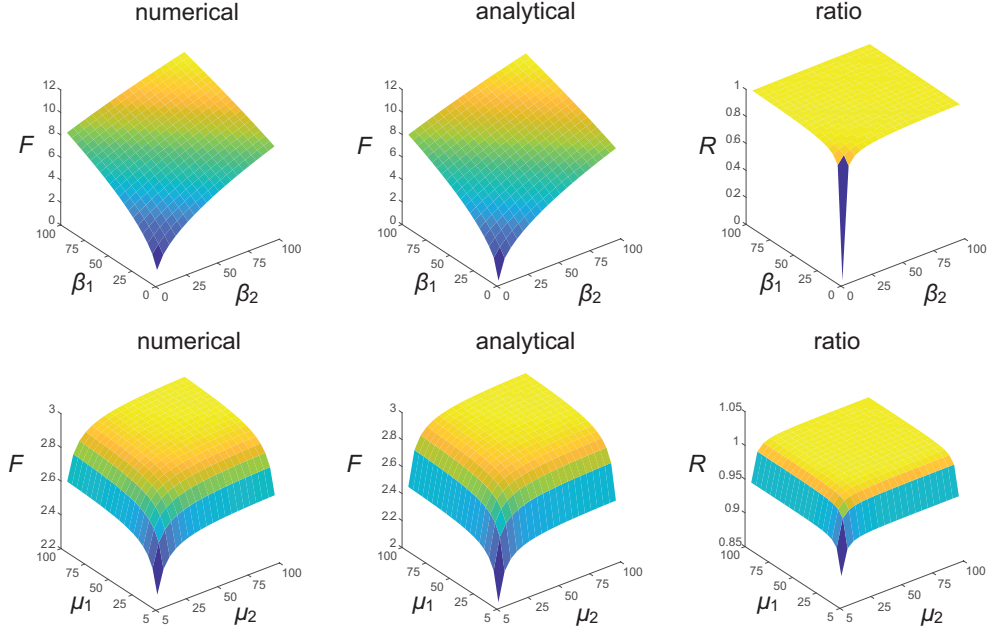}
\caption{{\bf Transfer function.} Asymptotic regime of the transfer function $F$ for a neuron
with reset value $r=1$, base level $b=1$, time constant $\tau=1$, and receiving spikes from two other neurons.  
{\bf Top row.} Numerical and analytical approximation of the transfer function $F$ for large input rates with
synaptic weights $\mu_1=\mu_2=1$. A purely excitatory LGL network is always stable because its transfer
function grows sublinearly as a function of its input rates. {\bf Bottom row.}  Numerical and analytical
approximation of the transfer function $F$ for large synaptic weights with input rates $\beta_1=\beta_2=1$.
The transfer function saturates for large synaptic weights showing the non-symmetric role of synaptic weights and input rates.}
   \label{fig:Transfer}
\end{figure}


\subsection{Asymptotic transfer functions} 
A key quantity determining the behavior of neural networks is the neuronal rate-transfer function,
which relates the output stationary rate of a neuron to its stationary input rates and its synaptic weights. 
For instance, neurons modeled via Hawkes processes---which neglect reset mechanisms---have rate-transfer functions
that depend linearly on the rates of interaction $\mu_{ij} \beta_j$.
Such a linear dependence of rate-transfer functions implies that Hawkes neural networks are prone
to explosion in the absence of inhibition, and thus fail to admit a stationary regime.
By contrast, LGL networks are unconditionally stable, indicating that the LGL rate-transfer
function must grow sublinearly with input rates.
Within the RMF framework, the rate-transfer function of a neuron $i$, denoted $F_i$, is given by the self-consistency
equations \eqref{eq:sysMI} and can be expressed as
\begin{eqnarray}\label{eq:transfer}
F_i({\bm \beta},{\bm \mu})= \left(\int_{0}^{\tau_i} \exp{\left(-\tilde{h}_i(v)-\sum_{j \neq i} \beta_j \tilde{h}_{ij}(v) \right)}  \tilde{l}_i(v)   \, dv \right)^{-1}  \, ,  \nonumber
\end{eqnarray}
where the auxiliary functions $\tilde{h}_{ij}$, $\tilde{h}_{i}$ and $\tilde{l}_{i}$ are defined as:
\begin{eqnarray}
\hspace{25pt} \tilde{h}_{ij}(v) = \int_0^v  \frac{ 1-e^{-\mu_{ij}u}}{1-u/\tau_i}  \, du \, , \quad \tilde{h}_{i}(v) = \frac{b_i}{\tau_i}\int_0^v  \frac{u}{1-u/\tau_i}  \, du \, , \quad \tilde{l}_{i}(v) = \frac{e^{-r_i v}}{1-v/\tau_i} \, 
\end{eqnarray}
(see Equation \ref{eq:usedeq}).
In \Cref{fig:Transfer}, we numerically compute the rate-transfer function of a neuron subjected to two spiking streams with varying input rates and varying synaptic weights.
Considering the asymptotic behavior of $F_i$ via the Laplace method in the limit of large input rates $\beta_j$ exhibits the sublinearity of $F_i$.
Specifically, observing that the function $\tilde{h}_{ij}$ admits its minimum over $(0,\tau_i)$ in $0$, the Laplace method implies that in the limit of large input rates, i.e., for all $\beta_j \to \infty$, we have
\begin{eqnarray}
F_i({\bm \beta},{\bm \mu})^{-1} \sim  e^{-\tilde{h}_i(0)-\sum_{j \neq i} \beta_j \tilde{h}_{ij}(0) } \tilde{l}_i(0) \int_0^{\infty} e^{-\sum_{j \neq i} \beta_j \tilde{h}''_{ij}(0)v^2/2 } dv \, .
\end{eqnarray}
The evaluation of the Gaussian integral with $\tilde{h}''_{ij}(0)=\mu_{ij}$ yields the asymptotic behavior
\begin{eqnarray}
F_i({\bm \beta},{\bm \mu}) = \bigg(\frac{2}{\pi} \sum_{j \neq i} \mu_{ji} \beta_j \bigg)^{1/2} + o\left(  \sqrt{\beta_1} , \ldots,   \sqrt{\beta_K} \right) \, ,
\end{eqnarray}
showing that LGL rate-transfer functions scale with the square-root of the input rates, which is consistent with the reset-enforced unconditional stability of LGL networks. 
Such a sublinear scaling is the same as that of the counting-neuron model because relaxation becomes irrelevant at high firing rate, i.e., when interspike intervals become shorter than the relaxation time constant $\tau_i$ (see \Cref{fig:Transfer}).

Finally, by contrast with Hawkes model---and with LGL neurons in the TMF  limit---, the rate-transfer function $F_i$ exhibits a distinct nonlinear dependence on the synaptic weights at fixed input rates.
Indeed,  we have
\begin{eqnarray}
\tilde{h}_{ij}(v) =  -\sum_{j \neq i} \frac{1}{\mu_{ij}} + O\left( 1/ \mu_1^2, \ldots, 1/ \mu_K^2\right) \, ,
\end{eqnarray}
Then, taking the limit $\mu_{ij}\to \infty$ in \eqref{eq:transfer} shows that the rate-transfer function  $F_i$ asymptotically saturates to the upper bound 
\begin{eqnarray}
\bar{\beta}_i = \frac{e^{-a} a^b }{\tau_i \gamma (b,a)} \quad \mathrm{with} \quad a=\tau_i(b_i-r_i) \quad \mathrm{and} \quad b=\tau_i \left( b_i + \sum_{j \neq i} \beta_j\right) \, .
\end{eqnarray}
This upper bound simplifies to $\bar{\beta}_i = b_i + \sum_{j \neq i} \beta_j$ when the reset level and the base level identical: $b_i=r_i$.
Finally, accounting for first-order corrections shows that for large synaptic weights, we have the scaling
\begin{eqnarray}
F_i({\bm \beta},{\bm \mu})^{-1} \sim \int_{0}^{\tau_i} e^{\frac{av}{\tau_i}+\sum_{j \neq i} \frac{\beta_j}{\mu_{ij}}} \Big( 1-v/\tau_i\Big)^{b-1}  \, dv =
 e^{\sum_{j \neq i} \frac{\beta_j}{\mu_{ij}}} / \bar{\beta}_i  \, ,
\end{eqnarray}
so that the rate-transfer function $F_i$ has the following asymptotic behavior
\begin{eqnarray}
F_i({\bm \beta},{\bm \mu}) = \bar{\beta}_i \left(1- \sum_{j \neq i} \frac{\beta_j}{\mu_{ij}}\right) + o\left( 1/ \mu_1, \ldots, 1/ \mu_K \right) \, .
\end{eqnarray}
This saturating behavior is a distinct feature of RMF limit models (see \Cref{fig:Transfer}).
Informally, in the limit of infinite weights, each spiking input triggers a spiking output leading to an effective quasi-linear transfer function.
By contrast, in the TMF  limit, increasing synaptic weight $\mu_{ij}$ is equivalent to increasing input rate $\beta_j$, so that the rate-transfer function diverges in the limit of large synaptic weights. 
This failure to capture saturation in the TMF  limit explains why RMF models outperforms TMF models for sparse networks with large synaptic weights.


\section{Proofs}\label{sec:proof}

This section contains the proofs of the key results of our RMF computational framework.
\Cref{sec:markov} contains the Markovian analysis justifying the Harris ergodicity of LGL networks
and their finite replica versions (\Cref{sec:Harris}) and the derivation of the RMF \emph{ansatz} (\Cref{sec:funcRepModel}).
\Cref{sec:relaxing} proves \Cref{th:relaxingNeuron} solving the RMF \emph{ansatz}
for the relaxing-neuron model with synaptic heterogeneity.


\subsection{Markovian analysis}\label{sec:markov}

Establishing Harris ergodicity, as well as deriving the RMF \emph{ansatz},
essentially rely on the Markovian analysis of the infinitesimal generators of LGL networks and their finite replica versions.


\subsubsection{Harris ergodicity}\label{sec:Harris}

To prove Harris ergodicity, it is enough to exhibit a regeneration set that is positive recurrent
for $\{\bm{\Lambda}_n \}_{n \in \mathbb{Z}}$, the embedded Markov chain of $\{ {\bm \lambda}(t) \}_{t \in \mathbb{R}}$,
defined as $\{\bm{\Lambda}_n \}_{n \in \mathbb{Z}} = \{\bm{\lambda}_{T_n} \}_{n \in \mathbb{Z}}$,
where $T_n$ denotes the ordered sequence of jumps such that almost surely $T_0 \leq 0 < T_1$ and $T_n<T_{n+1}$. 
In \cite{Robert:2016}, Robert and Touboul exploit the Poissonian embedding of intensity-based network models
\cite{Kerstan:1964} to show that all compact sets $R_{\lambda_0} = [0,\lambda_0]^K$ with 
\begin{eqnarray}
\lambda_0> \max_i \left( \sum_j \mu_{ji} + b_i \right) \, ,
\end{eqnarray}
are regeneration sets for $\{\bm{\Lambda}_n \}_{n \in \mathbb{Z}}$.
Briefly, regeneration happens when each neuron spikes consecutively and ``spontaneously'', i.e.,
in the base-rate component of the Poissonian embedding, which is well defined as long as $\min_i \inf_t \lambda_i(t) = \min_i r_i>0$. 
Given an initial state ${\bf{\Lambda}}_{0}$ in $R_{\lambda_0}$, such a sequence of $K$ transitions yields
a state ${\bf{\Lambda}}_{K}$ that is independent of ${\bf{\Lambda}}_{0}$, while happening with finite, albeit small, probability. 
Knowing the regenerative property of compact sets $R_{\lambda_0}$, the Harris ergodicity of
$\{ {\bm \lambda}(t) \}_{t \in \mathbb{R}}$ follows from the existence of positive recurrent compact sets
under the assumption of a non-explosive behavior. The non-explosive nature of the dynamics, as well as the positive
recurrence of compact sets $R_{\lambda_0}$ for large enough $\lambda_0$, are established by verifying the
following Foster-Lyapunov drift condition for exponential scale functions
$V_u({\bm \lambda}) = \exp{ \left(u \sum_i \lambda_i \right)}$:


\begin{proposition}\label{th:FosterLyapunov}
For $u>0$ and $c>0$, there are real numbers $d>0$ and $l>0$ such that for all
$\lambda_0>l$ and for all $\lambda$ in $\mathbb{R}_+^K$, we have
\begin{eqnarray}
\mathcal{A}[V_u]({\bm \lambda}) \leq -c V({\bm \lambda}) + d\mathbbm{1}_{R_{\lambda_0}}({\bm \lambda}) \, .
\end{eqnarray}
\end{proposition}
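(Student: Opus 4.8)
The plan is to compute $\mathcal{A}[V_u](\bm{\lambda})$ explicitly and bound each contribution, separating the relaxation (drift) part from the spiking (jump) part. Applying the generator \eqref{eq:infGen} to $V_u(\bm{\lambda}) = e^{u\sum_i \lambda_i}$, I use that $\partial_{\lambda_i} V_u = u V_u$ for the drift terms, and that a spike of neuron $i$ shifts the argument by $\bm{\mu}_i(\bm{\lambda})$, whose $j$-th component is $\mu_{ji}$ for $j \neq i$ and $r_i - \lambda_i$ for $j = i$. Hence the jump factor for neuron $i$ is
\begin{eqnarray*}
V_u(\bm{\lambda} + \bm{\mu}_i(\bm{\lambda})) - V_u(\bm{\lambda}) = V_u(\bm{\lambda}) \left( e^{u\left( r_i - \lambda_i + \sum_{j \neq i} \mu_{ji} \right)} - 1 \right) \, .
\end{eqnarray*}
Collecting terms, I obtain
\begin{eqnarray*}
\mathcal{A}[V_u](\bm{\lambda}) = V_u(\bm{\lambda}) \sum_i \left[ \frac{u(b_i - \lambda_i)}{\tau_i} + \lambda_i \left( e^{u\left( r_i - \lambda_i + \sum_{j \neq i}\mu_{ji}\right)} - 1 \right) \right] \, .
\end{eqnarray*}
The strategy is then to show that the bracketed sum, call it $\Psi_u(\bm{\lambda})$, is bounded above by $-c$ once some coordinate $\lambda_i$ is large, so that the negative drift dominates outside a compact set.

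Next I would analyze the sign of $\Psi_u(\bm{\lambda})$ coordinate by coordinate. The key observation is that, since $r_i \leq b_i$ and the reset removes the current intensity $\lambda_i$, the exponent $r_i - \lambda_i + \sum_{j\neq i}\mu_{ji}$ becomes strongly negative when $\lambda_i$ is large. Consequently the jump factor $e^{u(\cdots)} - 1$ approaches $-1$, and the spiking term $\lambda_i(e^{u(\cdots)}-1)$ behaves like $-\lambda_i$, a linear negative drift. Meanwhile the relaxation term $u(b_i - \lambda_i)/\tau_i$ is also of order $-u\lambda_i/\tau_i$ for large $\lambda_i$. Thus each coordinate contributes a term that tends to $-\infty$ as $\lambda_i \to \infty$, and is bounded above uniformly in the region where all $\lambda_i$ stay bounded. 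Formally, I would fix $c > 0$, and argue that there is a threshold $l$ such that whenever $\max_i \lambda_i > l$ the corresponding summand drives $\Psi_u(\bm{\lambda}) \leq -c$; on the complementary compact region $R_{\lambda_0} = [0,\lambda_0]^K$ (with $\lambda_0 \geq l$ chosen as in the regeneration-set condition), the continuous function $\mathcal{A}[V_u](\bm{\lambda}) + cV_u(\bm{\lambda})$ attains a finite maximum, which I take as $d$. This yields the stated inequality $\mathcal{A}[V_u](\bm{\lambda}) \leq -cV_u(\bm{\lambda}) + d\,\mathbbm{1}_{R_{\lambda_0}}(\bm{\lambda})$.

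The main obstacle is controlling the interplay between the positive synaptic contributions $\sum_{j\neq i}\mu_{ji}$ inside the exponent and the negative reset-driven term $-\lambda_i$. Because $\mu_{ji} \geq 0$, the exponent could in principle be positive when $\lambda_i$ is moderate, making the jump term positive and competing against the drift; I must ensure the threshold $l$ is chosen large enough (depending on $u$, the $\mu_{ji}$, $b_i$, $r_i$, and $\tau_i$) that the $-u\lambda_i$ contribution in the exponent overwhelms the fixed synaptic offset. The cleanest way is to note that for each $i$ the summand is a function of $\lambda_i$ alone (given the other coordinates enter only through the positive prefactor $V_u$ and the bounded synaptic constant), and that $\sup_{\lambda_i \geq 0}\left[ u(b_i-\lambda_i)/\tau_i + \lambda_i(e^{u(r_i - \lambda_i + \sum_{j\neq i}\mu_{ji})}-1)\right]$ is finite while the expression $\to -\infty$; a careful but routine estimate of where this one-dimensional profile crosses $-c$ fixes $l$. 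The remaining verification that the maximum over the compact set is finite is immediate by continuity.
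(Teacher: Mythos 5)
Your proposal is correct and takes essentially the same approach as the paper's proof: both compute $\mathcal{A}[V_u]$ exactly, exploit the fact that each coordinate's relaxation-plus-jump summand is bounded above on $[0,\infty)$ yet decreases linearly to $-\infty$ (the jump factor tending to $-1$), and obtain $d$ by bounding the continuous function $\mathcal{A}[V_u]+cV_u$ on the compact set $R_{\lambda_0}$. The paper merely makes your ``routine estimate'' explicit via the elementary inequality $\max_{\lambda \geq 0}\lambda e^{-u\lambda}=1/(ue)$, which yields a bound linear in $\sum_i\lambda_i$ and the closed-form threshold $l=u\sum_i b_i/\tau_i+u^{-1}\sum_i e^{u(\sum_{j\neq i}\mu_{ji}+r_i)-1}+c$; when completing your max-coordinate variant, just make sure the threshold also absorbs the (finite, global) suprema of the other coordinates' summands, which can be positive.
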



\begin{proof}
On $\mathbb{R}_+^K$, the infinitesimal increment of the scale function $V_u$ satisfies
\begin{eqnarray}
\mathcal{A}[V_u](\bm{\lambda}) &=&  \sum_i  \frac{b_i - \lambda_i}{\tau_i} \, u V_u(\bm{\lambda}) +  \sum_i \left( e^{u\left(\sum_{j \neq i} \mu_{ji} + r_i-\lambda_i\right)}-1 \right) \lambda_i V_u(\bm{\lambda}) \, , \\
&\leq &
\left(  u \sum_i  \frac{b_i}{\tau_i}  +  \frac{1}{u} \sum_i e^{u\left(\sum_{j \neq i} \mu_{ji} + r_i\right)-1}- \sum_i \lambda_i \right)V_u(\bm{\lambda}) \, .
\end{eqnarray}
where we used the facts that $\lambda_i \geq 0$ and that $\max_\lambda e^{-\lambda_i u}\lambda = 1/u e$ for $u>0$.
Given $c>0$, the compact set 
\begin{eqnarray}
R_c = \left\{ {\bm \lambda} \in \mathbb{R}^K_+ \, \big \vert  \, \sum_i \lambda_i \leq u \sum_i  \frac{b_i}{\tau_i}  +  \frac{1}{u} \sum_i e^{u\left(\sum_{j \neq i} \mu_{ji} + r_i\right)-1} + c \right\}
\end{eqnarray}
is such that $\mathcal{A}[V_u] \leq -c V_u$ outside $R_c$. 
Thus, choosing
\begin{eqnarray}
l = u \sum_i  \frac{b_i}{\tau_i}  +  \frac{1}{u} \sum_i e^{u\left(\sum_{j \neq i} \mu_{ji} + r_i\right)-1} + c \, , 
\end{eqnarray}
implies that, for $\lambda_0> l$, $\mathcal{A}[V_u] \leq -c V_u$ outside of $R=[0,\lambda_0]^K \supset R_c$. 
Moreover, using the boundedness of $V_u$ on compact sets to choose
\begin{eqnarray}
d =\lambda_0  \sup_{\bm{\lambda} \in R}V_u(\bm{\lambda}) < \infty \, ,
\end{eqnarray}
we finally check that $\mathcal{A}[V_u] \leq -c V_u + d\mathbbm{1}_{R}$ on $\mathbb{R}_+^K$.
\end{proof}

In \cite{MeynTweedie:1993}, Meyn and Tweedie show that the Foster-Lyapunov drift condition of Proposition \ref{th:FosterLyapunov} has two immediate implications:
$i)$ As the functions $V_u$ are positive and norm-like, i.e. $\lim_{{\bm \lambda} \to \infty} V_u({\bm \lambda}) = \infty$ for $u>0$, Proposition \ref{th:FosterLyapunov} directly implies that the Markovian dynamics is non-explosive.
$ii)$ As the dynamics is non-explosive and noting that $V_u \geq 1$ on $\mathbb{R}^K_+$, a set $R_{\lambda_0}$ satisfying Proposition \ref{th:FosterLyapunov} is positive recurrent, and for large enough $\lambda_0$, $R_{\lambda_0}$ is a regeneration set as well, implying the Harris ergodicity of the Markov chain $\{ \lambda(t) \}_{t \in \mathbb{R}}$.


\subsubsection{Functional equations for replica models}\label{sec:funcRepModel}

Following the exact same steps as for the proof of \Cref{th:exactPDE}, Dynkin's formula applied at stationarity allows one to functionally characterize the stationary state of the $M$-replica model as stated in \Cref{th:exactPDErep}.

\begin{proof}[Proof of \Cref{th:exactPDErep}]
Given a subset of replica indices $S \subset  \{1, \ldots, M\}$, let us express the infinitesimal generator $\mathcal{A}$ defined by expression \eqref{eq:infGen2} for the $M$-replica model when acting on the exponential function
\begin{eqnarray}
f_{\bm{u}}(\bm{\lambda}) = \exp{\left(\sum_{i=1}^{K} \sum_{m \in S} u_{m,i} \lambda_{m,i}\right)} \, .
\end{eqnarray}
We obtain the relation
\begin{eqnarray}
\mathcal{A}[f_{\bm{u}}](\bm{\lambda}) 
&=& \sum_{i=1}^{K} \sum_{m \in S} \left( \frac{b_i - \lambda_{m,i}}{\tau_i} \right) u_{m,i} f_{\bm{u}}(\bm{\lambda}) \nonumber\\
&+& \sum_{i=1}^{K} \sum_{m \in S}  \frac{1}{\vert V_{m,i} \vert} \sum_{\bm{v} \in V_{m,i}} \left( e^{u_{m,i}(r_i-\lambda_{m,i}) + \sum_{j \neq i, v_j \in S} u_{v_j\!,j} \mu_{ji} }- 1 \right) f_{\bm{u}}(\bm{\lambda}) 
\lambda_{m,i} \nonumber\\
&+& \sum_{i=1}^{K} \sum_{m \notin S}  \frac{1}{\vert V_{m,i} \vert} \sum_{\bm{v} \in V_{m,i}}\left( e^{\sum_{j \neq i, v_j \in S} u_{v_j\!,j} \mu_{ji} } - 1 \right) f_{\bm{u}}(\bm{\lambda}) \lambda_{m,i} \, . 
\end{eqnarray}
By Dynkin's formula, we have $\Exp{\mathcal{A}[f_{\bm{u}}](\bm{\lambda}) } = 0$ for stationary $M$-replica dynamics, which implies that
\begin{eqnarray}\label{eq:replicaS}
0
&=& \sum_{i=1}^{K} \sum_{m \in S} \left( \frac{b_i u_{m,i}}{\tau_i} L(\bm{u}) -  \frac{u_{m,i}}{\tau_i}  \partial_{u_{m,i}}L(\bm{u}) \right) \nonumber\\
&+& \sum_{i=1}^{K} \sum_{m \in S}  \frac{1}{\vert V_{m,i} \vert} \sum_{\bm{v} \in V_{m,i}} \left( e^{\left( u_{m,i}r_i + \sum_{j \neq i, v_j \in S} u_{v_j\!,j} \mu_{ji}  )\right)}- 1 \right)  \partial_{u_{m,i}}L(\bm{u}) \big \vert_{u_{m,i}=0} \nonumber\\
&&\hspace{1cm}+ \sum_{i=1}^{K} \sum_{m \notin S}  \frac{1}{\vert V_{m,i} \vert} \sum_{\bm{v} \in V_{m,i}}\left( e^{\left(\sum_{j \neq i, v_j \in S} u_{v_j\!,j} \mu_{ji} \right)} - 1 \right) \Exp{\lambda_{m,i} f_{\bm{u}}(\bm{\lambda})}\ ,
\end{eqnarray}
where we use the notation 
\begin{eqnarray}
L(\bm{u}) = \Exp{\exp{\left(\sum_{i=1}^{K} \sum_{m \in S} u_{m,i} \lambda_{m,i}\right)}} \, .
\end{eqnarray}
Specifying the above relation for $S= \{1, \ldots, M\}$ yields the PDE of \Cref{th:exactPDErep}.
\end{proof}


In the remaining of this section, we justify relation \eqref{eq:prepAnsatz} used for heuristically
deriving the RMF \emph{ansatz} of \Cref{def:relaxingAnsatz}.
Considering only the first replica $S=\lbrace 1\rbrace$, and denoting $u_{1,j}=u_j$ for simplicity,
relation \eqref{eq:replicaS} becomes
\begin{eqnarray}
0
&=& \sum_{i=1}^{K}  \left( \frac{b_i u_{i}}{\tau_i} L(\bm{u}) -  \frac{u_{i}}{\tau_i}  \partial_{u_{i}}L(\bm{u}) \right) \\
&+& \sum_{i=1}^{K}  \frac{1}{\vert V_{i,1} \vert} \sum_{\bm{v} \in V_{i,1}} \left( e^{\left(\sum_{j \neq i, v_j =1} u_{j} \mu_{ji}  + u_{i}r_i \right)} \!-\! 1 \right)  \partial_{u_{i}}L(\bm{u}) \big \vert_{u_{i}=0} \nonumber \\
&+& \sum_{i=1}^{K} \sum_{m >1}  \frac{1}{\vert V_{m,i} \vert} \sum_{\bm{v} \in V_{m,i}}\left( e^{\left(\sum_{j \neq i, v_j =1} u_{j} \mu_{ji} \right)} \!-\! 1 \right) \Exp{\lambda_{m,i} f_{\bm{u}}(\bm{\lambda})} \, .\nonumber
\end{eqnarray}
As $\bm{v} \in V_{i,1}$ implies $v_j \neq 1$ for all $j \neq i$, the exponent in the second term of the right-hand side is actually independent of $\bm{v}$ so that we have:
\begin{eqnarray}
0
&=& \sum_{i=1}^{K}  \left( \frac{b_i u_{i}}{\tau_i} L(\bm{u}) -  \frac{\lambda_{i}}{\tau_i}  \partial_{u_{i}}L(\bm{u}) \right) \\
&+& \sum_{i=1}^{K}   \left( e^{u_{i}r_i}- 1 \right)  \partial_{u_{i}}L(\bm{u}) \big \vert_{u_{i}=0} \nonumber\\
&+& \sum_{i=1}^{K} \sum_{m >1}  \frac{1}{\vert V_{m,i} \vert} \sum_{\bm{v} \in V_{m,i}}\left( e^{\left(\sum_{j \neq i, v_j =1} u_{j} \mu_{ji} \right)} \!-\! 1 \right) \Exp{\lambda_{m,i} f_{\bm{u}}(\bm{\lambda})} \, .\nonumber
\end{eqnarray}
By exchangeability of replicas, the value of the expectation term above is independent of $m>1$.
Then, conditionally to neuron $i$ spiking, let us estimate the sum:
\begin{eqnarray}
\sum_{m >1}  \frac{1}{\vert V_{m,i} \vert} \sum_{\bm{v} \in V_{m,i}}\left( e^{\left(\sum_{j \neq i, v_j =1}u_{j} \mu_{ji} \right)} - 1 \right)  
= 
\frac{(M-1)S_{i,2}}{\vert V_{i,2} \vert} \ ,
\end{eqnarray}
where $S_{i,2}$ collects the terms corresponding to interactions with the second replica:
\begin{eqnarray}
S_{i,2} = \sum_{\bm{v} \in V_{i,2}}\left( e^{\sum_{j \neq i, v_j =1}u_{j} \mu_{ji} } - 1 \right) \, .
\end{eqnarray}
To further estimate $S_{i,2}$, observe that the set $V_{i,2}$ can be partitioned according to how many of its components are equal to one.
Specifically, we have the partition
\begin{eqnarray}
V_{i,2} = V^{(0)}_{i,2} \cup \dots \cup V^{(K-1)}_{i,2} \, ,
\end{eqnarray}
where the non-overlapping sets $V^{(k)}_{i,2}$, $0 \leq k \leq K-1$, are defined as
\begin{eqnarray}
\hspace{20pt}
V^{(k)}_{i,2}
=
\big \lbrace  \bm{v} \in V_{i,2} \, \big \vert \,  \vert \lbrace v_j = 1 \rbrace \vert =k \big \rbrace  \quad \mathrm{with} \quad \Big \vert V^{(k)}_{i,2} \Big \vert =  \binom{K}{k} (M-2)^{K-1-k}\, .
\end{eqnarray}
Noticing that  $\exp{\left(\sum_{j \neq i, v_j =1}u_{j} \mu_{ji} \right)} - 1=0$ on $V^{(0)}_{i,2}$,
we have
\begin{eqnarray}
S_{i,2}
&=&
\sum_{k=1}^{K-1}  \sum_{\bm{v} \in V^{(k)}_{i,2}} \left( e^{\sum_{j \neq i, v_j =1}u_{j} \mu_{ji} } - 1 \right) \nonumber\\
&=&
(M-2)^{K-2}\sum_{j \neq i}  \left( e^{u_{j} \mu_{ji}} -1 \right) + (M-2)^{K-3}\sum_{j,k \neq i}  \left( e^{u_{j} \mu_{ji}+u_{k} \mu_{ki}} -1 \right) + \ldots
\end{eqnarray}
Remembering that $\vert V_{m,i} \vert = (M-1)^{K-1}$, we conclude that when $M \to \infty$, we have
\begin{eqnarray}
\hspace{5pt} \sum_{m >1}  \frac{1}{\vert V_{m,i} \vert} \sum_{\bm{v} \in V_{m,i}}\left( e^{\left(\sum_{j \neq i, v_j =1}u_{j} \mu_{ji} \right)} - 1 \right)  
= 
\sum_{j \neq i}  \left( e^{u_{j} \mu_{ji}} -1 \right) + O(1/M) \, ,
\end{eqnarray}
which justifies relation \eqref{eq:prepAnsatz} under assumption that the involved expectation terms remain bounded when $M \to \infty$.


\subsection{Solutions to the RMF \emph{ansatz}}\label{sec:relaxing}

Solving the RMF \emph{ansatz} for the relaxing-neu\-ron model with synaptic heterogeneity is more involved
than for the counting-neuron model.  This is primarily due to the fact that in the presence of relaxation, 
stochastic intensities have a continuous state space, which requires to consider MGFs instead of PGFs.
The defining property of MGFs is provided by the criterion of complete monotonicity.
To prove \Cref{th:relaxingNeuron}, we first show that the RMF \emph{ansatz} admits a unique smooth solution (\Cref{sec:uniquesmooth}).
Then, we show that this smooth solution is completely monotone (\Cref{sec:compmon}).
Finally, we show that the condition of normalization for smooth solutions reduces
to the announced set of equations for the mean neuronal intensities, which admits at least one solution (\Cref{sec:relaxsol}).


\subsubsection{Uniqueness of smooth solutions}\label{sec:uniquesmooth}

Just as for the counting-neuron model, there is a unique smooth solution to the type 
of ODEs intervening in the RMF {\emph ansatz} for the relaxing-neuron model with synaptic heterogeneity.
This is stated in the following proposition:

\begin{proposition}\label{prop:analysis1}
Let $f$ and $g$ be real-valued functions in $C^{n+1}(\mathbb{R})$ with $n \geq 1$ 
and $\tau$ a positive real number such that $f(-\tau)>0$, then the ODE
\begin{eqnarray}\label{eq:analysisODE}
\left( 1+ \frac{u}{\tau}\right) L'(u)  + f(u) L(u) -  g(u) = 0 \, ,
\end{eqnarray}
admits a unique continuous solution on $\mathbb{R}$:
\begin{eqnarray}\label{eq:solAnalytical}
L(u) =   \int_{-\tau}^u e^{-\int_v^u  \frac{f(w)}{1+w/\tau} \, dw}  \frac{g(v)}{1+v/\tau} \, dv \, .
\end{eqnarray}
Moreover, this solution admits a derivative of order $n$ in $-\tau$. In particular, we have
\begin{eqnarray}
L(-\tau) = g(-\tau)/f(-\tau) \quad \mathrm{with} \quad L'(-\tau) = \frac{(g/f)'(-\tau)}{1+(\tau f(-\tau))^{-1}} \, .
\end{eqnarray}
\end{proposition}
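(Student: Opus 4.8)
The point $u=-\tau$ is the only place where the coefficient $1+u/\tau$ of $L'$ vanishes, so the plan is to treat \eqref{eq:analysisODE} as a regular linear first-order ODE on each of the two open intervals $(-\infty,-\tau)$ and $(-\tau,\infty)$ and to analyze the singular point separately. On each interval the equation reads $L'+pL=q$ with the continuous coefficients $p(u)=f(u)/(1+u/\tau)$ and $q(u)=g(u)/(1+u/\tau)$, so the integrating-factor method gives a one-parameter family of solutions: a particular solution plus a constant multiple of the homogeneous solution $L_h(u)=\exp\bigl(-\int p\bigr)$. First I would record the behavior of $L_h$ near $-\tau$: since $p(w)\sim \tau f(-\tau)/(w+\tau)$ and $f(-\tau)>0$, one gets $L_h(u)\sim |u+\tau|^{-\tau f(-\tau)}$, which blows up on either side. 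Consequently any bounded solution, hence any continuous solution, must have a vanishing homogeneous component on each side, which immediately gives uniqueness; the candidate \eqref{eq:solAnalytical} is then the unique bounded representative.

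Next I would verify that \eqref{eq:solAnalytical} is well posed and solves the ODE. Near the lower endpoint $v=-\tau$ the factor $g(v)/(1+v/\tau)$ diverges like $(v+\tau)^{-1}$, while the exponential weight $\exp\bigl(-\int_v^u p\bigr)$ decays like $(v+\tau)^{\tau f(-\tau)}$, so their product is $O\bigl((v+\tau)^{\tau f(-\tau)-1}\bigr)$, which is integrable precisely because $\tau f(-\tau)>0$. Differentiating \eqref{eq:solAnalytical} by the Leibniz rule, the boundary term at $v=u$ contributes $q(u)$ and differentiation of the exponential weight contributes $-p(u)L(u)$, so $L'+pL=q$, as required. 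The same estimate, carried out by replacing $g$ and $p$ by their values at $-\tau$ and collecting the leading power, yields $\lim_{u\to-\tau}L(u)=g(-\tau)/f(-\tau)$; this is exactly the value forced by setting $u=-\tau$ in \eqref{eq:analysisODE}, and it agrees from both sides, so $L$ is continuous across the singular point.

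Finally, the regularity claim at $-\tau$ is the crux. Setting $s=u+\tau$ and clearing denominators, the equation becomes $sL'+\tau f L=\tau g$; expanding the $C^{n+1}$ data in Taylor polynomials about $s=0$ and seeking $L=\sum_{k=0}^n L_k s^k+o(s^n)$, the coefficient of $s^k$ yields the recursion $(k+\tau f(-\tau))L_k=\text{(terms in }L_0,\dots,L_{k-1})$. The decisive point is the \emph{absence of resonance}: since $\tau f(-\tau)>0$, the factor $k+\tau f(-\tau)$ never vanishes for $k\ge 0$, so every $L_k$ is uniquely determined and the formal expansion exists to order $n$; controlling the remainder, again using $\tau f(-\tau)>0$ to dominate the homogeneous growth, shows this is a genuine order-$n$ Taylor expansion and hence that $L$ is $n$-times differentiable at $-\tau$. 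Reading off $L_0$ and $L_1$, or equivalently differentiating \eqref{eq:analysisODE} once and evaluating at $u=-\tau$, gives $L(-\tau)=g(-\tau)/f(-\tau)$ and, after simplification using $(g/f)'=(g'f-gf')/f^2$, the stated value $L'(-\tau)=(g/f)'(-\tau)/(1+(\tau f(-\tau))^{-1})$. I expect this last step---upgrading the bounded solution from merely continuous to $C^n$ at the singular point---to be the main obstacle, with the hypothesis $f(-\tau)>0$ doing all the work both for integrability in \eqref{eq:solAnalytical} and for the non-resonance of the recursion.
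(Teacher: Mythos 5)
Your proof is correct. For the uniqueness and continuity claims it coincides with the paper's own argument: both hinge on the homogeneous solution blowing up like $|1+u/\tau|^{-a}$ with $a=\tau f(-\tau)>0$ near $-\tau$ (so boundedness forces the homogeneous component to vanish on each side), and on the same exponent making the integrand of \eqref{eq:solAnalytical} of order $(v+\tau)^{a-1}$, hence integrable. Where you genuinely depart from the paper is the regularity at $-\tau$. The paper extracts $L(-\tau)$, $L'(-\tau)$, and the higher-order behavior by explicit, repeated integration by parts inside the integral representation \eqref{eq:solAnalytical}; you instead set $s=u+\tau$, rewrite the equation as $sL'+\tau f L=\tau g$, and run a Frobenius-type recursion $(k+\tau f(-\tau))L_k=\tau g_k-\tau\sum_{j=1}^{k}f_jL_{k-j}$, with non-resonance guaranteed by $\tau f(-\tau)>0$. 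Your route is cleaner and more conceptual: it isolates exactly where the hypothesis $f(-\tau)>0$ enters (integrability plus non-resonance), it produces $L_0=g(-\tau)/f(-\tau)$ and $L_1=(g/f)'(-\tau)/\bigl(1+(\tau f(-\tau))^{-1}\bigr)$ in two lines of algebra (these agree with the paper's values, since $a/(a+1)=1/(1+a^{-1})$), and it makes transparent why the attainable order of differentiability tracks the smoothness of $f$ and $g$. What the paper's heavier computation buys is that the remainder control comes out of the integration by parts automatically, whereas in your argument it remains a sketched step: you must check that the bounded solution of the remainder equation $sR'+\tau fR=\rho$ with $\rho(s)=o(s^n)$ is itself $o(s^n)$, which does follow from the kernel estimate $s^{-a}\int_0^s v^{a+n-1}\,dv=s^n/(a+n)$. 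Finally, one point of care, which applies to the paper's write-up as well: for $n\ge 2$, an order-$n$ Peano expansion does not by itself give existence of $L^{(n)}(-\tau)$. To make the $C^n$ claim airtight, apply your recursion-plus-remainder argument inductively to the differentiated equations $(1+u/\tau)L^{(k+1)}+(f+k/\tau)L^{(k)}=g_k$ (the same system that reappears in Proposition \ref{prop:analysis2}), where at each stage a first-order expansion literally is differentiability.
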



\begin{proof}
$i)$ \emph{Uniqueness.}
As $f$ and $g$ are continuous on $\mathbb{R}$,  \eqref{eq:sysLaplace} admits continuously
differentiable solutions on $(-\infty,-\tau)$ and $(-\tau,+\infty)$.
Solutions defined on $(-\tau,+\infty)$ have the generic integral expression 
\begin{eqnarray}\label{eq:solAnalysis}
L(u) = L_0 e^{-\int_0^u \frac{f(v)}{1+v/\tau} \, dv} +   \int_0^u e^{-\int_v^u  \frac{f(w)}{1+w/\tau} \, dw}  \frac{g(v)}{1+v/\tau} \, dv \, ,
\end{eqnarray}
where $L_0$ denotes the arbitrary real value taken by $L$ in zero.
The analysis of the above expression shows that solutions on $(-\tau,+\infty)$
generically have an infinite discontinuity when $u \to -\tau^+$.
In fact, we evaluate by integration by parts that
\begin{eqnarray}
\frac{1}{\tau} \int_0^u \frac{f(v)}{1+v/\tau} \, dv
= \left[ f(v)\ln{\left(1+\frac{v}{\tau} \right)}\right]_0^u - \int_0^u f'(v)\ln{\left(1+\frac{v}{\tau} \right)} \, dv\, ,
\end{eqnarray}
where the integral in the right-hand term has a finite limit when $u \to -\tau^+$.
Thus, the homogeneous part of $L$ exhibits the asymptotic behavior 
\begin{eqnarray}
e^{-\int_0^u \frac{f(v)}{1+v/\tau} \, dv} 
\sim 
c \left(1+ \frac{u}{\tau}\right)^{-a} \, , \quad u \to -\tau^+
\end{eqnarray}
where we have set the constants
\begin{eqnarray}
a=\tau f(-\tau)>0 \,  \quad \mathrm{and} \quad
c= - \tau  \int_{-\tau}^0 f'(v)\ln{\left(1+\frac{v}{\tau} \right)} \, dv \, ,
\end{eqnarray}
thereby showing that $L$ generically has an infinite discontinuity in $-\tau$. 
Factorizing the homogeneous part leads to considering $L$ under the form
 \begin{eqnarray}\label{eq:L0}
L(u) = e^{-\int_0^u \frac{f(v)}{1+v/\tau} \, dv} \left( L_0+  \int_0^u e^{-\int_v^0  \frac{f(w)}{1+w/\tau} \, dw}  \frac{g(v)}{1+v/\tau} \, dv  \right) \, .
\end{eqnarray}
For $L$ to have a finite left-limit in $-\tau$, the term in parentheses in the above expression
must vanish when $u \to -\tau^+$, which implies that one must choose 
\begin{eqnarray}
L_0 = \lim_{u \to -\tau^+} \int_u^0 e^{-\int_v^0  \frac{f(w)}{1+w/\tau} \, dw}  \frac{g(v)}{1+v/\tau} \, dv \, .
\end{eqnarray}
The above limit exists and is finite due to the asymptotic behavior of the integrand
\begin{eqnarray}
e^{-\int_v^0  \frac{f(w)}{1+w/\tau} \, dw}  \frac{g(v)}{1+v/\tau}
\sim
\frac{g(-\tau)}{c} \left(1+ \frac{v}{\tau}\right)^{a-1}  \, ,
\end{eqnarray}
where the right-hand term is integrable ($a>0$).
This shows that a continuous solution to  \eqref{eq:analysisODE} must take
a unique value $L_0$ in $0$ and is therefore uniquely characterized on $(-\tau, +\infty)$.
Moreover, inserting the integral expression for $L_0$ given by \eqref{eq:L0} into \eqref{eq:solAnalysis} 
yields the announced expression \eqref{eq:solAnalytical} for that unique solution.
Repeating the above analysis on $(-\infty,-\tau)$ rather than $(-\tau,+\infty)$ would yield
the same expression for the unique solution with a finite right-limit in $-\tau$,
showing that there is at most one continuous solution to \eqref{eq:solAnalysis} on $\mathbb{R}$.

$ii)$ \emph{Existence: continuity.} 
It is enough to show that the function $L$ defined on $\mathbb{R}\setminus \lbrace \tau \rbrace$
by \eqref{eq:solAnalytical} is continuous in $-\tau$. 
In order to compute $\lim_{u \to \tau} L(u)$, we first use integration by part to obtain
the asymptotic behavior of the exponent function in \eqref{eq:solAnalytical} when $u \to -\tau$:
\begin{eqnarray}
\frac{1}{\tau} \int_u^v \frac{f(w)}{1+v/\tau} \, dw
&=& \left[ f(w)\ln{\left(\Big \vert 1+\frac{w}{\tau}\Big \vert  \right)}\right]_u^v - \int_u^v f'(w)\ln{\left(\Big \vert 1+\frac{w}{\tau} \Big \vert \right)} \, dw \, ,\nonumber\\
&=& f(-\tau) \ln{\left(\Big \vert \frac{\tau+v}{\tau+u}\Big \vert \right)}+ o_{-\tau}(1) \, , \quad  \, \vert \tau+ v\vert < \vert \tau+ u \vert   \,  .
\end{eqnarray}
Thus we have the equivalence
\begin{eqnarray}
e^{-\int_v^u  \frac{f(w)}{1+w/\tau} \, dw} \sim \left(\frac{\tau+v}{\tau+u}\right)^a , \quad  \, 0 < \frac{\tau+v}{\tau+u}< 1 \, , \quad u \to -\tau \,  ,
\end{eqnarray}
which shows that the sought-after limit can be evaluated as:
\begin{eqnarray}
\lim_{u \to -\tau} L(u) =  \lim_{u \to -\tau} \int_{-\tau}^u \left(\frac{\tau+v}{\tau+u}\right)^a \frac{g(v)}{1+v/\tau} \, dv .
\end{eqnarray}
The leading term in the above integral can be further evaluated via integration by part
\begin{eqnarray}
 \int_{-\tau}^u \left(\frac{\tau+v}{\tau+u}\right)^a \frac{g(v)}{1+v/\tau} \, dv
&=& \frac{\tau}{(\tau+u)^a }\int_0^{\tau+u} w^{a-1} g(w-\tau) \, dw \, ,\nonumber\\
&=&  \frac{\tau}{(\tau+u)^a } \left(  \left[ \frac{w^a}{a} g(w-\tau) \right]_0^{\tau+u} - \int_0^{\tau+u} \frac{w^a}{a} g'(w-\tau) \, dw \right) \, ,
\end{eqnarray}
where the integral in the right-hand side is $O_{-\tau}(\tau+u)$.
Taking the limit $u \to -\tau$ in the remaining term yields the announced value 
\begin{eqnarray}
L(-\tau)
=
\lim_{u \to -\tau} \int_{-\tau}^u \left(\frac{\tau+v}{\tau+u}\right)^a \frac{g(v)}{1+v/\tau} \, dv
=
\lim_{u \to -\tau} \frac{\tau g(u)}{a} = \frac{ g(-\tau)}{ f(-\tau)} \, ,
\end{eqnarray}
showing that $L$ is continuous on $\mathbb{R}$.

$iii)$ \emph{Differentiability.} 
Let us first evaluate $L'(\tau)$ by Taylor expanding $L(u)$ in $-\tau$ to first order.
First, by repeated integration by parts, we obtain 
\begin{eqnarray}\label{eq:repIPP1}
\lefteqn{
\frac{1}{\tau} \int_0^u \frac{f(v)}{1+v/\tau} \, dv
= f(u)\ln{\left(\Big \vert1+\frac{u}{\tau} \Big \vert\right)} -} \nonumber\\
&& \hspace{85pt} f'(u) (\tau+u) \left(\ln{\left(\Big \vert1+\frac{u}{\tau}\Big \vert \right)} -1  \right) - \tau f'(0) + F(u) \, ,
\end{eqnarray}
where the last term $F(u)$ refers to the function continuously differentiable function
\begin{eqnarray}
F(u) = \int_0^u f''(v)(\tau+v)\left( \ln{\left(\Big \vert1+\frac{v}{\tau} \Big \vert \right)} -1 \right) \, dv \, .
\end{eqnarray}
Noticing that $F'(-\tau) =0$, we have $F(v)-F(u) =  o_{-\tau}(\tau+u)$ when $\vert \tau+v \vert < \vert \tau+u \vert$.
Moreover, Taylor expanding $f$ and $f'$ around $-\tau$ yields
\begin{eqnarray}
\lefteqn{
f(u)\ln{\left(\Big \vert1+\frac{u}{\tau}\Big \vert \right)} - f'(u) (\tau+u) \left(\ln{\left(\Big \vert 1+\frac{u}{\tau} \Big \vert \right)} -1  \right) = } \nonumber\\
&& \hspace{40pt} f(-\tau) \ln{\left(\Big \vert 1+\frac{u}{\tau} \Big \vert \right)} + f'(-\tau) (\tau+u) + o_{-\tau}(\tau+u) \, .
\end{eqnarray}
Thus, when $u \to -\tau$, $\vert \tau+v \vert< \vert \tau+u \vert$,
the first-order approximation to the exponent function in \eqref{eq:solAnalytical} is
\begin{eqnarray}
\hspace{5pt} -\frac{1}{\tau} \int_v^u \frac{f(v)}{1+v/\tau} \, dv
=
f(-\tau) \ln{\left(\Big \vert \frac{\tau+v}{\tau+u}\Big \vert \right)} + f'(-\tau)( v- u) +  o_{-\tau}(\tau+u) \, .
\end{eqnarray}
In turn, to first-order in $\tau+u$, we have the asymptotic behavior for $L(u)$
\begin{eqnarray}
L(u) = \int_{-\tau}^u \left(\frac{\tau+v}{\tau+u}\right)^a \big(1+f'(-\tau)( v- u)\big) \frac{g(v)}{1+v/\tau} \, dv  + o_{-\tau}(\tau+u) \, .
\end{eqnarray}
To write the above relation as an explicit linear approximation, we split the above expression
in three terms that we evaluate separately: $L(u)=A(u)+B(u)+C(u)$.
The linear approximation to the first term is obtained by repeated integration by part
\begin{eqnarray}\label{eq:repIPP2}
A(u) &=& \int_{-\tau}^u \left(\frac{\tau+v}{\tau+u}\right)^a \frac{g(v)}{1+v/\tau} \, dv  \, ,\\
&=& \frac{\tau g(u)}{a} -\frac{\tau g'(u)}{a(a +1)} (\tau+u) + o_{\tau+u}(\tau+u) \, , \nonumber\\
&=& L(-\tau) + \frac{\tau g'(-\tau)}{a+1} (\tau+u) + o_{\tau+u}(\tau+u) \, . \nonumber
\end{eqnarray}
while the linear approximations to the remaining terms only requires one integration by part:
\begin{eqnarray}
B(u)&=& \tau f'(-\tau) \int_{-\tau}^u \left(\frac{\tau+v}{\tau+u}\right)^a (\tau+u)\frac{ g(v)}{1+v/\tau} \, dv \, ,\\
&=& \frac{\tau^2f'(-\tau)g(-\tau)}{a}(\tau+u) + o_{\tau+u}(\tau+u) \, . \nonumber
\end{eqnarray}
\begin{eqnarray}
C(u)&=& \tau f'(-\tau) \int_{-\tau}^u \left(\frac{\tau+v}{\tau+u}\right)^a (\tau+v)\frac{ g(v)}{1+v/\tau} \, dv \, ,\\
&=& \frac{\tau^2f'(-\tau)g(-\tau)}{a+1}(\tau+u) + o_{\tau+u}(\tau+u) \, .\nonumber
\end{eqnarray}
Remembering that $a=\tau f(-\tau)$, we find the announced limit behavior
\begin{eqnarray}
\lim_{u \to - \tau} \frac{L(u)-L(-\tau)}{\tau+u} &=& \frac{\tau g'(-\tau)}{a+1}-\tau^2f'(-\tau)g(-\tau) \left(\frac{1}{a}-\frac{1}{a+1}\right) \, , \\
&=& \frac{a}{a+1} \left(\frac{g}{f} \right)'(-\tau) \, . \nonumber
\end{eqnarray}
Derivatives of higher order are obtained via similar, albeit intricate,
calculations evaluating the higher-order Taylor expansions of $L(u)$ around $-\tau$.
The maximum order for this expansion is determined by the number of times that
integration by part can be performed in step \eqref{eq:repIPP1} and step \eqref{eq:repIPP2}.
The maximum order is therefore $n-1$ for functions $f$ and $g$ in $C^{(n)}(\mathbb{R})$,
which implies that $L$ has a derivative of order $n-1$ in $-\tau$.
\end{proof}


\begin{remark}
Proposition \ref{prop:analysis1} actually holds for equations of the form
\begin{eqnarray}\label{eq:analysisODE2}
h(u+\tau) L'(u)  + f(u) L(u) -  g(u) = 0 \, ,
\end{eqnarray}
where $h$ is continuously differentiable with a single root: $h(0)=0$, $h'(0)>0$.
Knowing continuous differentiability, the value
\begin{eqnarray}
L'(-\tau)(u+\tau) =\frac{\left( g/f\right)'(-\tau)}{1+h'(0)/f(-\tau)} \, ,
\end{eqnarray}
directly follows from linearizing  \eqref{eq:analysisODE2} and from using $L(-\tau)=g(-\tau)/f(-\tau)$.
\end{remark}



\subsubsection{Complete monotonicity of the smooth solution}\label{sec:compmon}
The following lem\-ma will be the key to prove the complete monotonicity of the smooth solutions to the RMF \emph{ansatz}.

\begin{lemma}\label{lm:growth}
Let $f$ and $g$ be real-valued functions in $C^2(\mathbb{R})$ such that $f>0$, $g>0$,
and $f'<0$, $g'>0$ on an open interval $I$ containing $-\tau$.
Then, the unique continuous solution $L$ to  \eqref{eq:analysisODE} is strictly increasing on $I$.
\end{lemma}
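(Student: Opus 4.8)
The plan is to compare $L$ with the function $\phi = g/f$, which plays the role of the ``quasi-static'' solution obtained by dropping the derivative term in \eqref{eq:analysisODE}. First I would record that under the hypotheses $\phi$ is strictly increasing on $I$: since
\[
\phi'(u) = \frac{g'(u)f(u) - g(u)f'(u)}{f(u)^2},
\]
and $g' > 0$, $f > 0$, $g > 0$, $f' < 0$ on $I$, both summands in the numerator are positive, so $\phi' > 0$. Next I would rewrite \eqref{eq:analysisODE} in the suggestive form
\[
\left(1 + \frac{u}{\tau}\right) L'(u) = g(u) - f(u)L(u) = -f(u)\,D(u), \qquad D := L - \phi,
\]
so that controlling the sign of $L'$ reduces to controlling the sign of $D$ together with the known sign of $1 + u/\tau$, which is positive for $u > -\tau$ and negative for $u < -\tau$.

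The anchor for the whole argument is the behaviour at $u = -\tau$, supplied by \Cref{prop:analysis1}. There we have $L(-\tau) = g(-\tau)/f(-\tau) = \phi(-\tau)$, hence $D(-\tau) = 0$, and moreover
\[
L'(-\tau) = \frac{a}{a+1}\,\phi'(-\tau), \qquad a = \tau f(-\tau) > 0,
\]
so that $L'(-\tau) > 0$ and, crucially, $D'(-\tau) = \bigl(\tfrac{a}{a+1} - 1\bigr)\phi'(-\tau) = -\tfrac{1}{a+1}\phi'(-\tau) < 0$. Thus $D$ vanishes at $-\tau$ with strictly negative derivative, so $D < 0$ just to the right of $-\tau$ and $D > 0$ just to the left.

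I would then propagate these signs across $I$ by a first-crossing, maximum-principle-type argument. For $u > -\tau$, suppose $D$ returned to $0$ and let $u^\ast$ be the first such point; then $D < 0$ on $(-\tau, u^\ast)$ forces $D'(u^\ast) \ge 0$, while evaluating the rewritten ODE at the zero $u^\ast$ gives $D'(u^\ast) = L'(u^\ast) - \phi'(u^\ast) = -\phi'(u^\ast) < 0$, a contradiction. Hence $D < 0$ throughout $I \cap (-\tau, \infty)$. The mirror-image argument---taking the rightmost zero of $D$ in $I \cap (-\infty, -\tau)$, to the right of which $D > 0$---shows $D > 0$ throughout $I \cap (-\infty, -\tau)$. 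Feeding these signs back into $L'(u) = -f(u)D(u)/(1 + u/\tau)$, the quotient has a positive numerator over a positive denominator for $u > -\tau$, and a negative numerator over a negative denominator for $u < -\tau$; together with $L'(-\tau) > 0$ this yields $L' > 0$ on all of $I$.

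The one genuinely delicate point is the singular point $u = -\tau$, where the coefficient $1 + u/\tau$ vanishes and changes sign, so the bare ODE carries no first-order information there. It is precisely here that I would lean on the differentiability statement of \Cref{prop:analysis1} to extract the strict inequality $D'(-\tau) < 0$ that seeds the comparison on both sides. Everything else is a routine sign-chase, but the sign reversal of $1 + u/\tau$ across $-\tau$ must be tracked in tandem with the sign reversal of $D$ so that the quotient defining $L'$ stays positive on the whole interval.
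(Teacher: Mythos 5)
Your proof is correct and follows essentially the same route as the paper's: both compare $L$ to the quasi-static solution $g/f$, anchor the comparison at $u=-\tau$ using the value $L(-\tau)=g(-\tau)/f(-\tau)$ and the slope deficit $L'(-\tau)=\tfrac{a}{a+1}(g/f)'(-\tau)<(g/f)'(-\tau)$ from \Cref{prop:analysis1}, and then rule out any further crossing by a first-hitting-time contradiction (at a crossing the ODE forces $L'=0$ while $(g/f)'>0$), finally reading off the sign of $L'$ from the signs of $L-g/f$ and $1+u/\tau$ on each side of $-\tau$. Your reformulation in terms of $D=L-g/f$ is only a cosmetic repackaging of the paper's argument, not a different method.
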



\begin{proof}
If $g>0$ and $f>0$, expression \eqref{eq:solAnalytical} directly shows that $L$ remains positive on $\mathbb{R}$.
As $L$ is solution to  \eqref{eq:analysisODE2} and $f>0$ on $I$,
$L$ is increasing on $I$ if and only if $L \geq g/f$ on $(-\infty,-\tau) \cap I$ and $L(u) \leq g(u)/f(u)$ on $(-\tau,\infty) \cap I$.
Let us show that $L$ is below the curve of $g/f$ on $(-\tau,\infty) \cap I$ by contradiction. 
First, observe that by Proposition  \ref{prop:analysis1}, we know that the curve of $L$ intersects
the curve of $g/f$ in $-\tau$ with a slope $L'(-\tau)<(g/f)'(-\tau)$.
In particular, $L<g/f$ on the interval $(-\tau, -\tau+\epsilon)$ for small enough $\epsilon>0$.
Suppose there is $u$ in $I$, $u>-\tau+\epsilon$, such that $L(u)>g/f(u)$, then the set 
\begin{eqnarray}
V=\lbrace v \in I \cap (-\tau+\epsilon, +\infty ) \, \vert \, L(v)=g(v)/f(v) \rbrace
\end{eqnarray}
is non empty by continuity of $L$ and $g/f$.
Consider the first hitting time: $v_0 =\inf  V>-\tau$. 
By definition, $L$ remains below $g/f$ on $(-\tau,v_0)$ and we must have $L'(v_0) = 0$.
However, $f/g$ is a strictly increasing function when $f>0$, $g>0$, and $f'<0$, $g'>0$.
Thus, $(f/g)'(v_0) >0=L'(v_0)$ while $(f/g)(v_0) = L(v_0)$, which implies that $f/g<L$ in the left vicinity of $v_0$.
This contradicts the definition of $v_0$ as the first-hitting time.
The same argument applies on $(-\tau,\infty)$ to show that the curve of $L$ above the curve of $g/f$ on $(-\infty,-\tau)$.
\end{proof}


We are now in a position to prove a result of monotonicity for derivatives of all orders
via a simple recurrence argument, which is equivalent to the property of complete monotonicity.


\begin{proposition}\label{prop:analysis2}
Let $f$ and $g$ be real-valued functions in $C^\infty(\mathbb{R})$ such that for all $u<0$,
we have $f(u)>0$, $g(u)>0$ and $f^{(n)}(u)<0$, $g^{(n)}(u)>0$ for all $n$ in $\mathbb{N}_*$.
Then, the unique continuous solution $L$ to  \eqref{eq:analysisODE}
is such that for all $n$ in $\mathbb{N}$ and for all $u < 0$, we have $L^{(n)}(u) > 0$.
\end{proposition}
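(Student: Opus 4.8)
The plan is to prove the statement by induction on $n$, reducing it to $n+1$ successive applications of Lemma \ref{lm:growth}. The engine is the observation that differentiating the ODE \eqref{eq:analysisODE} preserves its shape: writing $L_n := L^{(n)}$ and differentiating $n$ times, one gets
\[
\left(1 + \frac{u}{\tau}\right) L_n'(u) + f_n(u) L_n(u) - g_n(u) = 0,
\]
with the \emph{same} singular coefficient $1 + u/\tau$ but shifted data $f_n(u) = f(u) + n/\tau$ and a source defined by the recursion $g_0 = g$, $g_{n+1} = g_n' - f' L_n$. Thus each derivative $L_n$ is again the unique continuous solution of an equation covered by Proposition \ref{prop:analysis1} and Lemma \ref{lm:growth}, and the whole argument amounts to checking that the sign hypotheses of Lemma \ref{lm:growth} propagate from $(f_n, g_n)$ to $(f_{n+1}, g_{n+1})$.

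First I would record the base case: since $f > 0$ and $g > 0$, the integral representation \eqref{eq:solAnalytical} shows $L = L_0 > 0$ on $\mathbb{R}$, exactly as noted at the start of the proof of Lemma \ref{lm:growth}. For the inductive step, assuming $L_k > 0$ on $(-\infty,0)$ for all $0 \le k \le n$, I would verify the four hypotheses of Lemma \ref{lm:growth} on the interval $I = (-\infty,0)$, which contains $-\tau$. The coefficient conditions are immediate: $f_n = f + n/\tau > 0$ and $f_n' = f' < 0$. The conditions on the source are where the hypotheses $f^{(m)} < 0$, $g^{(m)} > 0$ ($m \ge 1$) enter. I would establish by a parallel induction the explicit expansion $g_n = g^{(n)} - \sum_{k=0}^{n-1} c_{n,k}\, f^{(n-k)} L^{(k)}$ with strictly positive combinatorial weights $c_{n,k}$ (the recursion gives $c_{n+1,0} = c_{n,0}$, $c_{n+1,k} = c_{n,k} + c_{n,k-1}$, $c_{n+1,n} = c_{n,n-1} + 1$, all positive). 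Since each $f^{(n-k)} < 0$ (as $n-k \ge 1$) and each $L^{(k)} > 0$ by hypothesis, every summand contributes positively, so $g_n > 0$; differentiating once more and grouping terms identically gives $g_n' > 0$, now also using $L^{(n)} > 0$. With all four hypotheses in hand, Lemma \ref{lm:growth} shows $L_n$ is strictly increasing on $(-\infty,0)$, i.e. $L_{n+1} = L_n' > 0$ there, which closes the induction.

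Two technical points need care. The regularity at the singular point $u = -\tau$ must be addressed so that each $L_n$ genuinely qualifies as the unique \emph{continuous} solution to which Lemma \ref{lm:growth} applies; here I would invoke Proposition \ref{prop:analysis1}, which guarantees a derivative of order $n$ at $-\tau$ whenever $f, g \in C^{n+1}$, so that $f, g \in C^\infty$ forces $L \in C^\infty(\mathbb{R})$ and every $L_n$ is continuous through $-\tau$. The main obstacle, and the part demanding the most bookkeeping, is the sign verification for $g_n$ and especially $g_n'$: one must confirm that the weights stay positive and that after differentiation \emph{every} surviving term pairs a derivative of $f$ of order $\ge 1$ (hence negative) with a derivative of $L$ of order $\le n$ (hence positive by the inductive hypothesis), so that no term of indeterminate sign appears. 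Once this sign analysis is carried out carefully, the structured induction above delivers $L^{(n)}(u) > 0$ for all $n$ and all $u < 0$.
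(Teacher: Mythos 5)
Your proposal is correct and follows essentially the same route as the paper's proof: differentiate the ODE $n$ times to obtain the shifted system with $f_n = f + n/\tau$ and the recursive source $g_n$, verify the sign hypotheses inductively, and invoke Lemma \ref{lm:growth} at each step. The only cosmetic difference is that you establish positivity of the weights in the expansion of $g_n$ through their Pascal-type recursion, whereas the paper identifies them explicitly as binomial coefficients $\binom{n}{l+1}$ via the Leibniz formula and the hockey-stick identity; both yield the same sign argument.
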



\begin{proof}

$i)$ The first step is to exhibit a system of first-order ODEs satisfied by
the $(n\!+\!1)$-th order derivatives $L^{(n+1)}$.
Proposition \ref{prop:analysis1} directly implies that the continuous solution $L$ to 
Equation \eqref{eq:analysisODE} is in $C^\infty(\mathbb{R})$ on $\mathbb{R}$ if $f$ and $g$ are in $C^\infty(\mathbb{R})$. 
Repeated differentiation of  \eqref{eq:analysisODE} on
$\mathbb{R} \setminus \lbrace -\tau \rbrace$ shows that for all $n$ in $\mathbb{N}$, the functions $L^{(n+1)}$ satisfy
\begin{eqnarray}
\left( 1+ \frac{u}{\tau}\right) L^{(n+1)}(u)  + f_n(u)  L^{(n)}(u) -  g_n(u) = 0 \, ,
\end{eqnarray}
where we have $f_n = n/\tau+f$ and where the function $g_n$ is defined by recurrence as
\begin{eqnarray}
g_n(u) = g'_{n-1}(u) - f'(u) L^{(n-1)}(u) \, , \quad \mathrm{with} \quad g_0 = g(u) \, . 
\end{eqnarray}
Proceeding inductively, we obtain an explicit expression for $g_n$:
\begin{eqnarray}
g_n(u)=g^{(n)}(u)-\sum_{k=0}^{n-1} \frac{d^k}{du^k}\Big( f'(u) L^{(n-1-k)}(u) \Big) \, ,
\end{eqnarray}
which can by further simplified via the Leibniz formula and the hockey-stick identity 
\begin{eqnarray}\label{eq:itergn}
g_n(u)&=&g^{(n)}(u)-\sum_{k=0}^{n-1} \sum_{l=0}^{k}  {k\choose{l}} f^{(l+1)}(u) L^{(n-1-l)}(u) \, , \\
&=& g^{(n)}(u)-\sum_{l=0}^{n-1}  {n\choose{l+1}} f^{(l+1)}(u) L^{(n-1-l)}(u)  \, .
\end{eqnarray}

$ii)$ The proof then proceeds by recurrence on the order of the derivative.
We know that the unique continuous solution to \eqref{eq:analysisODE} is a positive function: $L>0$.
Suppose that $L^{(k)}>0$, for $1 \leq k \leq n$, i.e., that the functions $L^{(k)}$,
$0 \leq k \leq n-1$, are positive increasing functions on $\mathbb{R}_-$.
Formula \eqref{eq:itergn}  shows that $g_n$ is also positive increasing: $g_n>0$ and $g_n'>0$.
Then, observing that $f_n$ and $g_n$ in  \eqref{eq:analysisODE2} satisfy
the hypotheses of Lemma \ref{lm:growth} with $I=(-\infty,0)$, we conclude that $L^{(n)}$
is positive increasing on $(-\infty,0)$, i.e. $L^{(n+1)}>0$.
By recurrence, we deduce that derivatives of all order are positive: $L^{(n)}>0$ on $(-\infty,0)$ for all $n$ in $\mathbb{N}$.
\end{proof}


\subsubsection{Existence of a solution to the RMF \emph{ansatz}} \label{sec:relaxsol}


The proof of \Cref{th:relaxingNeuron} mirrors the argument of the proof of \Cref{th:counting},
except that one has to check that $i)$ the smooth solutions of the RMF \emph{ansatz}
are indeed MGFs and $ii)$ that the self-consistency equations for the mean neuronal intensities admit at least one solution.

\begin{proof}[Proof of \Cref{th:relaxingNeuron}]
$i)$ \emph{Necessary conditions on the mean intensities.}
Given positive mean intensities $\beta_j>0$, $1 \leq j \leq K$, each equation
of the system \eqref{eq:sysLaplace} can be written under the same form as
 $\eqref{eq:solAnalytical}$ by introducing the functions 
\begin{eqnarray}
f_i(u) = - \frac{ub_i}{\tau_i}  +\sum_{j \neq i}\left( 1-e^{u \mu_{ij}}\right) \beta_j  \quad \mathrm{and} \quad g_i(u)=\beta_i e^{u r_i} \, ,
\end{eqnarray}
which belong to $C^\infty(\mathbb{R})$ with $f_i(-\tau_i)>0$.
Thus, by Proposition \ref{prop:analysis1}, each equation of the system \eqref{eq:sysLaplace}
admits the unique continuous solution on $\mathbb{R}$

\begin{eqnarray}
L_i(u) = \int_{-\tau_i}^u e^{-\int_v^u \frac{f_i(w)}{1+\tau_i/w} \, dw}  \frac{g_i(v)}{1+v/\tau_i} \, dv \, , \quad 1 \leq j \leq K \, ,
\label{eq:usedeq}
\end{eqnarray}
which also belong to $C^\infty(\mathbb{R})$.
Moreover, the functions $f_i$ and $g_i$ are such that for all $u<0$, $f_i(u) > 0$,
$g_i(u)>0$, $f_i^{(n)}(u)< 0$ and $g_i^{(n)}(u)>0$ if $\beta_j>0$ for $1 \leq j \leq K$.
Thus, by Proposition \ref{prop:analysis2}, we deduce that the functions $L_i$, $1 \leq j \leq K$,
have strictly positive derivative at all order in $(-\infty, 0)$.
Together, the above properties state that the functions defined by
$u \mapsto L_i(-u)$ are completely monotone function on $(0, \infty)$ \cite{Feller2:1971}.
By Bernstein's theorem on completely monotone functions, $u \mapsto L_i(-u)$
is the Laplace transform of a positive measure $m_i$ defined on the Borel sets of $\mathbb{R}_+$, that is:
\begin{eqnarray}
L_i(-u) = \int_0^\infty e^{-ut} dm_i(t) \, .
\end{eqnarray}
In particular, the functions $L_i$ are MGFs if and only if the measures $m_i$ are probability measure.
This is equivalent to imposing that $L_i(0)=1$, $1\leq i \leq K$,
which gives the announced system of equations \eqref{eq:sysMI} for the mean intensities $\beta_j$.
Operating the change of variables $y=\tau_i \ln{\left(1+v/\tau_i\right)}$ and
$x=\tau_i \ln{\left(1+w/\tau_i\right)}$ yields the integral expression
 \begin{eqnarray}
\lefteqn{
L_i(u) =  \beta_i \times } \nonumber\\
&& \hspace{25pt}  \int_{-\infty}^u \exp{\left(\int_y^u b_i\left(e^{\frac{x}{\tau_i}} -1 \right)  +\sum_{j \neq i} \left( e^{\tau_i \mu_{ij}\left(e^{\frac{x}{\tau_i}} -1 \right)}-1\right) \beta_j \, dx \right)}  e^{\tau_i r_i \left(e^{\frac{y}{\tau_i}} -1 \right)}\, dy\, , 
\end{eqnarray}
which reduces to  \eqref{eq:solMGF} after evaluating the integral exponent,
therefore justifying the announced system of equations \eqref{eq:sysMI} for the mean intensities $\beta_j$.

$ii)$ \emph{Existence of mean intensities solutions.}
In order to show the existence of solutions to the system of equations \eqref{eq:sysMI},
let us consider the map $\bm{F}:\mathbbm{R}_+^K \rightarrow \mathbbm{R}_+^K$ whose components are defined by
\begin{eqnarray}
\hspace{20pt} F_i(\bm{\beta}) = \left(\int_{-\infty}^0 \exp{\left( \left[h_i(x)-\sum_{j \neq i} \beta_j h_{ij}(x) \right]^0_v+l_i(v)\right)}   \, dv \right)^{-1}  , \:\: 1 \leq i\leq K \, .
\end{eqnarray}
Given $\bm{\beta}_0$ in the positive orthant, iterating the map $\bm{F}$ specifies a sequence 
$\lbrace \bm{\beta}_n \rbrace_{n \in \mathbb{N}}$, $\bm{\beta}_{n} = \bm{F}^n(\bm{\beta}_0)$,
whose finite accumulation points are solutions to \eqref{eq:sysMI}.
To establish that such accumulation points exist, it is enough to show that the positive sequence
$\lbrace \bm{\beta}_n \rbrace_{n \in \mathbb{N}}$ is bounded.
Given $\bm{\beta}_0$ in the positive orthant, we show the boundedness of
$\lbrace \bm{\beta}_n \rbrace_{n \in \mathbb{N}}$ by exhibiting a dominating convergent sequence.
The first step is to observe that for $t \leq 0$, we have:
\begin{eqnarray}
h_i(0)-h_i(x)+l_i(x) = \tau_i(r_i-b_i)\left(e^{x/\tau_i}-1\right)+b_i x \geq \max(b_i,r_i) x \, ,
\end{eqnarray}
and consequently, we have
\begin{eqnarray}
F_i(\bm{\beta}) \leq   \left(\int_{-\infty}^0 \exp{\left(\max(b_i,r_i) v-\sum_{j \neq i} \beta_j \Big[ h_{ij}(x) \Big]_v^0 \right)}   \, dv \right)^{-1} \stackrel{\rm def}{=} \tilde{F}_i(\bm{\beta}) \, .
\end{eqnarray}
Because of the convexity of the exponential function, the newly introduced function $\tilde{F}_i$
turns out to be an increasing function of the relaxation time $\tau_i$, so that we have
\begin{eqnarray}
\hspace{15pt} F_i(\bm{\beta}) \leq \lim_{\tau_i \to \infty} \tilde{F}_i(\bm{\beta} )=  \left(\int_{-\infty}^0 \exp{\left( r_i t +\sum_{j \neq i} \beta_j \left( \frac{1-e^{t \mu_{ij}}}{\mu_{ij}} + t\right) \right)}   \, dt \right)^{-1} \, .
\end{eqnarray}
Observing that $\lim_{\tau_i \to \infty} \tilde{F}_i(\bm{\beta} )$ 
is also an increasing function of the parameters $\mu_{ij}$ and $r_i$, we further have
\begin{eqnarray}
F_i(\bm{\beta})  \leq \left(\int_{-\infty}^0 \exp{\left( r t + \left( \frac{1-e^{t \mu}}{\mu} + t\right)\sum_{j \neq i} \beta_j \right)}   \, dt \right)^{-1} \stackrel{\rm def}{=} G_i(\bm{\beta} )  \, , 
\end{eqnarray}
where $r=\max_i r_i$ and $\mu = \max_{i,j} \mu_{ij}$.
As expected, evaluating the integral in the above expression for
$\bm{\beta}=\beta\bm{1}$ yields the equation associated to the counting-neuron model
with interaction weight $\mu$ and base intensity equal to the reset value $r$:
\begin{eqnarray}
G_i(\beta \bm{1})= \frac{\mu c^{c+x}e^{-c}}{\gamma(c+x,c)} \stackrel{\rm def}{=} g(\beta)   \, , \quad \mathrm{with} \quad c=\frac{(K-1) \beta}{\mu} \quad \mathrm{and} \quad x=\frac{r}{\mu} \, .
\end{eqnarray}
Given $\bm{\beta}_0$ in the positive orthant, posit 
$\bm{\beta}_0' = (\max_i \beta_{0,i}) \bm{1}$ and consider the two sequences
$\lbrace \bm{\beta}_n \rbrace_{n \in \mathbb{N}}$ and $\lbrace \bm{\beta}_n '\rbrace_{n \in \mathbb{N}}$
obtained by iterating the maps $\bm{F}$ and $\bm{G}$ on $\bm{\beta}_0$ and $\bm{\beta}_0'$, respectively:
$\bm{\beta}_{n} = \bm{F}^n(\bm{\beta}_0)$ and $\bm{\beta}_{n} '= \bm{G}^n(\bm{\beta}_0')$.
If $\bm{\beta}_n \leq \bm{\beta}_n'$, then
$\bm{\beta}_{n+1}=\bm{F}(\bm{\beta}_n) \leq \bm{F}(\bm{\beta}_n') \leq \bm{G}(\bm{\beta}_n')  = \bm{\beta}_{n+1}'$, 
where we have used the fact that for all $1 \leq i \leq K$, $F_i$ is increasing with respect to $\beta_j$, $1 \leq j \leq K$:
\begin{eqnarray}
\partial_{\beta_j} F_i(\bm{\beta}) = -\frac{\int_{-\infty}^0 h_{ij}(t) e^{\left(h_i(t)+\sum_{j \neq i} \beta_j h_{ij}(t) \right)}   \, dt}{F_i(\bm{\beta})^2} \geq 0 \, .
\end{eqnarray}
Thus, as $\bm{\beta}_0 \leq \bm{\beta}'_0$ by construction,
the sequence $\lbrace \bm{\beta}_n' \rbrace_{n \in \mathbb{N}}$ dominates
$\lbrace \bm{\beta}_n \rbrace_{n \in \mathbb{N}}$ with respect to the product order in $\mathbb{R}_K$.
It remains to show to $\lbrace \bm{\beta}_n' \rbrace_{n \in \mathbb{N}}$ is convergent,
which is equivalent to show that the one dimensional sequence
$\lbrace \beta'_n \rbrace_{n \in \mathbb{N}}$, $\beta_n'= g^n(\max_i \beta_{0,i})$, is convergent.
To justify this point, it is enough to check that the sequence
$\lbrace \beta'_n \rbrace_{n \in \mathbb{N}}$ is bounded, as Lemma \ref{lm:counting}
shows that there is a unique fixed point solution to $\beta= g(\beta) =  \mu c^ae^{-c} / \gamma(a,c)$.
Introducing the rescaled sequence $\lbrace c_n \rbrace_{n \in \mathbb{N}}$ defined by
$c_n = (K-1)\beta'_n/\mu$, notice that $c_{n+1} =h(c_n)$ with
\begin{eqnarray}
h(c) = (K-1)\frac{ c^{c+x}e^{-c}}{\gamma(c+x,c)}  \, .
\end{eqnarray}
From the power expansion of the incomplete gamma function, we have
\begin{eqnarray}
h(c) &=&  (K-1)\left( \sum_{n \geq 0} \frac{c^n}{(x+c)(x+c+1)\ldots (x+c+n)}\right)^{-1} \, \nonumber\\
 &\leq&  (K-1)\left( \sum_{n = 0}^{K-1} \frac{c^n}{(x+c)(x+c+1)\ldots (x+c+n)}\right)^{-1}  = \frac{K-1}{K} c + o_\infty(c)\, .
\end{eqnarray}
showing that $h(c)<c$ for large enough $c$.
This implies that  $\lbrace c_n \rbrace_{n \in \mathbb{N}}$ is a bounded sequence,
and so is $\lbrace \beta'_n \rbrace_{n \in \mathbb{N}}$.
\end{proof}


\section{Future directions}\label{sec:future}

Our results were obtained and discussed for
purely excitatory LGL networks and limited to first-order RMF \emph{ansatz}.
We would like to stress that, in principle, our approach to reduce RMF \emph{ans\"atze}
to a set of self-consistency equations---founded on imposing the condition of analyticity on the solutions to
the \emph{ans\"atze}---can be generalized to models including inhibition and higher-order statistics. 

In the context of second-order RMF, the RCP can be applied to the joint 
MGF of pairs of neurons rather than single neurons. Our replica framework can be extended to simplify the representation of the point processes that feed
this pair through some appropriate extension of the Poisson Hypothesis. The interactions
between the two neurons of the pair are however described in an exact way.
An important complication of our replica approach for higher order is that  the RMF \emph{ansatz} consists in a system of PDEs rather than a system of ODEs.
However, the PDEs associated with the RCP for second-order RMF model can be solved using 
singularity-analysis techniques generalizing those described in this work.
This line of thought is essential to represent, e.g., the wave phenomena present in cyclic networks, which limits the applicability of first-order RMF networks.
Second-order RMF networks are expected to bring essential new features absent from order one.
They are most probably the least complex networks within the RMF class allowing
one to capture correlation effects. They also seem to provide the 
least complex networks that are not fundamentally time irreversible, i.e., with a positive production of entropy.

Another important extension is to account for networks supporting both excitatory and inhibitory interactions within our RMF framework. 
Including inhibitory interactions within a point-process framework requires to consider nonlinear models of synaptic integration, whereby stochastic intensities can remain non-negative in spite of inhibitory inputs.
There are several possible nonlinear models which are biophysically relevant, each yielding distinct functional characterizations of their RMF stationary state.
Considering these nonlinear RMF networks in toy models shows that singularity-analysis techniques are still applicable to networks with mixed excitation and inhibition.
However, the presence of inhibition fundamentally alters the nature of the singularity featuring in the non-physical solutions to the RMF \emph{ansatz}.
Generalizing our analysis to singularities that are more involved than infinite discontinuities is the key challenge to include inhibition within our framework.
Importantly, we have numerical evidence that networks with inhibition have RMF versions that admit several stable solutions.
We intend to utilize these multistable RMF networks to probe the metastable behavior of the finite-size networks that share the same neural basic structure.

The above computational questions will be explored in companion papers.
A more fundamental question remains to prove the propagation of chaos in finite-replica models, which is supported by simulations and is the central conjecture of this work.

\section*{Acknowledgments} T.T. was supported by the Alfred P. Sloan Research Fellowship FG-2017-9554.
 F.B. was supported by an award from the Simons Foundation (\#197982). Both awards are
to the University of Texas at Austin.

\bibliographystyle{siamplain}
\bibliography{siam_bib}
\end{document}